\documentclass[12pt]{amsart}
\usepackage[margin=1in]{geometry}
\usepackage{fancyhdr}
\usepackage{cite}
\usepackage{amsmath,amsthm,amsfonts,amssymb,bm}
\usepackage{xcolor}
\colorlet{RED}{red}
\usepackage{setspace}
\usepackage{hyperref}
\newtheorem{thm}{Theorem}[section]
\newtheorem{lem}[thm]{Lemma}
\newtheorem{prop}[thm]{Proposition}

\theoremstyle{definition}

\theoremstyle{remark}
\newtheorem{rem}[thm]{Remark}

\numberwithin{equation}{section}

\makeatletter
\def\tagform@#1{\maketag@@@{\ignorespaces#1\unskip\@@italiccorr}}
\let\orgtheequation\theequation
\def\theequation{(\orgtheequation)}
\makeatother

\allowdisplaybreaks
\begin{document}
	
	\title{Global Stability of 3D Compressible Non-Resistive MHD: Hidden Damping and Rational Background Fields}
	\author{Lin-An Li$^1$, Qian Li$^2$, Jiahong Wu$^3$, Xiaojing Xu$^1$}
	\address{$^1$School of Mathematical Sciences, Beijing Normal University and Laboratory of Mathematics and Complex Systems, Ministry of Education, Beijing 100875, People's Republic of China}
	\email{linanli@amss.ac.cn}
	
	\address{$^2$School of Mathematics, China University of Mining and Technology, Xuzhou 221116, China.}	
	\email{li-qian@mail.bnu.edu.cn}
	
	\address{$^3$Department of Mathematics, University of Notre Dame, Notre Dame, IN 46556, USA}	
	\email{jwu29@nd.edu}
	
	\address{$^1$School of Mathematical Sciences, Beijing Normal University and Laboratory of Mathematics and Complex Systems, Ministry of Education, Beijing 100875, P.R. China}	
	\email{xjxu@bnu.edu.cn}

	\begin{abstract}
		We prove the global well-posedness and nonlinear stability of classical solutions to the three-dimensional compressible viscous, non-resistive MHD system on $\mathbb{T}^3$ near the equilibrium $(1,\mathbf{0},\mathbf{e}_3)$, for small $x_3$-symmetric perturbations and without any Diophantine condition on the background magnetic field. The central obstruction is the $x_3$-independent sector, in which the density and the magnetic field possess no dissipation, no damping, and no decay. We overcome it by exhibiting a hidden wave structure for the exact total pressure $\mathcal{D}=P(1+a)+B_3+\frac12|\mathbf{B}|^2$, the averaged pair $(\overline{\mathbf{u}},\overline{\mathcal{D}})$ obeys a closed system, and $\overline{\mathcal{D}}$ satisfies a strongly damped wave equation whose principal wave part propagates at the fast magnetosonic speed and whose non-parabolic branch damps at a rate that involves no gain of derivatives. This hidden damping substitutes for the missing magnetic dissipation, and simultaneously absorbs the magnetic pressure $\frac12\nabla|\mathbf{B}|^2$, the main obstruction created by compressibility. Together with a damped wave structure for the oscillatory sector and space-time weighted energy functionals with shifted time weights, this yields global existence, uniform stability, and explicit polynomial decay rates.
	\end{abstract}
	\maketitle
	
	{\bf Key words:}
	Compressible MHD system; Nonlinear stability; Partial dissipation; Hidden damping; Total pressure.
	
	{\bf MSC 2020:}  35A01, 35B35, 35B40, 35Q35, 76E25, 76W05.
	
	\section{Introduction}
	
	\subsection{The system and the equilibrium}
	The viscous magnetohydrodynamic (MHD) equations govern the motion of an electrically conducting fluid interacting with a magnetic field, and are central to the modeling of astrophysical plasmas, space physics, and controlled fusion devices. In many strongly collisional plasmas the viscosity is significant while the magnetic diffusivity is comparatively weak; neglecting the latter idealizes the fluid as a perfect conductor \cite{Cabannes1970,Landu60} and leads to the compressible viscous, non-resistive system
	\begin{equation}\label{equ cmhd0}
		\left\{\begin{aligned}
			&\partial_t\rho+\operatorname{div}(\rho\mathbf{u})=0,\qquad t>0,\ x\in \mathbb{T}^3,\\
			&\rho\partial_t\mathbf{u}+ \rho\mathbf{u}\cdot\nabla  \mathbf{u}+\nabla P -\mu\Delta \mathbf{u}-(\lambda+\mu)\nabla\operatorname{div}\mathbf{u}=(\nabla \times \mathbf{b})\times \mathbf{b},\\
			&\partial_t\mathbf{b}+\mathbf{u}\cdot \nabla \mathbf{b}-\mathbf{b}\cdot \nabla \mathbf{u}+\mathbf{b}\operatorname{div}\mathbf{u}=0,\\
			&\nabla \cdot\mathbf{b}=0,
		\end{aligned}\right.
	\end{equation}
	posed on the torus $\mathbb{T}^3=[-\frac12,\frac12]^3$. Here $\rho$, $\mathbf{u}$ and $\mathbf{b}$ denote the density, the velocity and the magnetic field, and the viscosity coefficients satisfy the physical constraints
	\begin{align*}
		\mu > 0,\qquad 2\mu + 3\lambda \geq 0.
	\end{align*}
	The pressure $P = P(\rho)$ is a smooth function of the density satisfying
	\begin{align*}
		P'(\rho) > 0\quad \text{and}\quad P'(\rho_a) =1,
	\end{align*}
	where $\rho_a=\int_{\mathbb{T}^3}\rho \,dx$ denotes the mean density. Since $\rho_a$ is conserved in time, we set $\rho_a=1$ without loss of generality.
	
	The mathematical interest of \eqref{equ cmhd0} lies in the absence of magnetic diffusion. The equation for $\mathbf{b}$ is purely transport-stretching. It has no regularizing mechanism of its own, and it is not known whether smooth solutions of the non-resistive system remain regular for general data. Stability near a nontrivial equilibrium is therefore governed by a subtle question: how much of the velocity's dissipation can be transferred to the magnetic field through the linear coupling with a background field, and in which directions.
	
	We study perturbations of the constant state $(\rho,\mathbf{u},\mathbf{b})=(1,\mathbf{0},\mathbf{e}_3)$ with $\mathbf{e}_3=(0,0,1)^{\top}$. Writing $(a,\mathbf{u},\mathbf{B})=(\rho-1,\mathbf{u},\mathbf{b}-\mathbf{e}_3)$, the system becomes
	\begin{equation}\label{equ pcmhd}
		\left\{\begin{aligned}
			&\partial_t a +\mathbf{u}\cdot \nabla a =-\operatorname{div} { \mathbf{u}}-a\operatorname{div}\mathbf{u}, \qquad t>0,\ x\in \mathbb{T}^3,\\
			&\rho \partial_t\mathbf{u}+\rho \mathbf{u}\cdot \nabla \mathbf{u}-\mu\Delta \mathbf{u}-(\lambda+\mu)\nabla \operatorname{div}\mathbf{u}+\nabla P\\
			&\hspace{4cm}=\partial_{x_3}\mathbf{B}-\nabla B_3+\mathbf{B}\cdot\nabla \mathbf{B}- \frac12\nabla|\mathbf{B}|^2,\\
			&\partial_t\mathbf{B}+\mathbf{u}\cdot \nabla \mathbf{B}=\partial_{x_3} \mathbf{u}-\mathbf{e}_3\operatorname{div} \mathbf{u}+\mathbf{B}\cdot \nabla \mathbf{u}-\mathbf{B}\operatorname{div}\mathbf{u},\\
			&\operatorname{div}\mathbf{B}=0,
		\end{aligned}\right.
	\end{equation}
	with initial data $(a,\mathbf{u},\mathbf{B})|_{t=0}=(a_0,\mathbf{u}_0,\mathbf{B}_0)$. The linear coupling between $\mathbf{u}$ and $\mathbf{B}$ is carried by the single derivative $\partial_{x_3}$, and this is the entire source of magnetic stabilization available to us.
	
	Following Pan, Zhou and Zhu \cite{PanZhouZhu18}, we impose an $x_3$-symmetry on the data. Denoting $x_h=(x_1,x_2)$, $\mathbf{u}_h=(u_1,u_2)^{\top}$ and $\mathbf{B}_h=(B_1,B_2)^{\top}$, we assume
	\begin{equation}\label{as symmetry}
		\begin{aligned}
			&  a_0(x_h, x_3),\ \mathbf{u}_{0,h}(x_h, x_3) \text{ and } B_{0,3}(x_h, x_3) \text{ are even and periodic in } x_3, \\
			&  u_{0,3}(x_h, x_3) \text{ and } \mathbf{B}_{0,h}(x_h, x_3) \text{ are odd and periodic in } x_3,
		\end{aligned}
	\end{equation}
	together with the zero-mean conditions
	\begin{equation}\label{as zero mean}
		\int_{\mathbb{T}^3} a_0 \, dx = 0, \quad \int_{\mathbb{T}^3} \rho_0\mathbf{u}_0 \, dx = 0, \quad \int_{\mathbb{T}^3} \mathbf{B}_0 \, dx = 0.
	\end{equation}
	Both \eqref{as symmetry} and \eqref{as zero mean} are propagated by the flow, by uniqueness of classical solutions. Their role is to force $\mathbf{B}_h(x_h,\cdot)$ and $u_3(x_h,\cdot)$ to have vanishing $x_3$-average, so that these two quantities coincide with their own oscillatory parts. The anisotropic Poincar\'e inequality $\|f\|_{L^2}\lesssim\|\partial_{x_3}f\|_{L^2}$ then applies to $\mathbf{B}_h$ and $u_3$ themselves, and not merely to their oscillations.
	
	\subsection{Main result}
	\begin{thm}\label{thm1}
		Let $s>7$ be an integer and $\sigma\in (0,1/2)$. Suppose that the initial data $(a_0,\mathbf{u}_0,\mathbf{B}_0)\in H^s(\mathbb{T}^3)$ satisfies $\nabla\cdot \mathbf{B}_0=0$ together with \eqref{as symmetry} and \eqref{as zero mean}. There exists $\varepsilon>0$ such that if
		\[
		\|(a_0,\mathbf{u}_0,\mathbf{B}_0) \|_{H^{s}}\leq \varepsilon,
		\]
		then \eqref{equ pcmhd} admits a unique global classical solution $(a,\mathbf{u},\mathbf{B})$. Moreover, there is $C>0$ such that for all $t>0$,
		\[
		\|(a,\mathbf{u},\mathbf{B}) (t)\|_{H^{s-1}}\leq C\varepsilon,
		\]
		and, for $i=1,2,3$,
		\[
		\|(\widetilde{a},\mathbf{u},\widetilde{\mathbf{B}})(t) \|_{H^{s-i}}\leq C\varepsilon (1+t)^{\frac{\sigma-i}{2}},
		\]
		where $\widetilde{f}=f-\int_{\mathbb{T}}f\,dx_3$ denotes the oscillatory part of $f$ in the $x_3$-variable.
	\end{thm}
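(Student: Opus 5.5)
The proof is a bootstrap argument: local well-posedness in $H^s$ plus a priori estimates that close under a smallness hypothesis. We split every quantity into its $x_3$-average $\overline f=\int_{\mathbb T}f\,dx_3$ and its oscillatory part $\widetilde f=f-\overline f$. By \eqref{as symmetry}–\eqref{as zero mean}, which are propagated by uniqueness, we have $\overline{\mathbf B_h}=0$ and $\overline{u_3}=0$. The anisotropic Poincaré inequality $\|\widetilde f\|_{L^2}\lesssim\|\partial_{x_3}\widetilde f\|_{L^2}$ therefore applies to $\widetilde a,\widetilde{\mathbf B},\mathbf B_h,u_3$. The bootstrap quantity is
\[
\mathcal E(t)=\sup_{\tau\le t}\|(a,\mathbf u,\mathbf B)(\tau)\|_{H^{s-1}}^2+\int_0^t\|\nabla\mathbf u\|_{H^{s-1}}^2\,d\tau+\sum_{i=1}^3\sup_{\tau\le t}(1+\tau)^{i-\sigma}\|(\widetilde a,\mathbf u,\widetilde{\mathbf B})\|_{H^{s-i}}^2+\cdots,
\]
where the dots stand for the matching time-weighted dissipation integrals. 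These integrals use the shifted weights $(1+\tau)^{i-\sigma}$ with $\sigma\in(0,1/2)$, so that $\int(1+\tau)^{-1-\sigma'}$-type factors are integrable. We also carry a top-order energy $\|(a,\mathbf u,\mathbf B)\|_{H^s}^2$, which is allowed to grow slowly. The goal is $\mathcal E(t)\lesssim\varepsilon^2+\mathcal E(t)^{3/2}$. The loss of one derivative from $H^s$ to $H^{s-1}$ is the price for commuting $\partial^\alpha$ through the transport terms $\mathbf u\cdot\nabla a$ and $\mathbf u\cdot\nabla\mathbf B$, since these have no smoothing.

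\emph{Oscillatory sector.} For $\widetilde{\,\cdot\,}$ we use the damped wave structure obtained from the linearization of \eqref{equ pcmhd}. Differentiating the $\mathbf u$-equation in time and substituting the $a$ and $\mathbf B$ equations gives
\[
\partial_{tt}\mathbf u-\mu\Delta\partial_t\mathbf u-(\lambda+\mu)\nabla\operatorname{div}\partial_t\mathbf u-\partial_{x_3}^2\mathbf u-\nabla\operatorname{div}\mathbf u+(\text{magnetosonic coupling})=N.
\]
We test against $\partial_t\mathbf u$ and against $\mathbf u$, and add the cross terms $\langle\partial^\alpha\mathbf u,-\partial_{x_3}\partial^\alpha\mathbf B\rangle$ and $\langle\partial^\alpha\mathbf u,\nabla\partial^\alpha a\rangle$ with small coefficients. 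This yields dissipation of $\|\partial_{x_3}\widetilde{\mathbf B}\|$ and $\|\nabla\widetilde a\|$ at one derivative lower. Poincaré in $x_3$ turns this into a coercive dissipation $\gtrsim$ energy for the oscillatory part, with a loss of one derivative per level. Interpolating between levels $H^{s-i}$ then gives the decay rates $(1+t)^{(\sigma-i)/2}$, and multiplying by $(1+t)^{i-\sigma}$ and integrating closes the weighted estimates level by level.

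\emph{Averaged sector (main obstacle).} Here $\overline a$ and $\overline{B_3}$ have no damping at all, and the nonlinear magnetic pressure $\frac12\nabla|\mathbf B|^2$ feeds into $\overline{\mathbf u}$ without decay. We introduce $\mathcal D=P(1+a)+B_3+\frac12|\mathbf B|^2$ and rewrite the momentum equation as $\rho\partial_t\mathbf u-\mu\Delta\mathbf u-(\lambda+\mu)\nabla\operatorname{div}\mathbf u+\nabla\mathcal D=\partial_{x_3}\mathbf B+\mathbf B\cdot\nabla\mathbf B-\rho\mathbf u\cdot\nabla\mathbf u$. Averaging in $x_3$ kills $\partial_{x_3}\mathbf B$. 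Computing $\partial_t\mathcal D$ from the $a$ and $\mathbf B$ equations gives $\partial_t\overline{\mathcal D}+2\operatorname{div}_h\overline{\mathbf u}_h=$ quadratic terms, where $2=P'+1$ is the fast speed squared. Together these form a closed system for $(\overline{\mathbf u}_h,\overline{\mathcal D})$, and $\overline{\mathcal D}$ satisfies
\[
\partial_{tt}\overline{\mathcal D}-(2\mu+\lambda)\Delta_h\partial_t\overline{\mathcal D}-2\Delta_h\overline{\mathcal D}=\overline N.
\]
This is a strongly damped wave equation. Its non-parabolic Fourier branch has eigenvalue $\approx-2/(2\mu+\lambda)$, so it gives damping with no derivative gain. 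We estimate via the energy $\|\partial_t\overline{\mathcal D}\|^2+2\|\nabla_h\overline{\mathcal D}\|^2$ plus a cross term $\langle\partial_t\overline{\mathcal D},\overline{\mathcal D}\rangle$, which yields time-integrability of $\|\nabla_h\overline{\mathcal D}\|_{H^{s-2}}^2$. The key bookkeeping is that every quadratic term involving $\overline a$ or $\overline{B_3}$ either appears through $\overline{\mathcal D}$, which is now dissipated, or is multiplied by $\nabla\mathbf u$ or an oscillatory factor, which is integrable in time.

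The hardest point is closing the top-order transport estimate for $\overline a$ and $\overline B_3$ themselves. To do this, I would first try to use the effective-viscous-flux identity $(2\mu+\lambda)\operatorname{div}\mathbf u-\mathcal D=$ (time derivative plus decaying terms) to trade $\operatorname{div}\mathbf u$ for $\mathcal D$ in the commutator terms.

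Finally, the nonlinear terms are bounded by products of one $L^\infty$ factor controlled by the decaying norms (Sobolev embedding, using $s>7$) and one factor from the energy. Their time integrals converge because the decay rate $(1+t)^{-(3-\sigma)/2}$ is integrable. This closes the bootstrap, and continuation then gives global existence, uniform stability and the stated decay.
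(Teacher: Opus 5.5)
Your architecture matches the paper's in its main lines: the average/oscillation split, the symmetry giving $\overline{\mathbf B_h}=\overline{u_3}=0$, the total pressure $\mathcal D$ with the closed damped system for $(\overline{\mathbf u},\overline{\mathcal D})$, a slowly growing top-order energy, and decay by interpolation across the levels $H^{s-i}$. The step that actually closes the oscillatory estimates is missing, however, and the claim in your last paragraph fails exactly there.

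To bound $(1+t)^{1-\sigma}\|\widetilde{\mathbf B}\|_{H^{s-1}}^2$ you need an energy estimate for $\nabla^{s-1}\widetilde{\mathbf B}$. In it, $\mathbf B\cdot\nabla\mathbf u-\mathbf B\operatorname{div}\mathbf u$ produces $\overline{\mathbf B}\cdot\nabla\nabla^{s-1}\widetilde{\mathbf u}\cdot\nabla^{s-1}\widetilde{\mathbf B}$ and $\overline{\mathbf B}\,\nabla^{s-1}\operatorname{div}\widetilde{\mathbf u}\cdot\nabla^{s-1}\widetilde{\mathbf B}$. These are not a decaying $L^\infty$ factor times an energy factor:
\begin{itemize}
\item $\overline{\mathbf B}$ does not decay;
\item $\widetilde{\mathbf B}$ is damped only one derivative lower, through $\partial_{x_3}\mathbf B\in H^{s-2}$;
\item interpolation with the growing $H^s$ norm only gives $\int(1+\tau)^{-\sigma}\|\widetilde{\mathbf B}\|_{H^{s-1}}^2\,d\tau$.
\end{itemize}
So these terms must cancel exactly against the tension and magnetic-pressure terms of the $\widetilde{\mathbf u}$ equation. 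There, however, they carry $\rho^{-1}$, and since $\overline a$ does not decay, a naive pairing leaves $\frac{\overline a}{1+\overline a}\overline{\mathbf B}\cdot\nabla\nabla^{s-1}\widetilde{\mathbf B}\cdot\nabla^{s-1}\widetilde{\mathbf u}$ with the same derivative loss. The missing idea is the paper's density-weighted energies $\int\frac{1}{1+\overline a}|\nabla^{s-i}\widetilde{\mathbf B}|^2$ and $\int\frac{P'(1+\overline a)}{(1+\overline a)^2}|\nabla^{s-i}\widetilde a|^2$. These make the magnetic and the pressure couplings cancel exactly.

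Relatedly, your cross term $\langle\partial^\alpha\mathbf u,\nabla\partial^\alpha a\rangle$ produces the unsigned term $\langle\nabla\partial^\alpha B_3,\nabla\partial^\alpha a\rangle$, and $\nabla_h\widetilde B_3$ is not controlled by $\partial_{x_3}\mathbf B$ at the same level. You must work with $d=a+B_3$, whose pairing with $\partial_{x_3}\mathbf B$ vanishes because $\operatorname{div}\mathbf B=0$. Then recover $\widetilde a$ from $\|\widetilde a\|_{H^k}\lesssim\|\widetilde d\|_{H^k}+\|\partial_{x_3}\mathbf B\|_{H^k}$.

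Second, you flag the uniform $H^{s-1}$ bound on $(\overline a,\overline{\mathbf B})$ as the hardest point but leave it open, and the effective-viscous-flux substitution does not settle it. Replacing $\operatorname{div}\overline{\mathbf u}$ by $\nu^{-1}\overline{\mathcal D}$ turns the coupling into $\nu^{-1}\int_0^t\langle\nabla^{s-1}\overline{\mathcal D},\nabla^{s-1}\overline a\rangle\,d\tau$. Bounding this needs $\int_0^\infty\|\overline{\mathcal D}\|_{\dot H^{s-1}}\,d\tau<\infty$, whereas the hidden damping only gives $\int(1+\tau)^{1-\sigma}\|\overline{\mathcal D}\|_{\dot H^{s-1}}^2\,d\tau<\infty$ with $1-\sigma<1$.

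The paper instead estimates $\overline a$ and $\overline{\mathbf B}$ directly in $\dot H^{s-1}$. The linear term $\langle\nabla^{s-1}\operatorname{div}\overline{\mathbf u},\nabla^{s-1}\overline a\rangle$ does not cancel there, since $\overline{\mathbf u}$ carries a time weight and is paired with $\overline{\mathcal D}$, not $\overline a$. The paper handles it as follows:
\begin{itemize}
\item integrate by parts to get $\langle\nabla^{s-2}\operatorname{div}\overline{\mathbf u},\nabla^{s}\overline a\rangle$;
\item bound this by pairing the artificial damping $\int(\mathcal T+\tau)^{-1-\sigma}\|(a,\mathbf B)\|_{H^s}^2$, which comes from the negative weight $(\mathcal T+\tau)^{-\sigma}$ on the top-order energy, with $\int(\mathcal T+\tau)^{2-\sigma}\|\nabla\mathbf u\|_{H^{s-2}}^2$;
\item the leftover factor $(\mathcal T+\tau)^{(2\sigma-1)/2}$ is bounded precisely because $\sigma<1/2$, and the shift $\mathcal T$ makes it small.
\end{itemize}
A similar pairing of the growth integral with weighted dissipation controls top-order terms with non-decaying factors, such as $\nabla\overline a\,\nabla^{s-1}\operatorname{div}\mathbf u\,\nabla^s a$. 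This is what forces the negative weight, and hence the slow growth, at top order. Your closing appeal to the integrability of $(1+t)^{-(3-\sigma)/2}$ does not cover these terms.
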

	
	This result has three important features.
	
	First, the background field is the rational vector $\mathbf{e}_3$. Apart from the incompressible work of Pan, Zhou and Zhu \cite{PanZhouZhu18} discussed below, every global stability result for three-dimensional MHD on a periodic domain with only one dissipation, in either the incompressible or the compressible regime, requires the background field $\mathbf{n}$ to obey a Diophantine condition
	\begin{align}\label{Dio con}
		|\mathbf{n}\cdot \mathbf{k}|\geq  c|\mathbf{k}|^{-r},\qquad \forall\, \mathbf{k}\in\mathbb{Z}^3\setminus\{\mathbf{0}\},
	\end{align}
	which excludes $\mathbf{e}_3$ and, more generally, every rational direction. To the best of our knowledge, Theorem \ref{thm1} is the first stability result for the three-dimensional compressible non-resistive MHD system with a rational background field.
	
	Second, the decay is anisotropic and, in a precise sense, optimal for the structure at hand. The velocity decays in full, and so does the oscillatory part of $(a,\mathbf{B})$; the averages $(\overline{a},\overline{\mathbf{B}})$ are merely bounded uniformly in time. This is not a defect of the method: the $x_3$-independent sector of the linearized system supports steady states, and no decay for $(\overline{a},\overline{\mathbf{B}})$ can be expected.
	
	Third, the loss of one derivative in the uniform bound ($H^{s-1}$ rather than $H^{s}$) is genuine and is compensated by a slowly growing top-order bound $\|(a,\mathbf{u},\mathbf{B})(t)\|_{H^{s}}\leq C\varepsilon(1+t)^{\sigma/2}$; see Remark \ref{rem hs}.
	
	\subsection{Background}
	For the two-dimensional compressible non-resistive system in $\mathbb{R}^2$, Wu and Wu \cite{WuWu17} obtained global small solutions with explicit decay rates, under assumptions later relaxed by Zhu \cite{Zhu26}; in the periodic setting Wu and Zhu \cite{WuZhu22} exploited the weak smoothing induced by the background field $\mathbf{e}_2$, and Dong, Wu and Zhai \cite{DongWuZhai23} treated the $2\frac12$-dimensional case. None of these requires symmetry or Diophantine hypotheses. The two-dimensional geometry is genuinely more forgiving: a single background direction already controls a codimension-one set of frequencies.
	
	In three dimensions the situation changes. The stabilizing operator is $\mathbf{n}\cdot\nabla$, and unless $\mathbf{n}$ is Diophantine there are arbitrarily high frequencies $\mathbf{k}$ along which $\mathbf{n}\cdot\mathbf{k}$ is small or zero, so the magnetic field is not controlled by $\mathbf{n}\cdot\nabla\mathbf{B}$ in any Sobolev norm. Under \eqref{Dio con} one recovers
	\begin{align}\label{Dio poincare}
		\|\mathbf{B}\|_{H^k}\lesssim   \|\mathbf{n}\cdot \nabla \mathbf{B}\|_{H^{k+r}},
	\end{align}
	at the price of $r$ derivatives, and this has been the engine of the subject: Chen, Zhang and Zhou \cite{ChenZhangZhou22} for the incompressible system with either viscosity or resistivity alone, Wu and Zhai \cite{WuZhai23} for the compressible non-resistive system, Li and Qiao \cite{LiQiao25} for the inviscid resistive isentropic system, and Wu, Xu and Zhai \cite{WuXuZhai25} in the non-isentropic inviscid regime, where a stabilizing wave structure among the fluid, the temperature and the magnetic field is uncovered. The single exception is the incompressible work of Pan, Zhou and Zhu \cite{PanZhouZhu18}, which replaces \eqref{Dio con} by the symmetry \eqref{as symmetry}. The present paper carries that program into the compressible regime, where, as we now explain, the mechanism of \cite{PanZhouZhu18} breaks down and must be replaced.
	
		We refer the reader to \cite{HuWang10,LiXuZhang13,WuZhangZou21,ChenWang02,Kawashima84,HongGY117,TanZ18,JiangF19,JiangF19Parker,WangXin22,HuXP14}  for further results on compressible MHD, and to \cite{ZhouZhu18,WeiZhang20,WuZhu21,XiaoXinWu09,AbidiZhang17,LiTanYin17,Chemin16,Fefferman14,Fefferman17,LinXu15,RenWu14,Wang19,XuZhang15,CaiHan26,LinZhang14} for the incompressible system with partial or full dissipation.
	
	\subsection{The degeneracy of the rational background field}
	It is worth making precise what is lost when \eqref{Dio con} is abandoned.
	
	In \cite{WuZhai23}, with $\mathbf{n}$ Diophantine, the effective unknowns are
	\begin{align*}
		d=a+\mathbf{n}\cdot \mathbf{B}, \qquad \mathbf{G}=\mathbb{Q}\mathbf{u}-\frac{1}{\lambda+2\mu}\Delta^{-1}\nabla d,\qquad \mathbb{Q}=\nabla\Delta^{-1}\operatorname{div},
	\end{align*}
	and the triple $(d,\mathbf{G},\mathbf{n}\cdot\nabla\mathbf{B})$ carries a hidden damped wave structure. Combined with \eqref{Dio poincare}, this yields
	\begin{align*}
		\|a\|_{H^k}\lesssim \|d\|_{H^k}+\|\mathbf{n}\cdot \nabla\mathbf{B}\|_{H^{k+r}},
	\end{align*}
	so that damping for the two combined quantities is upgraded to damping for $a$ and $\mathbf{B}$ separately. The Diophantine condition is exactly what performs this upgrade.
	
	For $\mathbf{n}=\mathbf{e}_3$ the upgrade fails, and it fails on a specific sector. Decompose any function into its $x_3$-average and its oscillation,
	\begin{align*}
		\overline{f}(t,x_h)=\int_{\mathbb{T}}f(t,x_h,x_3)\,dx_3,\qquad \widetilde{f}=f-\overline{f},
	\end{align*}
	so that $\overline{f}$ collects the frequencies with $k_3=0$ and $\widetilde{f}$ the rest. On the oscillatory sector the anisotropic Poincar\'e inequality substitutes for \eqref{Dio poincare} at no loss of derivatives, since $\|\widetilde{\mathbf{B}}\|_{H^k}\lesssim\|\partial_{x_3}\mathbf{B}\|_{H^k}$. On the averaged sector, by contrast, $\partial_{x_3}$ annihilates everything, the linear coupling between $\overline{\mathbf{u}}$ and $\overline{\mathbf{B}}$ vanishes identically, and $\overline{a}$ and $\overline{\mathbf{B}}$ inherit no dissipation, no damping, and no decay whatsoever. The combined quantity $d$ is of no help here, because on this sector its damping cannot be decoupled into damping for $\overline{a}$ and $\overline{B_3}$ individually.
	
	Two further obstructions, both absent from \cite{PanZhouZhu18}, are created by compressibility. The magnetic pressure $\frac12\nabla|\mathbf{B}|^2$ can no longer be discarded by the incompressible cancellation, and the density-dependent nonlinearities $I(a)=\frac{a}{1+a}$ multiply the highest-order terms. Since $\overline{a}$ and $\overline{\mathbf{B}}$ do not decay, any estimate that treats these terms perturbatively is doomed: there is no smallness in time to spend.
	
	\subsection{The total pressure \texorpdfstring{$\mathcal{D}$}{D}: a hidden wave structure and a hidden damping}\label{subsec D}
	The central discovery of this paper is that the averaged sector, apparently devoid of any stabilizing mechanism, in fact carries one and that it becomes visible only in the variable
	\begin{align}\label{def D}
		\mathcal{D}=P(1+a)+B_3+\frac12|\mathbf{B}|^2.
	\end{align}
	Linearizing \eqref{def D} around the equilibrium and using $P'(1)=1$ gives $\mathcal{D}=P(1)+a+B_3+O(\text{quadratic})$, so $\mathcal{D}$ is a nonlinear completion of the combined quantity $d=a+\mathbf{n}\cdot\mathbf{B}$ of \cite{WuZhai23} in the case $\mathbf{n}=\mathbf{e}_3$. The point is that the completion is not arbitrary: among all such completions, \eqref{def D} is the one that is simultaneously the exact total pressure, the exact pressure force in the momentum equation, and the variable in which the averaged sector closes. We make these three assertions precise in turn.
	
	\medskip
	\noindent\textbf{(i) $\mathcal{D}$ is the exact total pressure.} Since $\mathbf{b}=\mathbf{e}_3+\mathbf{B}$,
	\begin{align*}
		\frac12|\mathbf{b}|^2=\frac12+B_3+\frac12|\mathbf{B}|^2,
		\qquad\text{hence}\qquad
		\mathcal{D}=P(\rho)+\frac12|\mathbf{b}|^2-\frac12 .
	\end{align*}
	Thus $\mathcal{D}$ is, up to an additive constant, the sum of the thermodynamic and magnetic pressures, the quantity whose gradient is the full pressure force in \eqref{equ pcmhd}$_2$:
	\begin{align*}
		\nabla P+\nabla B_3+\frac12\nabla|\mathbf{B}|^2=\nabla\mathcal{D}.
	\end{align*}
	We emphasize that $\mathcal{D}$ is defined by the exact nonlinear expression \eqref{def D}, not by its linearization. Consequently the magnetic pressure $\frac12\nabla|\mathbf{B}|^2$, the very term that defeats the incompressible argument of \cite{PanZhouZhu18}, is not estimated at all: it is absorbed into the structure. The obstruction becomes part of the mechanism.
	
	\medskip
	\noindent\textbf{(ii) The averaged pair $(\overline{\mathbf{u}},\overline{\mathcal{D}})$ closes.} Using the equations for $a$ and $\mathbf{B}$ in \eqref{equ pcmhd} and the normalization $P'(1)=1$, a direct computation gives
	\begin{equation}\label{eq mathcalD}
		\left\{\begin{aligned}
			\partial_t   \mathbf{u} =&
			\mu\Delta    \mathbf{u} +(\lambda+\mu)\nabla \operatorname{div}\mathbf{u} +\partial_{x_3}\mathbf{B}-  \nabla  \mathcal{D}+F_{\mathbf{u}}    ,\\
			\partial_t\mathcal{D}=& \partial_{x_3}u_3-2\operatorname{div}\mathbf{u}+F_{\mathcal{D}},
		\end{aligned}\right.
	\end{equation}
	where $F_{\mathbf{u}}$ and $F_{\mathcal{D}}$ are quadratic. The coefficient $2$ is not accidental: it is
	\begin{align*}
		2=P'(1)+|\mathbf{e}_3|^2=c_s^2+v_A^2,
	\end{align*}
	the sum of the squared sound and Alfv\'en speeds.
	
	The system \eqref{eq mathcalD} is not closed, since $\partial_{x_3}\mathbf{B}$ appears in \eqref{eq mathcalD}$_1$. The decisive point is what happens under averaging. Since $\overline{\partial_{x_3}f}=0$ for any periodic $f$, both linear terms involving $\partial_{x_3}$ disappear, and \eqref{eq mathcalD} reduces to a system in which the magnetic field is entirely absent at linear order:
	\begin{equation}\label{eq Dbar}
		\left\{\begin{aligned}
			\partial_t   \overline{\mathbf{u}} -\mu\Delta    \overline{\mathbf{u}} -(\lambda+\mu)\nabla \operatorname{div}\overline{\mathbf{u}} +  \nabla  \overline{\mathcal{D}} &=\overline{F_{\mathbf{u}}}    ,\\
			\partial_t\overline{\mathcal{D}}+2\operatorname{div}\overline{\mathbf{u}}&= \overline{F_{\mathcal{D}}}.
		\end{aligned}\right.
	\end{equation}
	In other words, in the variable $\mathcal{D}$ the intractable averaged sector of the MHD system becomes a barotropic compressible Navier--Stokes system with sound speed $\sqrt{2}$. This is the structural gain, and it is invisible in the unknowns $(\overline{a},\overline{\mathbf{B}})$, for which no such closed system exists.
	
	\medskip
	\noindent\textbf{(iii) The hidden wave structure and its damping.} Applying $\operatorname{div}$ to \eqref{eq mathcalD}$_1$, the magnetic term drops out exactly because $\operatorname{div}\mathbf{B}=0$; averaging the resulting identity and substituting \eqref{eq Dbar}$_2$, we obtain the strongly damped wave equation
	\begin{align}\label{eq Dwave}
		\partial_t^2\overline{\mathcal{D}}-2\Delta\overline{\mathcal{D}}-\nu\Delta\partial_t\overline{\mathcal{D}}=\partial_t\overline{F_{\mathcal{D}}}{-\nu\Delta\overline{F_{\mathcal{D}}}}-2\operatorname{div}\overline{F_{\mathbf{u}}},
		\qquad \nu=\lambda+2\mu>0.
	\end{align}
	This is a Kelvin--Voigt damped wave equation whose principal wave part carries the speed $\sqrt{2}=\sqrt{c_s^2+v_A^2}$, which is precisely the fast magnetosonic speed for propagation perpendicular to $\mathbf{e}_3$, the only direction of propagation available on the sector $k_3=0$, where the Alfv\'en and slow magnetosonic modes degenerate. The variable $\mathcal{D}$ diagonalizes the surviving mode.
	
	The dispersion relation of the linear part of \eqref{eq Dwave} is $\tau^2+\nu|\mathbf{k}|^2\tau+2|\mathbf{k}|^2=0$, with roots
	\begin{align*}
		\tau_{\pm}(\mathbf{k})=\frac{-\nu|\mathbf{k}|^2\pm\sqrt{\nu^2|\mathbf{k}|^4-8|\mathbf{k}|^2}}{2},
		\qquad
		\tau_{-}\sim-\nu|\mathbf{k}|^2,\quad \tau_{+}\to-\frac{2}{\nu}\ \ (|\mathbf{k}|\to\infty).
	\end{align*}
	The branch $\tau_{-}$ is the parabolic branch inherited from the viscosity. The branch $\tau_{+}$ is the interesting one: it converges to the finite negative limit $-2/\nu$, so it damps at a rate that does not degenerate as $|\mathbf{k}|\to\infty$ and involves no gain of derivatives. This is the hidden damping of $\mathcal{D}$, it is not dissipation, it produces no smoothing, and it is therefore compatible with a system in which the magnetic field is transported without diffusion. The limiting rate $2/\nu=(c_s^2+v_A^2)/(\lambda+2\mu)$ is the same as the damping rate of the combined quantity $d$ on the oscillatory sector (see \eqref{eq dG} below). This is not a coincidence but a consequence of the relation between $\mathcal{D}$ and $d$ recorded above: the two sectors relax at a common rate, through the linearly equivalent quantity, by two structurally different mechanisms.
	
	At the nonlinear level we realize \eqref{eq Dwave} not through the second-order equation but through the equivalent cross-term identity: differentiating $\int_{\mathbb{T}^3}\nabla^{s-2}\overline{\mathbf{u}}\cdot\nabla^{s-1}\overline{\mathcal{D}}\,dx$ in time and substituting \eqref{eq Dbar} produces the damping term $\|\overline{\mathcal{D}}\|^2_{\dot{H}^{s-1}}$ with a favorable sign, controlled by $\|\nabla\overline{\mathbf{u}}\|^2_{\dot{H}^{s-1}}$ and quadratic errors. This is carried out in Lemma \ref{est bE1}.
	
	\medskip
	\noindent\textbf{Why the average, and not $\mathcal{D}$ itself.} The term $\partial_{x_3}u_3$ in \eqref{eq mathcalD}$_2$ is a linear coupling of the same order as the damping it competes with; estimating it directly forces an additional constraint relating $\mu$ and $\lambda$. Averaging annihilates it identically. This is a second, independent reason for the average-oscillation decomposition. It is not merely a device for separating frequencies, but the operation that makes the total pressure obey a clean damped wave equation with no restriction on the viscosity coefficients. Accordingly, it is $\overline{\mathcal{D}}$, not $\mathcal{D}$, that enters our energy functional.
	
	\subsection{The oscillatory sector}
	On the complementary sector we follow the strategy of \cite{WuZhai23}, adapted to $\mathbf{n}=\mathbf{e}_3$. Setting
	\begin{align*}
		d=a+B_3,\qquad \mathbf{G}=\mathbb{Q}\mathbf{u}-\frac{1}{\nu}\Delta^{-1}\nabla d,\qquad \nu=\lambda+2\mu,
	\end{align*}
	the linearization of \eqref{equ pcmhd},
	\begin{equation}\label{equ pcmhd lin}
		\left\{\begin{aligned}
			&\partial_ta+\operatorname{div}\mathbf{u}=0,\\
			&\partial_t\mathbf{u}-\mu\Delta\mathbf{u}-(\lambda+\mu)\nabla\operatorname{div}\mathbf{u}+\nabla (a+ B_3)-\partial_{x_3}\mathbf{B}=0,\\
			&\partial_t\mathbf{B}-\partial_{x_3}\mathbf{u}+\mathbf{e}_3\operatorname{div}\mathbf{u} =0,
		\end{aligned}\right.
	\end{equation}
	yields
	\begin{equation}\label{eq dG}
		\left\{\begin{aligned}
			&\partial_td+\frac{2}{\nu}d+2\operatorname{div}\mathbf{G}=\partial_{x_3}u_3,\\
			&\partial_t\mathbf{G}-\nu\Delta\mathbf{G}=\frac{2}{\nu}\mathbb{Q}\mathbf{u}-\frac{1}{\nu}\Delta^{-1}\nabla (\partial_{x_3}u_3),
		\end{aligned}\right.
	\end{equation}
	exhibiting damping for $d$ at the rate $2/\nu$ announced above. Independently, since $\mathbb{P}=I-\mathbb{Q}$ commutes with $\Delta$ and $\partial_{x_3}$, the pair $(\mathbb{P}\mathbf{u},\partial_{x_3}\mathbf{B})$ satisfies
	\begin{equation}\label{equ Pud3B}
		\left\{\begin{aligned}
			&\partial_t\mathbb{P}\mathbf{u}-\mu\Delta\mathbb{P}\mathbf{u} =\partial_{x_3}\mathbf{B},\\
			&\partial_t(\partial_{x_3}\mathbf{B})-   \partial_{x_3}^2\mathbb{P}\mathbf{u} = \partial_{x_3}^2\mathbb{Q}\mathbf{u} -\partial_{x_3}(\mathbf{e}_3 \operatorname{div}\mathbf{u}) ,
		\end{aligned}\right.
	\end{equation}
	whence, applying $\partial_t-\mu\Delta$ to the second equation,
	\begin{align*}
		(\partial_t^2-\mu\Delta\partial_t)(\partial_{x_3} \mathbf{B})- \partial_{x_3}^2(\partial_{x_3} \mathbf{B})=(\partial_t-\mu\Delta)\big(\partial_{x_3}^2\mathbb{Q}\mathbf{u}-\partial_{x_3}(\mathbf{e}_3 \operatorname{div}\mathbf{u})\big),
	\end{align*}
	a damped wave equation for $\partial_{x_3}\mathbf{B}$. Since $\widetilde{\mathbf{B}}$ has vanishing $x_3$-average by definition, $\|\widetilde{\mathbf{B}}\|_{H^k}\lesssim\|\partial_{x_3}\mathbf{B}\|_{H^k}$ with no loss of derivatives, and therefore
	\begin{align*}
		\|\widetilde{a} \|_{H^{k}}\lesssim \|\widetilde{d} \|_{H^{k}}+\|\partial_{x_3}\mathbf{B} \|_{H^{k}} .
	\end{align*}
	Damping for $\widetilde{a}$ and $\widetilde{\mathbf{B}}$ separately follows. The symmetry \eqref{as symmetry} enters at the next stage. It gives $\overline{\mathbf{B}}_h=0$ and $\overline{u}_3=0$, so that the estimate just described applies to the full fields $\mathbf{B}_h$ and $u_3$ rather than only to their oscillations. This is what allows the Lorentz nonlinearity to be split as $\mathbf{B}\cdot\nabla\mathbf{B}=\widetilde{\mathbf{B}}_h\cdot\nabla_h\mathbf{B}+B_3\partial_{x_3}\mathbf{B}$ and controlled by the damping of the oscillatory sector; see the estimate of $F_{\overline{\mathbf{u}}}^{(1)}$ in Lemma \ref{est bE1}.
	
	On the averaged sector this decoupling is genuinely unavailable, and $\mathcal{D}$ does not restore it: $\overline{a}$ and $\overline{\mathbf{B}}$ remain undamped, as Theorem \ref{thm1} reflects. What $\overline{\mathcal{D}}$ supplies instead is damping for the one combination that appears as a force in the $\overline{\mathbf{u}}$ equation, and this is exactly what is needed to close the energy estimates without any damping for $\overline{a}$ and $\overline{\mathbf{B}}$ individually.
	
	\subsection{Weighted energy functionals}
	The two sectors relax on different time scales and must be weighted differently. With $s>7$, $\sigma\in(0,1/2)$, $i=1,2,3$ and a large parameter $\mathcal{T}>0$ to be fixed, we set
	\begin{align}\label{def energy}
		\begin{aligned}
			E_0(t)=&\sup_{\tau\in[0,t]}(\mathcal{T}+\tau)^{-\sigma}\| (a,\mathbf{B},\mathbf{u})(\tau)\|^2_{H^{s}}+\int_{0}^{t}(\mathcal{T}+\tau)^{-1-\sigma}\| (a,\mathbf{B})(\tau)\|^2_{H^{s}}d\tau\\
			&+\int_{0}^{t}(\mathcal{T}+\tau)^{-\sigma}\| \nabla \mathbf{u}(\tau)\|^2_{H^{s}}d\tau,\\
			\overline{E_i}(t)=&\sup_{\tau\in[0,t]} \| (\overline{a},\overline{\mathbf{B}})(\tau)\|^2_{H^{s-i}}+\sup_{\tau\in[0,t]}(\mathcal{T}+\tau)^{i-\sigma}\| \overline{\mathbf{u}}(\tau)\|^2_{H^{s-i}}+\int_{0}^{t}(\mathcal{T}+\tau)^{i-\sigma}\| \nabla\overline{\mathbf{u}}(\tau)\|^2_{H^{s-i}}d\tau\\
			&+\sup_{\tau\in[0,t]}(\mathcal{T}+\tau)^{i-\sigma}\| \overline{\mathcal{D}}(\tau)\|^2_{\dot{H}^{s-i}}+\int_{0}^{t}(\mathcal{T}+\tau)^{i-\sigma}\| \overline{\mathcal{D}}(\tau)\|^2_{\dot{H}^{s-i}}d\tau,\\
			\widetilde{E_i}(t)=&\sup_{\tau\in[0,t]}(\mathcal{T}+\tau)^{i-\sigma} \| (\widetilde{a},\widetilde{\mathbf{u}},\widetilde{\mathbf{B}})(\tau)\|^2_{H^{s-i}}+\int_{0}^{t}(\mathcal{T}+\tau)^{i-\sigma}\| \nabla\widetilde{\mathbf{u}}(\tau)\|^2_{H^{s-i}}d\tau,\\
			N_i(t)=&\sup_{\tau\in[0,t]}(\mathcal{T}+\tau)^{i-\sigma} \| (\widetilde{d},\widetilde{\mathbf{G}})(\tau)\|^2_{H^{s-1-i}}+\int_{0}^{t}(\mathcal{T}+\tau)^{i-\sigma}\| (\nabla \widetilde{\mathbf{G}},\widetilde{d},\partial_{x_3}\mathbf{B})(\tau)\|^2_{H^{s-1-i}}d\tau,
		\end{aligned}
	\end{align}
	and $E_i=\overline{E_i}+\widetilde{E_i}$, $E_{total}=E_0+\sum_{i=1}^3(E_i+N_i)$. Note that in $\overline{E_i}$ the damping term $\int_0^t(\mathcal{T}+\tau)^{i-\sigma}\|\overline{\mathcal{D}}\|^2_{\dot{H}^{s-i}}d\tau$ carries the same weight and the same number of derivatives as the corresponding supremum, the signature of damping without smoothing identified in \S\ref{subsec D}, whereas $(\overline{a},\overline{\mathbf{B}})$ appear with no time weight at all, since they do not decay.
	
	Our argument relies on two technical devices.
	
	The first is the shifted time weights. The mismatch between the weights carried by $(\overline{a},\overline{\mathbf{B}})$ and by $\overline{\mathbf{u}}$ prevents the linear coupling terms from cancelling when the individual estimates are combined. Using the shifted weight $(\mathcal{T}+t)^{i-\sigma}$ rather than $(1+t)^{i-\sigma}$ generates a small prefactor $\mathcal{T}^{\frac{2\sigma-1}{2}}$, and choosing $\mathcal{T}$ large absorbs these terms. This is where the restriction $\sigma<1/2$ is used.
	
	The second is the density-weighted estimates for $\widetilde{\mathbf{B}}$. The energy provides decay for $\|\widetilde{\mathbf{B}}\|_{H^{s-2}}$ but not for $\|\widetilde{\mathbf{B}}\|_{H^{s-1}}$, so the top-order term $\overline{\mathbf{B}}\cdot\nabla^{s}\widetilde{\mathbf{u}}\cdot\nabla^{s-1}\widetilde{\mathbf{B}}$ loses a derivative. Pairing it against the corresponding term in the $\widetilde{\mathbf{u}}$ equation does not suffice, because the density factor $I(a)$ leaves the residual
	\begin{align*}
		\int_{0}^t\!\!\int_{\mathbb{T}^3}(\mathcal{T}+\tau)^{1-\sigma}\frac{\overline{a}}{1+\overline{a}}\, \overline{\mathbf{B}}\cdot \nabla\nabla^{s-1} \widetilde{\mathbf{B}}\cdot \nabla^{s-1}\widetilde{\mathbf{u}}\,dx\,d\tau,
	\end{align*}
	which cannot be absorbed since $\overline{a}$ does not decay. Performing the $\widetilde{\mathbf{B}}$ estimate in the weighted norm with density $\frac{1}{1+\overline{a}}$ produces an exact cancellation. The same weight handles $\mathbf{B}\operatorname{div}\mathbf{u}$ and $\frac12\nabla|\mathbf{B}|^2$.
	
	\begin{rem}
		Compared with \cite{PanZhouZhu18}, our functionals do not contain the top-order quantities $\|\partial_{x_3}\mathbf{u}\|_{H^{s-1}}$ and $\|\partial_{x_3}\mathbf{B}\|_{H^{s-1}}$. In the absence of the incompressibility constraint, the highest-order advective terms no longer cancel under integration by parts.
	\end{rem}
	
	\begin{rem}\label{rem hs}
		The solution of Theorem \ref{thm1} in fact satisfies the top-order bound
		\begin{align*}
			\|(a,\mathbf{u},\mathbf{B})(t)\|_{H^{s}}\leq C\varepsilon(1+t)^{\sigma/2},
		\end{align*}
		with growth as slow as desired.
	\end{rem}
	
	\begin{rem}
		The requirement $s>7$ is not optimal; it is imposed for technical convenience.
	\end{rem}
	
	\begin{rem}
		Without the symmetry \eqref{as symmetry}, and for a rational background field, the global stability of the three-dimensional periodic MHD system with viscosity or resistivity alone remains open. We expect the total pressure $\mathcal{D}$ and its damped wave structure \eqref{eq Dwave} to be relevant beyond the symmetric class, since neither \eqref{def D} nor \eqref{eq mathcalD} uses \eqref{as symmetry}.
	\end{rem}
	
	\subsection{Organization}
	Section \ref{pre} collects the preliminary product, commutator, composition and Poincar\'e-type inequalities. Section \ref{apr} establishes the {\it a priori} estimates for $E_0$, $\overline{E}_1$, $\widetilde{E}_1$, $N_1$, $E_3$ and $N_3$; the damping of $\overline{\mathcal{D}}$ is extracted in the proof of Lemma \ref{est bE1}. Section \ref{sec:bootstrap-closure} closes the bootstrap and proves Theorem \ref{thm1}.
	
	Throughout, $A\lesssim B$ means $A\leq CB$ for a constant $C>0$ which may change from line to line and is independent of $t$ and $\mathcal{T}$. We write $P$ for the pressure and $\mathbb{P}=I-\mathbb{Q}$ for the Leray projector; the two are always distinguished by the font.

	\vskip .2in
	\section{Preliminaries} \label{pre}
	In this section, we provide several useful inequalities that will be frequently used in the subsequent sections. We begin with the standard Sobolev product and commutator estimates; see, for instance, \cite{Kato90,Kato98}.
	
	\begin{lem}\label{lem product est}
		Let $s\geq 0$. Then there exists a constant $C$ depending only on $s$, such that, for any $f,g\in {H^{s}}(\mathbb{T}^3)\cap {L^\infty}(\mathbb{T}^3)$, we have
		\begin{equation*}
			\|fg\|_{H^{s}}\leq C(\|f\|_{L^\infty}\|g\|_{H^{s}}+\|g\|_{L^\infty}\|f\|_{H^{s}}).
		\end{equation*}
	\end{lem}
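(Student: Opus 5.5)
The estimate in Lemma \ref{lem product est} is the classical Kato--Ponce (Moser-type) product inequality. On $\mathbb{T}^3$ I would prove it with a Littlewood--Paley/paraproduct decomposition, working first with integer $s$ and then indicating the general case. For integer $s=m\ge 0$ there is a more elementary route. Since $\|fg\|_{H^m}\simeq \|fg\|_{L^2}+\sum_{|\alpha|=m}\|\partial^\alpha(fg)\|_{L^2}$, and $\|fg\|_{L^2}\le\|f\|_{L^\infty}\|g\|_{L^2}$, it suffices to bound $\partial^\alpha(fg)$ for $|\alpha|=m$. By the Leibniz rule this is a sum of terms $\partial^\beta f\,\partial^{\alpha-\beta}g$. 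For each such term with $|\beta|=j$, H\"older gives
\[
\|\partial^\beta f\,\partial^{\alpha-\beta}g\|_{L^2}\le \|\nabla^j f\|_{L^{2m/j}}\|\nabla^{m-j}g\|_{L^{2m/(m-j)}}.
\]

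The key tool is then the Gagliardo--Nirenberg interpolation
\[
\|\nabla^j f\|_{L^{2m/j}}\lesssim \|f\|_{L^\infty}^{1-j/m}\|\nabla^m f\|_{L^2}^{j/m}.
\]
On $\mathbb{T}^3$ the full $H^m$ norm may be needed on the right. This is harmless: the mean-zero part satisfies the homogeneous inequality, and the constant mode is bounded by $\|f\|_{L^\infty}$. Applying the interpolation to both factors, the term is bounded by
\[
\bigl(\|f\|_{L^\infty}\|g\|_{H^m}\bigr)^{1-j/m}\bigl(\|g\|_{L^\infty}\|f\|_{H^m}\bigr)^{j/m}.
\]
Young's inequality $x^\theta y^{1-\theta}\le x+y$ then yields the claimed bound. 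The endpoint terms $j=0$ and $j=m$ are immediate. The constant depends only on $m$, through the multinomial coefficients and the Gagliardo--Nirenberg constants.

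For non-integer $s$ I would use Bony's decomposition $fg=T_fg+T_gf+R(f,g)$ with periodic Littlewood--Paley blocks $\Delta_q$. For the paraproduct, $\|\Delta_q(T_fg)\|_{L^2}\lesssim \|f\|_{L^\infty}\sum_{|q'-q|\le 4}\|\Delta_{q'}g\|_{L^2}$, because $\|S_{q'-1}f\|_{L^\infty}\lesssim\|f\|_{L^\infty}$. Multiplying by $2^{qs}$ and taking $\ell^2$ sums gives $\|T_fg\|_{H^s}\lesssim\|f\|_{L^\infty}\|g\|_{H^s}$, and $T_gf$ is treated symmetrically. For the remainder, $\Delta_q R$ involves only $q'\ge q-3$, so
\[
2^{qs}\|\Delta_qR\|_{L^2}\lesssim \|f\|_{L^\infty}\sum_{q'\ge q-3}2^{(q-q')s}2^{q's}\|\Delta_{q'}g\|_{L^2}.
\]
Young's convolution inequality on sequences applies when $s>0$. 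The low-frequency block is controlled by $\|fg\|_{L^2}$.

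The main obstacle is the remainder term at $s=0$, where the geometric factor no longer sums. This case is, however, trivial directly from $\|fg\|_{L^2}\le\|f\|_{L^\infty}\|g\|_{L^2}$. The only other delicate point is the torus version of Gagliardo--Nirenberg with the zero mode, which is handled as described above. Since the paper cites \cite{Kato90,Kato98}, one may also simply invoke those references.
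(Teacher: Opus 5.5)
Your proof is correct, but it takes a different route from the paper. The paper does not prove this lemma; it quotes it from the Kato--Ponce-type references it cites.

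Your Leibniz--H\"older--Gagliardo--Nirenberg argument for integer $s=m$ is the classical Moser-type proof. It is elementary, and it already covers every use of the lemma in the paper: $s$ is an integer there, and all Sobolev indices that appear ($H^{s-i}$, $H^2$, $H^3$, $H^4$) are integers. Your Bony-decomposition argument handles all real $s>0$, integers included, so the integer case is in fact optional. Because you use periodic Littlewood--Paley blocks, that argument lives on $\mathbb{T}^3$ directly. The classical Kato--Ponce inequality, by contrast, is proved on $\mathbb{R}^n$ and gives more ($L^p$ versions), but a transference to the torus is left implicit when it is simply cited.

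One small streamlining concerns the periodic Gagliardo--Nirenberg bound you actually use, $\|\nabla^j f\|_{L^{2m/j}(\mathbb{T}^3)}\lesssim \|f\|_{L^\infty}^{1-j/m}\|f\|_{H^m(\mathbb{T}^3)}^{j/m}$. It follows in one line by applying the whole-space inequality to $\chi f$, where $f$ is extended periodically and $\chi\in C_c^\infty(\mathbb{R}^3)$ equals $1$ on a fundamental cell. Then $\|\chi f\|_{L^\infty}\lesssim\|f\|_{L^\infty}$ and $\|\nabla^m(\chi f)\|_{L^2(\mathbb{R}^3)}\lesssim\|f\|_{H^m(\mathbb{T}^3)}$. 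This replaces your somewhat loose appeal to a homogeneous inequality for the mean-zero part, which would itself need a separate torus argument.
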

	
	\begin{lem}\label{lem commu} 
		Let $s> 0$. Then there exists a constant $C$ depending only on $s$, such that, for any $f,g\in {H^{s}}(\mathbb{T}^3)\cap W^{1,\infty}(\mathbb{T}^3)$, there holds
		\begin{align*}
			\|[\Lambda^s,f\cdot\nabla ]g\|_{L^2}\leq C(\|\nabla f\|_{L^\infty}\|\Lambda^sg\|_{L^2}+\|\Lambda^s f\|_{L^2}\|\nabla g\|_{L^\infty}).
		\end{align*}
		\begin{equation*}
			\|[\Lambda^s,f]g\|_{L^2}
			\leq
			C(
			\|\nabla f\|_{L^\infty}
			\|\Lambda^{s-1}g\|_{L^2}
			+
			\|\Lambda^s f\|_{L^2}
			\|g\|_{L^\infty}
			),
		\end{equation*}
		where
		\[
		\Lambda^s=(-\Delta)^{s/2},
		\qquad
		[\Lambda^s,f]g
		:=\Lambda^s(fg)-f\Lambda^s g.
		\]
	\end{lem}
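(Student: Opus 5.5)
The two commutator bounds in Lemma \ref{lem commu} are the Kato--Ponce estimates on $\mathbb{T}^3$. I would prove them by Littlewood--Paley paraproduct decomposition on the torus, in the same way as the product estimate of Lemma \ref{lem product est}. The first bound will follow from the second, so the work is concentrated there.

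\textbf{Reduction of the first bound.} Since $f\cdot\nabla g=\sum_j f_j\partial_j g$ and $\partial_j$ commutes with $\Lambda^s$, we have
\[
[\Lambda^s,f\cdot\nabla]g=\sum_j[\Lambda^s,f_j]\partial_j g .
\]
Applying the second estimate with $g$ replaced by $\partial_j g$ gives
\[
\|\nabla f\|_{L^\infty}\|\Lambda^{s-1}\nabla g\|_{L^2}+\|\Lambda^s f\|_{L^2}\|\nabla g\|_{L^\infty}.
\]
Since $\|\Lambda^{s-1}\nabla g\|_{L^2}\simeq\|\Lambda^s g\|_{L^2}$ on mean-zero parts, this is the first bound. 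The zero Fourier mode of $g$ is annihilated by $\nabla$, so it causes no trouble.

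\textbf{Proof of the second bound.} Write the Bony decomposition
\[
fg=T_fg+T_gf+R(f,g),
\]
and accordingly split
\[
[\Lambda^s,f]g=[\Lambda^s,T_f]g+\big(\Lambda^s T_gf+\Lambda^s R(f,g)\big)-\big(T_{\Lambda^s g}f+R(f,\Lambda^s g)\big).
\]
The three groups are handled as follows.

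\emph{Paraproduct and remainder terms.} For $\Lambda^s T_gf$ and $\Lambda^s R(f,g)$, each dyadic block is localized at frequency comparable to that of the $f$-factor, or below it. Hence both are bounded by $\|g\|_{L^\infty}\|\Lambda^s f\|_{L^2}$, using $s>0$ to sum the remainder.

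For $T_{\Lambda^s g}f$ and $R(f,\Lambda^s g)$, the derivative sits on $g$ at frequencies at most comparable to those of $f$. I would transfer $s-1$ derivatives to $f$ on the high block, which gives a bound by $\|\nabla f\|_{L^\infty}\|\Lambda^{s-1}g\|_{L^2}$, or alternatively by $\|\Lambda^s f\|_{L^2}\|g\|_{L^\infty}$.

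\emph{Main term.} The core is $[\Lambda^s,T_f]g=\sum_q[\Lambda^s,S_{q-1}f]\Delta_q g$. On each block $\Lambda^s$ acts as a multiplier $\phi_q$ adapted to the annulus of size $2^q$. The commutator is
\[
\int h_q(y)\big(S_{q-1}f(x-y)-S_{q-1}f(x)\big)\Delta_q g(x-y)\,dy ,
\]
where $h_q$ is the kernel of $\phi_q$. By the mean value theorem it is bounded by
\[
\|\nabla f\|_{L^\infty}\,\||y|h_q\|_{L^1}\,\|\Delta_q g\|_{L^2}\lesssim 2^{q(s-1)}\|\nabla f\|_{L^\infty}\|\Delta_q g\|_{L^2}.
\]
By almost orthogonality these blocks square-sum to $\|\nabla f\|_{L^\infty}\|\Lambda^{s-1}g\|_{L^2}$.

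\textbf{Expected obstacle.} The delicate points are the periodic setting and the low modes. On $\mathbb{T}^3$, $\Lambda^s$ is not smooth at frequency zero, and the kernel bound $\||y|h_q\|_{L^1}\lesssim 2^{q(s-1)}$ has to be justified via Poisson summation from the $\mathbb{R}^3$ kernel. The finitely many low frequencies of $g$ are treated directly, since there $\|\Delta_{\le 0}g\|\lesssim\|g\|_{L^\infty}$. Since the paper cites \cite{Kato90,Kato98}, I would in the end simply invoke those references, with this sketch serving as justification.
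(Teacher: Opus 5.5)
Your reduction of the first bound to the second is correct, and so are your estimates for $[\Lambda^s,T_f]g$, $\Lambda^s T_g f$, $\Lambda^s R(f,g)$ and $T_{\Lambda^s g}f$. The paper gives no argument beyond citing Kato--Ponce, so your fallback of invoking those references coincides with it.

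Your self-contained sketch, however, breaks at the remainder $R(f,\Lambda^s g)=\sum_q \Delta_q f\,\widetilde{\Delta}_q\Lambda^s g$.

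\emph{Why the sketch fails there.} Each block is Fourier-supported in a ball $\{|\xi|\lesssim 2^q\}$, not in an annulus, so almost orthogonality is unavailable. Unlike $\Lambda^s R(f,g)$, there is also no outer $\Lambda^s$ to produce the geometric factor $2^{(j-q)s}$ that made the sum converge there. Blockwise H\"older therefore yields only
\[
\|R(f,\Lambda^s g)\|_{L^2}\lesssim \|g\|_{L^\infty}\sum_q 2^{qs}\|\Delta_q f\|_{L^2}
\quad\text{or}\quad
\|R(f,\Lambda^s g)\|_{L^2}\lesssim \|\nabla f\|_{L^\infty}\sum_q \|\widetilde{\Delta}_q\Lambda^{s-1} g\|_{L^2}.
\]
These are $\dot B^{s}_{2,1}$ and $\dot B^{s-1}_{2,1}$ norms, which are strictly stronger than, and not controlled by, $\|\Lambda^s f\|_{L^2}$ and $\|\Lambda^{s-1} g\|_{L^2}$.

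Moving derivatives between the two factors does not help. The obstruction is the $\ell^1$ summation in $q$, and the Besov remainder estimate requires strictly positive total regularity, whereas here it is $s+(-s)=0$.

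\emph{What is missing.} You need a genuinely bilinear $L^2\times L^\infty\to L^2$ bound for this high--high interaction. Write
\[
R(f,\Lambda^s g)=\sum_q \Delta_q' F\,\psi(2^{-q}D)g ,
\]
where $F=\Lambda^s f$, $\Delta_q'$ is a harmless modification of $\Delta_q$, and $\psi(\xi)=|\xi|^s\widetilde{\varphi}(\xi)$ is smooth and supported in an annulus. Then pair with $h\in L^2$, replace $h$ by $S_{q+N}h$ in the $q$-th term, apply Cauchy--Schwarz in $q$, and use Carleson's embedding
\[
\sum_q\int |\psi(2^{-q}D)g|^2\,|S_{q+N}h|^2\,dx\lesssim \|g\|_{\mathrm{BMO}}^2\,\|h\|_{L^2}^2\le C\,\|g\|_{L^\infty}^2\,\|h\|_{L^2}^2 .
\]
This Carleson-measure (square-function) step is what the Coifman--Meyer bilinear multiplier theorem behind the Kato--Ponce estimate supplies. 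Dyadic H\"older and almost orthogonality alone cannot replace it.

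You should also subtract the mean of $f$ first: constants commute with $\Lambda^s$, but the mean sits in the lowest blocks and is invisible to $\|\Lambda^s f\|_{L^2}$. With these two additions your sketch closes; without the Carleson step it does not.
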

	
	In the following, we establish estimates for the average and oscillatory parts of a composite function.
	\begin{prop}\label{lem bftf}
		Let $f$ be a sufficiently smooth scalar function. For any $a\in H^k(\mathbb{T}^3)$ with $\| a\|_{H^{k}}\ll 1, \ k\geq 2$, there exists a constant $C>0$ depending only on $f,k$  such that
		\begin{align*}
			\|\overline{f(a)} \|_{H^{k}}\leq C,\qquad
			\|\widetilde{f(a)} \|_{H^{k}}\leq    C\|\widetilde{a} \|_{H^{k}}.
		\end{align*}
	\end{prop}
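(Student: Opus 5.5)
The first bound is a standard composition estimate, and the second comes from a mean-value representation of $\widetilde{f(a)}$; the only delicate point is keeping the estimate linear in $\widetilde a$. Since $k\ge 2$, Sobolev embedding gives $\|a\|_{L^\infty}\lesssim\|a\|_{H^k}\ll1$. Hence $a$ takes values in a fixed compact interval, say $[-1/2,1/2]$, on which $f$ and all its derivatives are bounded. Note that averaging in $x_3$ is an $L^2$-orthogonal projection commuting with derivatives, so $\|\overline g\|_{H^k}\le\|g\|_{H^k}$ and $\|\widetilde g\|_{H^k}\le\|g\|_{H^k}$ on $\mathbb{T}^3$.

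For the first estimate, write $f(a)=f(0)+g(a)a$ with $g(s)=\int_0^1f'(\theta s)\,d\theta$, a smooth function. The classical Moser composition estimate (which follows from Lemma \ref{lem product est} by a chain-rule induction on the derivative count) gives $\|f(a)-f(0)\|_{H^k}\le C(\|a\|_{L^\infty})\|a\|_{H^k}$. On $\mathbb{T}^3$, which has unit volume, the constant satisfies $\|f(0)\|_{H^k}=|f(0)|$. Therefore
\[\|\overline{f(a)}\|_{H^k}\le\|f(a)\|_{H^k}\le |f(0)|+C\|a\|_{H^k}\le C.\]

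For the second estimate, the key identity is
\[f(a)-f(\overline a)=\widetilde a\, h,\qquad h=\int_0^1 f'(\overline a+\theta\widetilde a)\,d\theta .\]
Since $f(\overline a)$ is independent of $x_3$, it satisfies $\widetilde{f(\overline a)}=0$. Hence $\widetilde{f(a)}=\widetilde{\widetilde a\,h}$, and therefore $\|\widetilde{f(a)}\|_{H^k}\le\|\widetilde a\,h\|_{H^k}$. Lemma \ref{lem product est} then gives
\[\|\widetilde a\,h\|_{H^k}\lesssim\|\widetilde a\|_{L^\infty}\|h\|_{H^k}+\|h\|_{L^\infty}\|\widetilde a\|_{H^k}.\]
Here $\|h\|_{L^\infty}\le\sup_{|s|\le1}|f'(s)|$, and $\|\widetilde a\|_{L^\infty}\lesssim\|\widetilde a\|_{H^k}$ by Sobolev embedding since $k\ge2$. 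It therefore remains only to show $\|h\|_{H^k}\le C$.

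The main obstacle is this bound on $\|h\|_{H^k}$, which must hold uniformly in $\theta$. Apply the composition estimate to $\theta\mapsto f'(\overline a+\theta\widetilde a)$: its argument has $H^k$ norm at most $\|\overline a\|_{H^k}+\|\widetilde a\|_{H^k}\le2\|a\|_{H^k}\ll1$ and $L^\infty$ norm at most $1/2$. This gives $\|f'(\overline a+\theta\widetilde a)\|_{H^k}\le |f'(0)|+C\|a\|_{H^k}\le C$ uniformly in $\theta\in[0,1]$. Minkowski's inequality for the $\theta$-integral then yields $\|h\|_{H^k}\le C$. Combining the bounds gives $\|\widetilde{f(a)}\|_{H^k}\le C\|\widetilde a\|_{H^k}$, with $C$ depending only on $f$ and $k$.
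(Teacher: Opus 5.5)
Your proof is correct and follows essentially the paper's route. Both arguments expand $f(a)$ about $\overline a$, use that $f(\overline a)$ is $x_3$-independent so that $\widetilde{f(\overline a)}=0$, and conclude with Lemma \ref{lem product est} plus a Moser-type composition estimate. That estimate is classical, but for $k=2$ it does not follow from Lemma \ref{lem product est} by the chain-rule induction you sketch, since $\nabla a\in H^1$ need not be bounded, so the base case needs a separate argument.

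The remaining differences are cosmetic. You use the first-order integral remainder $\widetilde a\int_0^1 f'(\overline a+\theta\widetilde a)\,d\theta$ and the direct bound $\|\overline{f(a)}\|_{H^k}\le\|f(a)\|_{H^k}$. The paper instead writes a second-order Taylor expansion about $\overline a$, whose explicit form it reuses in the proof of Lemma \ref{est tE1} but which the proposition itself does not require.
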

	\begin{proof}
		By   Taylor expansion, we have
		\begin{align*}
			f(a)=& f(\overline{a}+\widetilde{a})=f(\overline{a})+f^{\prime}(\overline{a})\widetilde{a}+\frac{1}{2}f^{\prime\prime}(\overline{a})\widetilde{a}^2+ \mathcal{R}_{f}(\overline{a},\widetilde{a})\widetilde{a}^3,
		\end{align*}
		where $\mathcal{R}_{f}(\cdot)$ denotes the remainder term of the Taylor expansion, which depends only on the function $f$. Taking the average with respect to $x_3$ of the above expansion and noting that $\overline{\widetilde{a}}=0$, we obtain
		\begin{align}\label{equ bf}
			\overline{f(a)}=& \overline{f(\overline{a})}+\overline{f^{\prime}(\overline{a})\widetilde{a}}+\frac{1}{2}\overline{f^{\prime\prime}(\overline{a})\widetilde{a}^2} +\overline{\mathcal{R}_{f}(\overline{a},\widetilde{a})\widetilde{a}^3}\\
			=&f(\overline{a})+\frac{1}{2}f^{\prime\prime}(\overline{a})\overline{\widetilde{a}^2} +\overline{\mathcal{R}_{f}(\overline{a},\widetilde{a})\widetilde{a}^3} .\nonumber
		\end{align}
		Since $H^k(\mathbb T^3)$ is an algebra for $k\geq2$, the standard Sobolev composition and product estimates, together with the boundedness of the averaging operator on $H^k$, give
		\begin{align*}
			\|\overline{f(a)}\|_{H^k}\leq C.
		\end{align*}
		
		For the oscillatory part, it follows that
		\begin{align}
			\widetilde{f(a)}=&f(a)-\overline{f(a)}=f^{\prime}(\overline{a})\widetilde{a}+\frac{1}{2}f^{\prime\prime}(\overline{a})\widetilde{\widetilde{a}^2}+\widetilde{\mathcal{R}_{f}(\overline{a},\widetilde{a})\widetilde{a}^3}.\label{equ tf}
		\end{align}
		Applying the same composition and product estimates to \eqref{equ tf}, we obtain
		\begin{align*} 
			\|\widetilde{f(a)}\|_{H^{k}}
			\leq C\big(1+\|a\|_{H^k}^2\big)\|\widetilde{a}\|_{H^k}
			\leq C\|\widetilde{a} \|_{H^{k}},
		\end{align*}
		which completes the proof.
	\end{proof}

	Under the assumption \eqref{as zero mean} on initial data $(a_0,\mathbf{u}_0,\mathbf{B}_0)$, we have 
	\begin{align}\label{zero mean}
		\int_{\mathbb{T}^3} a \, dx = 0, \quad \int_{\mathbb{T}^3} \rho \mathbf{u} \, dx = 0, \quad \int_{\mathbb{T}^3} \mathbf{B} \, dx = 0.
	\end{align}
	We shall also use the following Poincar\'e-type inequalities established by Wu and Zhai \cite{WuZhai23}.
	\begin{lem}\label{lem poin}
		Suppose that $(\rho, \mathbf{u}, \mathbf{B})$ is a smooth solution to the system \eqref{equ pcmhd} on $[0,\infty)\times\mathbb{T}^3$ satisfying the zero-mean condition \eqref{zero mean}. Assume further that there exist positive constants $c_0$ and $C_0$ such that
		\[
		0<c_0\leq \rho(t,x)\leq C_0 \quad \text{for } (t,x) \in [0,\infty) \times \mathbb{T}^3.
		\]
		Then, there exists a constant $C > 0$ such that for all $t \ge 0$,    the following inequalities hold
		\begin{align}\label{Poi1H}
			\|\mathbf{B}(t)\|_{L^2}^2\le C\|\nabla \mathbf{B}(t)\|_{L^2}^2,
		\end{align}
		\begin{align}\label{Poi1}
			\|(\sqrt{\rho } \mathbf{u})(t)\|_{L^2}^2\le C\|\nabla  \mathbf{u}(t)\|_{L^2}^2,
		\end{align}
		and
		\begin{align}\label{Poi1'}
			\| \mathbf{u}(t)\|_{L^2}^2\le C\|\nabla  \mathbf{u}(t)\|_{L^2}^2.
		\end{align}
	\end{lem}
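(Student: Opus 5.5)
The plan is to derive all three inequalities from the standard Poincaré–Wirtinger inequality on $\mathbb{T}^3$, namely $\|f-\int_{\mathbb{T}^3}f\,dx\|_{L^2}\le C\|\nabla f\|_{L^2}$, combined with the conserved zero-mean conditions \eqref{zero mean}.

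\emph{Step 1: the estimate \eqref{Poi1H}.} The equation for $\mathbf{B}$ in \eqref{equ pcmhd} can be written in conservative form, because $\partial_{x_3}\mathbf{u}-\mathbf{e}_3\operatorname{div}\mathbf{u}$ and $\mathbf{B}\cdot\nabla\mathbf{u}-\mathbf{u}\cdot\nabla\mathbf{B}-\mathbf{B}\operatorname{div}\mathbf{u}$ are divergences when $\operatorname{div}\mathbf{B}=0$. Equivalently, it is the curl form $\partial_t\mathbf{b}=\nabla\times(\mathbf{u}\times\mathbf{b})$. Hence $\int_{\mathbb{T}^3}\mathbf{B}\,dx$ is conserved, and it vanishes by \eqref{as zero mean}. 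Poincaré–Wirtinger then gives \eqref{Poi1H} directly.

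\emph{Step 2: the estimate \eqref{Poi1'}.} Here $\mathbf{u}$ itself need not have zero mean; only $\rho\mathbf{u}$ does. Set $\bar{\mathbf{u}}_m=\int_{\mathbb{T}^3}\mathbf{u}\,dx$, the full-space mean (this differs from the paper's $x_3$-average notation). Then
\[
0=\int\rho\mathbf{u}\,dx=\int\rho(\mathbf{u}-\bar{\mathbf{u}}_m)\,dx+\bar{\mathbf{u}}_m\int\rho\,dx=\int\rho(\mathbf{u}-\bar{\mathbf{u}}_m)\,dx+\bar{\mathbf{u}}_m,
\]
using $\int\rho\,dx=1$. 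By Cauchy–Schwarz and $\rho\le C_0$,
\[
|\bar{\mathbf{u}}_m|\le C_0\|\mathbf{u}-\bar{\mathbf{u}}_m\|_{L^2}\le C\|\nabla\mathbf{u}\|_{L^2}.
\]
Therefore
\[
\|\mathbf{u}\|_{L^2}\le\|\mathbf{u}-\bar{\mathbf{u}}_m\|_{L^2}+|\bar{\mathbf{u}}_m|\le C\|\nabla\mathbf{u}\|_{L^2},
\]
which is \eqref{Poi1'}.

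\emph{Step 3: the estimate \eqref{Poi1}.} This follows from Step 2, since $\|\sqrt\rho\,\mathbf{u}\|_{L^2}^2\le C_0\|\mathbf{u}\|_{L^2}^2$. The lower bound $c_0$ is not actually needed; only the upper bound $C_0$ and the total mass normalization are used.

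\emph{Main point to check.} The only nontrivial issue is that the three zero-mean conditions persist in time. Mass conservation is immediate from the continuity equation. Momentum conservation requires the momentum equation in divergence form: $\rho\partial_t\mathbf{u}+\rho\mathbf{u}\cdot\nabla\mathbf{u}=\partial_t(\rho\mathbf{u})+\operatorname{div}(\rho\mathbf{u}\otimes\mathbf{u})$, and the Lorentz force $(\nabla\times\mathbf{b})\times\mathbf{b}=\operatorname{div}(\mathbf{b}\otimes\mathbf{b}-\tfrac12|\mathbf{b}|^2I)$ when $\operatorname{div}\mathbf{b}=0$. All these terms integrate to zero on the torus. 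The paper has already recorded these conservation laws as \eqref{zero mean}, so they may be taken as given.
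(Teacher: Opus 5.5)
Your proof is correct. The paper gives no argument for this lemma and simply quotes it from Wu--Zhai \cite{WuZhai23}, so your proposal supplies a self-contained proof where the paper has none.

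Your route differs from the customary one, which is also the natural explanation for the hypothesis $\rho\ge c_0$. The usual argument handles the weighted norm first. With $\mathbf{m}=\int_{\mathbb{T}^3}\mathbf{u}\,dx$ and $\int_{\mathbb{T}^3}\rho\mathbf{u}\,dx=0$, one writes $\int_{\mathbb{T}^3}\rho|\mathbf{u}|^2dx=\int_{\mathbb{T}^3}\rho\mathbf{u}\cdot(\mathbf{u}-\mathbf{m})\,dx$. Cauchy--Schwarz with $\rho\le C_0$ and Poincar\'e--Wirtinger then give \eqref{Poi1}, and the lower bound $\rho\ge c_0$ is needed to pass to \eqref{Poi1'}.

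You reverse the order. You bound the unweighted mean through $\mathbf{m}=-\int_{\mathbb{T}^3}\rho(\mathbf{u}-\mathbf{m})\,dx$, which uses the mass normalization $\int_{\mathbb{T}^3}\rho\,dx=1$ in place of the lower bound. This yields \eqref{Poi1'} directly, and you deduce \eqref{Poi1} from it.

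As you observe, this shows $c_0$ is superfluous: nonnegativity, an upper bound and positive total mass suffice. That is a marginally sharper statement, and it costs nothing here, since \eqref{density positive bound} supplies both bounds anyway.

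Your Step 1 is also correct. So is your verification of the conservation laws behind \eqref{zero mean}, via the conservative form of the induction equation and of the momentum equation with the Maxwell stress. Those checks make explicit what the paper takes for granted.
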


	\vskip .3in 
	\section{A priori estimates}
	\label{apr}
	
	We begin with the standard local theory and the bootstrap assumption used throughout this section. 
	For initial data satisfying the assumptions of Theorem~\ref{thm1}, with
	$\rho_0$ bounded away from zero, the standard local well-posedness theory for the compressible viscous non-resistive MHD system  (see \cite{Kawa84})  yields a time $T_*>0$ and a unique classical solution $(a,\mathbf u,\mathbf B)$ to \eqref{equ pcmhd} on $[0,T_*]$ such that
	\[
	(a,\mathbf B)\in C([0,T_*];H^s(\mathbb T^3)),\qquad
	\mathbf u\in C([0,T_*];H^s(\mathbb T^3))
	\cap L^2(0,T_*;H^{s+1}(\mathbb T^3)).
	\]
	
	Before proceeding to the {\it a priori} estimates, we assume that the system \eqref{equ pcmhd} admits a smooth solution $(a,\mathbf u,\mathbf B)$ on the time interval $[0, T]$ for some $T>0$. Then, by uniqueness, the symmetry and zero-mean properties imposed on the initial data are preserved on the interval $[0, T]$. We make the following \textit{a priori} assumption
	\begin{equation}\label{ass}
		E_{total}(T)\leq \delta^2,
	\end{equation}
	where $\delta>0$ is a sufficiently small constant to be fixed independently of $T$. 
	By the definition of $E_{total}$ and the Sobolev embedding $H^{s-1}(\mathbb T^3)\hookrightarrow L^\infty(\mathbb T^3)$, we have, for every $t\in[0,T]$,
	\[
	\|a(t)\|_{L^\infty}
	\lesssim \|a(t)\|_{H^{s-1}}
	\lesssim E_{total}^{\frac{1}{2}}(T)
	\leq \delta.
	\]
	Thus, upon choosing $\delta$ sufficiently small,
	\begin{equation}\label{density positive bound}
		\frac12\leq \rho(t,x)=1+a(t,x)\leq \frac32,
		\qquad (t,x)\in[0,T]\times\mathbb T^3.
	\end{equation}
	In particular, all smooth functions of $\rho$ appearing below, including
	$\rho^{-1}$ and $P'(\rho)/\rho$, remain uniformly bounded together with the derivatives required in the subsequent estimates. We shall derive all estimates in this section under \eqref{ass}; the bootstrap assumption will be improved in the final argument, thereby extending the local solution globally.
	\subsection{Estimate of $E_0(t)$}
	We begin by establishing the highest-order energy bound for  $(a,\mathbf{u},\mathbf{B})$ without yet separating their average and oscillatory parts. The negative time weight supplies an integrable contribution for $(a,\mathbf{B})$, while the pressure and Lorentz-force couplings are paired with the velocity and magnetic equations to cancel the highest-order linear and nonlinear terms. 
	\begin{lem}\label{est E0}
		Under the condition of Theorem \ref{thm1} and assumption \eqref{ass}, we have
		\begin{align*}
			E_0(t)\lesssim \mathcal{T}^{-\sigma}\| (a_0,\mathbf{u}_0,\mathbf{B}_0)\|^2_{H^{s}}+ E_{total}^{\frac32}(t).
		\end{align*}
	\end{lem}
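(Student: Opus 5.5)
We prove the bound by a weighted $H^s$ energy estimate for the full system \eqref{equ pcmhd}, written in the symmetrizing variables of the compressible Navier--Stokes/MHD theory. For $0\le k\le s$ we apply $\nabla^k$ to \eqref{equ pcmhd}. We test the $a$-equation against $\frac{P'(\rho)}{\rho}\nabla^k a$, the momentum equation (divided by $\rho$, or kept with $\rho$ in front of $\partial_t$) against $\nabla^k\mathbf u$, and the $\mathbf B$-equation against $\nabla^k\mathbf B$.

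\textbf{Cancellation of the linear couplings.} With these multipliers, integration by parts cancels the terms $\nabla P$ against $\operatorname{div}\mathbf u$ in the $a$-equation, up to commutators. The terms $\partial_{x_3}\mathbf B-\nabla B_3$ cancel against $\partial_{x_3}\mathbf u-\mathbf e_3\operatorname{div}\mathbf u$. This yields
\[
\frac{d}{dt}\mathcal E_s+\mu\|\nabla\mathbf u\|_{H^s}^2+(\lambda+\mu)\|\operatorname{div}\mathbf u\|_{H^s}^2\lesssim \mathcal N_s,
\]
where $\mathcal E_s\simeq\|(a,\mathbf u,\mathbf B)\|_{H^s}^2$ by \eqref{density positive bound}.

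\textbf{Time weight.} We multiply by $(\mathcal T+t)^{-\sigma}$ and integrate in time. The derivative of the weight gives the good term $\sigma\int_0^t(\mathcal T+\tau)^{-1-\sigma}\mathcal E_s\,d\tau$, which controls the damping-like integral of $\|(a,\mathbf B)\|_{H^s}^2$ in $E_0$. The velocity dissipation gives $\int(\mathcal T+\tau)^{-\sigma}\|\nabla\mathbf u\|_{H^s}^2$, where the zero mode is handled by \eqref{Poi1'}. The initial term is $\mathcal T^{-\sigma}\|(a_0,\mathbf u_0,\mathbf B_0)\|_{H^s}^2$.

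\textbf{Nonlinear terms.} It remains to bound $\int_0^t(\mathcal T+\tau)^{-\sigma}|\mathcal N_s|\,d\tau\lesssim E_{total}^{3/2}$. The transport terms $\mathbf u\cdot\nabla a$ and $\mathbf u\cdot\nabla\mathbf B$ at top order lose a derivative. We treat them by Lemma \ref{lem commu}, and integrate by parts in the leading part $\int \mathbf u\cdot\nabla\nabla^s f\,\nabla^s f=-\frac12\int\operatorname{div}\mathbf u|\nabla^s f|^2$. The terms $a\operatorname{div}\mathbf u$, $\mathbf B\cdot\nabla\mathbf u$ and $\mathbf B\operatorname{div}\mathbf u$ put $s+1$ derivatives on $\mathbf u$, which is admissible since $\nabla\mathbf u\in L^2H^s$. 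Their top-order pieces $\mathbf B\cdot\nabla\nabla^s\mathbf u\cdot\nabla^s\mathbf B$ must cancel against the Lorentz terms $\mathbf B\cdot\nabla\mathbf B$ and $\frac12\nabla|\mathbf B|^2$ in the momentum equation, tested with $\nabla^s\mathbf u$. To make this cancellation exact, we divide by $\rho$ in the momentum equation and test the $\mathbf B$ equation with weight $\frac1\rho$, or alternatively keep $\rho\partial_t\mathbf u$ and note that the mismatch is a lower-order commutator. Likewise $\nabla P$ is handled through $P'(\rho)/\rho$.

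Every remaining term has the form (low norm)$\times$(high norm)$^2$. Its low-norm factor is $\|\nabla\mathbf u\|_{L^\infty}$ or $\|\nabla(a,\mathbf B)\|_{L^\infty}$, bounded via $H^{s-2}\hookrightarrow W^{1,\infty}$. Its high-norm factors are paired as $\|\nabla\mathbf u\|_{H^s}\|(a,\mathbf B)\|_{H^s}$ or $\|(a,\mathbf B)\|_{H^s}^2$.

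\textbf{Main obstacle.} The hardest case is $\int(\mathcal T+\tau)^{-\sigma}\|\nabla\mathbf u\|_{L^\infty}\|(a,\mathbf B)\|_{H^s}^2\,d\tau$. The factor $\|\mathbf u\|_{H^{s-2}}$ decays like $(\mathcal T+\tau)^{-(3-\sigma)/2}$ from $E_3$ and $\widetilde E_3$, with $\overline{\mathbf u}$ and $\widetilde{\mathbf u}$ treated separately. The other factor $\|(a,\mathbf B)\|_{H^s}^2\le(\mathcal T+\tau)^{\sigma}E_0$. Since $(3-\sigma)/2>1$, the time integral converges and the term is bounded by $E_{total}^{3/2}$.

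The mixed term $\|\nabla\mathbf u\|_{H^s}\|(a,\mathbf B)\|_{H^s}\|\cdot\|_{W^{1,\infty}}$ is handled by Cauchy--Schwarz in time. We split the weights as $(\mathcal T+\tau)^{-\sigma/2}\cdot(\mathcal T+\tau)^{-(1+\sigma)/2}$, and the leftover factor $(\mathcal T+\tau)^{1/2}\|\cdot\|_{W^{1,\infty}}$ is bounded using the decay of $E_1$ through $E_3$. The one subtle point is to check that the non-decaying $\overline a,\overline{\mathbf B}$ never appear there without a $\nabla\mathbf u$ partner, which the cancellation above guarantees.
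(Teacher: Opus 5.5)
Your architecture matches the paper's proof. It uses the weight $(\mathcal T+\tau)^{-\sigma}$, the symmetrizer $P'(1+a)/(1+a)$ on $\nabla^s a$, exact cancellation of $\partial_{x_3}\mathbf B-\nabla B_3$ against $\partial_{x_3}\mathbf u-\mathbf e_3\operatorname{div}\mathbf u$, and cancellation of the $(s+1)$-derivative parts of $\mathbf B\cdot\nabla\mathbf u$ and $\mathbf B\operatorname{div}\mathbf u$ against tension and magnetic pressure. Keeping $\rho\partial_t\mathbf u$, as the paper does, makes that cancellation exact with no weight on the $\mathbf B$ energy.

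The gap is in the time-weight bookkeeping for the cubic remainders. For the mixed terms you put $(\mathcal T+\tau)^{-\sigma/2}\|\nabla\mathbf u\|_{H^s}$ and $(\mathcal T+\tau)^{-\frac{1+\sigma}{2}}\|(a,\mathbf B)\|_{H^s}$ in $L^2_t$ via $E_0$. That forces $(\mathcal T+\tau)^{1/2}\|\cdot\|_{W^{1,\infty}}$ to be bounded for the third factor. This fails whenever that factor is $a$ or $\mathbf B$, since $\overline{E_i}$ gives $(\overline a,\overline{\mathbf B})$ no decay at all. By Cauchy--Schwarz, $E_0$ alone controls the two high norms only against $(\mathcal T+\tau)^{-\frac12-\sigma}$, which is a factor $(\mathcal T+\tau)^{1/2}$ short of $(\mathcal T+\tau)^{-\sigma}$.

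Your claim that the cancellation keeps $\overline a,\overline{\mathbf B}$ out of this slot is not correct. The cancellation removes only the terms with $s+1$ derivatives, and the commutator remainders survive with exactly this structure. Examples:
\begin{itemize}
\item $\nabla^s\mathbf u\cdot\nabla a\,\nabla^s a$ from $[\nabla^s,\mathbf u\cdot\nabla]a$;
\item $(\nabla\mathbf B\cdot\nabla\nabla^{s-1}\mathbf u)\cdot\nabla^s\mathbf B$ from $\nabla^s(\mathbf B\cdot\nabla\mathbf u)$;
\item $(\nabla^s\mathbf B\cdot\nabla\mathbf B)\cdot\nabla^s\mathbf u$ from the tension;
\item the pressure remainder $\nabla^s a\,\nabla(P'(1+a))\cdot\nabla^s\mathbf u$;
\item $\nabla a\,\nabla^{s-1}\partial_t\mathbf u$ from $[\nabla^s,a]\partial_t\mathbf u$, which by \eqref{est dtu hk} carries $\nabla^s(a,\mathbf B)$.
\end{itemize}
For the same reason, your remark that $s+1$ derivatives on $\mathbf u$ are admissible because $\nabla\mathbf u\in L^2H^s$ is false when the coefficient is $a$ or $\mathbf B$. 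That is why the symmetrizer and the magnetic cancellation are indispensable, not optional.

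The missing idea is to take the velocity factor from the dissipation in $E_1$, not $E_0$. In each term above, $\mathbf u$ carries at most $s$ derivatives. Where needed, first move one derivative off $\mathbf u$ by parts, as the paper does for the pressure remainder combined with $A_{1,1}$. Then $\mathbf u$ is bounded by $\|\nabla\mathbf u\|_{H^{s-1}}$, using \eqref{Poi1'} for $\|\mathbf u\|_{H^s}$. Write $(\mathcal T+\tau)^{-\sigma}=(\mathcal T+\tau)^{-\frac{1+\sigma}{2}}(\mathcal T+\tau)^{\frac{1-\sigma}{2}}$ and distribute as follows:
\begin{itemize}
\item the first weight goes on $\|(a,\mathbf B)\|_{H^s}$, controlled by the damping part of $E_0$;
\item the second goes on $\|\nabla\mathbf u\|_{H^{s-1}}$, which is square integrable by $E_1$;
\item the non-decaying factor is taken in $L^\infty_t$ through $\sup_\tau\|(a,\mathbf B)\|_{H^{s-1}}\le E_1^{1/2}$.
\end{itemize}
This gives $E_0^{1/2}E_1$. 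It is how the paper handles $A_{1,2}$, $A_{1,3}$, the $\partial_t\mathbf u$ commutator and \eqref{est bnub hs}--\eqref{est bdub hs}. Your $E_0$-only split is valid only when the low factor is $\mathbf u$ or an oscillatory part.

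Your treatment of $\|\nabla\mathbf u\|_{L^\infty}\|(a,\mathbf B)\|_{H^s}^2$ through pointwise decay from $E_3$ is correct. Note that $E_3$ controls $H^{s-3}$, not $H^{s-2}$, though $H^{s-3}$ still embeds in $W^{1,\infty}$. It is a legitimate alternative to the paper's $E_0$--$E_2$ split, but it is not the hard case.
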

	\begin{proof}
		First, we establish the estimate for $(\mathcal{T}+\tau)^{-\sigma}\| a\|_{\dot{H}^{s}}^2$. Recall the equation for $a$:
		\begin{align*}
			\partial_t a +\mathbf{u}\cdot \nabla a =-\operatorname{div} { \mathbf{u}}-a\operatorname{div}\mathbf{u}.
		\end{align*}
		In order to eliminate the linear term, we perform the following weighted energy estimate:
		\begin{align*}
			\frac12\frac{d}{d\tau}&  \int_{\mathbb{T}^3}(\mathcal{T}+\tau)^{-\sigma}\frac{P^{\prime}(1+a)}{1+a}|\nabla^s a|^2dx\\
			=&(\mathcal{T}+\tau)^{-\sigma} \int_{\mathbb{T}^3}\frac{P^{\prime}(1+a)}{1+a}\nabla^sa\cdot\nabla^s\partial_{\tau}adx+\frac12(\mathcal{T}+\tau)^{-\sigma} \int_{\mathbb{T}^3}\partial_{\tau}\Big(\frac{P^{\prime}(1+a)}{1+a}\Big)|\nabla^sa|^2dx\\
			&-\frac{\sigma}{2}(\mathcal{T}+\tau)^{-1-\sigma} \int_{\mathbb{T}^3}\frac{P^{\prime}(1+a)}{1+a}|\nabla^sa|^2 dx\\
			=&:A_1+A_2+A_3. 
		\end{align*}
		Substituting the equation for $a$, we rewrite $A_1$ as follows:
		\begin{align*}
			A_1=&(\mathcal{T}+\tau)^{-\sigma} \int_{\mathbb{T}^3}\frac{P^{\prime}(1+a)}{1+a}\nabla^sa\cdot\Big(\nabla^s(-(1+a)(\operatorname{div}\mathbf{u}))\Big)dx \\
			&-(\mathcal{T}+\tau)^{-\sigma} \int_{\mathbb{T}^3}\frac{P^{\prime}(1+a)}{1+a}\nabla^sa\cdot\Big(\nabla^s(\mathbf{u}\cdot \nabla a)\Big)dx\\
			=&-(\mathcal{T}+\tau)^{-\sigma} \int_{\mathbb{T}^3}P^{\prime}(1+a)\nabla^sa\cdot\nabla^s(\operatorname{div}\mathbf{u})dx\\
			&- \sum_{l=1}^{s}\binom{s}{l}(\mathcal{T}+\tau)^{-\sigma}\int_{\mathbb{T}^3}\frac{P^{\prime}(1+a)}{1+a}\nabla^sa\cdot\Big(\nabla^{l}(1+a)\nabla^{s-l}(\operatorname{div}\mathbf{u})\Big)dx\\
			&- \sum_{l=1}^{s}\binom{s}{l}(\mathcal{T}+\tau)^{-\sigma}\int_{\mathbb{T}^3}\frac{P^{\prime}(1+a)}{1+a}\nabla^sa\cdot(\nabla^{l}\mathbf{u}\cdot\nabla\nabla^{s-l} a)dx\\
			&+\frac12(\mathcal{T}+\tau)^{-\sigma} \int_{\mathbb{T}^3}\nabla\cdot \Big(\frac{P^{\prime}(1+a)}{1+a}\mathbf{u}\Big)|\nabla^sa|^2dx\\
			=&:A_{1,1}+A_{1,2}+A_{1,3}+A_{1,4}.
		\end{align*}
		
		The first term $A_{1,1}$ will be eliminated by coupling it with the estimate for $\mathbf{u}$. By Proposition \ref{lem bftf} and the definition of $E_i(t)$, we have
		\begin{align*}
			\sup_{\tau\in[0,t]}\left\|\frac{P^{\prime}(1+a)}{1+a}\right\|_{H^{s-1}}\leq C(1+E_1^{1/2}(t)).
		\end{align*}
		Under the {\it a priori} bootstrap assumption $E_i(t) \ll 1$, the coefficient involving the pressure is strictly positive and uniformly bounded by a constant $C$. 
		
		Next, we estimate the term $A_{1,2}$. Utilizing Lemma \ref{lem product est} and the Sobolev embedding, we obtain
		\begin{align*}
			\int_{0}^tA_{1,2}d\tau= & - \sum_{l=1}^{s}\binom{s}{l}\int_{0}^t\int_{\mathbb{T}^3}(\mathcal{T}+\tau)^{-\sigma}\frac{P^{\prime}(1+a)}{1+a}\nabla^sa\cdot\Big(\nabla^{l}(1+a)\nabla^{s-l}(\operatorname{div}\mathbf{u})\Big)dxd\tau\\
			\lesssim & \int_{0}^t(\mathcal{T}+\tau)^{-\sigma}\| a \|_{H^{s}}(\| \nabla a\|_{L^{\infty}}\| \operatorname{div}\mathbf{u} \|_{H^{s-1}}+\| a \|_{H^{s}}\|\operatorname{div}\mathbf{u} \|_{L^{\infty}} )d\tau\\
			\lesssim & \sup_{\tau\in[0,t]}\|   a\|_{H^{3}}\int_{0}^t(\mathcal{T}+\tau)^{-\frac{1+\sigma}{2}}\|  a \|_{H^{s}}(\mathcal{T}+\tau)^{\frac{1-\sigma}{2}}\| \operatorname{div}\mathbf{u}\|_{H^{s-1}} d\tau\\
			&+\sup_{\tau\in[0,t]}(\mathcal{T}+\tau)^{-\frac{\sigma}{2}}\| a\|_{H^{s}}\int_{0}^t(\mathcal{T}+\tau)^{-\frac{1+\sigma}{2}}\| a \|_{H^{s}}(\mathcal{T}+\tau)^{\frac{1 }{2}}\|  \operatorname{div}\mathbf{u}\|_{H^{2}} d\tau\\
			\leq & E_0^{\frac12}(t)E_1(t)+E_0(t)E_2^{\frac{1}{2}}(t),
		\end{align*}
		where we have used the fact that $2-\sigma >1$  to control the time weights.
		
		Invoking standard product estimates to $A_{1,3}$, 
		\begin{align*}
			\int_{0}^tA_{1,3}d\tau =&- \sum_{l=1}^{s}\binom{s}{l}\int_{0}^t\int_{\mathbb{T}^3}(\mathcal{T}+\tau)^{-\sigma}\frac{P^{\prime}(1+a)}{1+a}\nabla^sa(\nabla^{l}\mathbf{u}\cdot\nabla\nabla^{s-l} a)dxd\tau\\
			\lesssim &\int_{0}^t (\mathcal{T}+\tau)^{-\sigma}\| a\|_{H^{s}}(\| \mathbf{u}\|_{H^{s}}\|\nabla a \|_{L^{\infty}}+\| \nabla \mathbf{u}\|_{L^{\infty}}\|a \|_{H^{s}} ) d\tau\\
			\lesssim& \sup_{\tau\in[0,t]}\|  a \|_{H^{3}}\int_{0}^t (\mathcal{T}+\tau)^{-\frac{1+\sigma}{2}}\| a\|_{H^{s}}(\mathcal{T}+\tau)^{\frac{1-\sigma}{2}}\| \nabla \mathbf{u}\|_{H^{s-1}} d\tau\\
			&+\sup_{\tau\in[0,t]}(\mathcal{T}+\tau)^{-\frac{\sigma}{2}}\| a \|_{H^{s}}\int_{0}^t (\mathcal{T}+\tau)^{-\frac{1+\sigma}{2}}\| a\|_{H^{s}}(\mathcal{T}+\tau)^{\frac{1}{2}}\| \nabla \mathbf{u}\|_{H^{2}} d\tau\\
			\lesssim& E_0^{\frac12}(t)E_1(t)+E_0(t)E_2^{\frac12}(t).
		\end{align*}
		
		Turning to $A_{1,4}$, integration by parts leads to
		\begin{align*}
			\int_{0}^tA_{1,4}d\tau =&\frac12\int_{0}^t\int_{\mathbb{T}^3}(\mathcal{T}+\tau)^{-\sigma}\nabla\cdot \Big(\frac{P^{\prime}(1+a)}{1+a}\mathbf{u}\Big)|\nabla^sa|^2dxd\tau\\
			\lesssim & \sup_{\tau\in[0,t]}(\mathcal{T}+\tau)^{-\frac{\sigma}{2}}\| a \|_{H^{s}}\int_{0}^t (1+\|a \|_{H^{3}})(\mathcal{T}+\tau)^{\frac12}\|\nabla \mathbf{u}\|_{H^{2}}(\mathcal{T}+\tau)^{-\frac{1+\sigma}{2}}\|a\|_{H^s}d\tau\\
			\lesssim & E_0(t)E_2^{\frac12}(t).
		\end{align*}
		
		To handle $A_2$, we utilize the continuity equation to replace $\partial_ta$. This allows us to deduce that
		\begin{align*}
			\int_{0}^tA_2d\tau&=\frac12\int_{0}^t\int_{\mathbb{T}^3}(\mathcal{T}+\tau)^{-\sigma} \partial_t\Big(\frac{P^{\prime}(1+a)}{1+a}\Big)|\nabla^sa|^2dxd\tau\\
			&= \frac12 \int_{0}^t\int_{\mathbb{T}^3}(\mathcal{T}+\tau)^{-\sigma} \Big(\frac{P^{\prime\prime}(1+a)}{1+a}-\frac{P^{\prime}(1+a)}{(1+a)^2}\Big)\\
			&\qquad\times\Big(-\operatorname{div} \mathbf{u}-\mathbf{u}\cdot \nabla a-a\operatorname{div}\mathbf{u} \Big)|\nabla^sa|^2dxd\tau\\
			&\lesssim  \sup_{\tau\in[0,t]}(\mathcal{T}+\tau)^{-\frac{\sigma}{2}}\| a \|_{H^{s}}\int_{0}^t(\mathcal{T}+\tau)^{\frac12}\| \nabla\mathbf{u}\|_{H^{2}}(1+\| a\|_{H^{3}})(\mathcal{T}+\tau)^{-\frac{1+\sigma}{2}}\|a\|_{H^s}d\tau\\
			&\lesssim  E_0(t)E_2^{\frac12}(t).
		\end{align*}
		Combining these estimates, we conclude that
		\begin{align*}
			\frac12 \int_{\mathbb{T}^3}&(\mathcal{T}+t)^{-\sigma}\frac{P^{\prime}(1+a)}{1+a}|\nabla^s a|^2dx+\frac{\sigma}{2} \int_{0}^t\int_{\mathbb{T}^3}(\mathcal{T}+\tau)^{-1-\sigma}\frac{P^{\prime}(1+a)}{1+a}|\nabla^sa|^2 dxd\tau-\int_{0}^tA_{1,1}d\tau\\
			\lesssim& \mathcal{T}^{-\sigma}\|a_0\|^2_{H^{s}}+E_0^{\frac32}(t)+E_1^{\frac32}(t)+E_2^{\frac32}(t).
		\end{align*}
		
		Recall that $\mathbf B$ satisfies
		\begin{align*}
			\partial_t\mathbf{B}+\mathbf{u}\cdot \nabla \mathbf{B}=\partial_{x_3} \mathbf{u}-\mathbf{e}_3\operatorname{div} \mathbf{u}+\mathbf{B}\cdot \nabla \mathbf{u}-\mathbf{B}\operatorname{div}\mathbf{u}.
		\end{align*}
		Invoking the Poincar\'e type inequality in Lemma \ref{lem poin}, it suffices to estimate the homogeneous norm $\|\mathbf{B} \|_{\dot{H}^s}$. Taking the $\dot{H}^{s}$ inner product of the above equation with $\mathbf{B}$ and multiplying by $(\mathcal T+\tau)^{-\sigma}$, we obtain
		\begin{align*}
			&\frac{1}{2}\frac{d}{d\tau}\Big((\mathcal{T}+\tau)^{-\sigma}\|  \mathbf{B} \|^2_{\dot{H}^{s}}\Big)+\frac{\sigma}{2}(\mathcal{T}+\tau)^{-1-\sigma}\| \mathbf{B} \|^2_{\dot{H}^{s}}\\
			&= \int_{\mathbb{T}^3}(\mathcal{T}+\tau)^{-\sigma}\nabla^s(\partial_{x_3}\mathbf{u}- \mathbf{e}_3\operatorname{div} \mathbf{u}) \cdot\nabla^s\mathbf{B} dx\\
			&\quad+ \int_{\mathbb{T}^3}(\mathcal{T}+\tau)^{-\sigma}\nabla^s(-\mathbf{u}\cdot \nabla \mathbf{B}+\mathbf{B}\cdot \nabla \mathbf{u}-\mathbf{B}\operatorname{div}\mathbf{u} )\cdot\nabla^s\mathbf{B} dx.
		\end{align*}
		The linear terms will be eliminated by coupling them with the estimate for $\mathbf{u}$ in the subsequent analysis.
		
		Applying the product estimate in Lemma \ref{lem product est} and integrating by parts, we can estimate the first of the nonlinear terms as follows:
		\begin{align*}
			\int_{0}^t&\int_{\mathbb{T}^3}(\mathcal{T}+\tau)^{-\sigma}\nabla^s(-\mathbf{u}\cdot \nabla \mathbf{B})\cdot\nabla^s\mathbf{B} dxd\tau\\
			=& \int_{0}^t \int_{\mathbb{T}^3}\frac12(\mathcal{T}+\tau)^{-\sigma}(\operatorname{div}\mathbf{u})  \nabla^s\mathbf{B} \cdot\nabla^s\mathbf{B} dxd\tau\\
			&- \sum_{l=1}^s\binom{s}{l}\int_{0}^t\int_{\mathbb{T}^3}(\mathcal{T}+\tau)^{-\sigma}(\nabla^l\mathbf{u}\cdot \nabla\nabla^{s-l} \mathbf{B})\cdot\nabla^s\mathbf{B} dxd\tau \\
			\lesssim& \int_{0}^t (\mathcal{T}+\tau)^{-\sigma}\|\operatorname{div}\mathbf{u} \|_{L^{\infty}}  \|\mathbf{B}  \|^2_{H^{s}} d\tau+\int_{0}^t(\mathcal{T}+\tau)^{-\sigma}(\|\mathbf{u} \|_{H^{s}}\|\mathbf{B} \|_{H^{3}}+\|  \mathbf{u} \|_{H^{3}}\| \mathbf{B}\|_{H^{s}}) \|\mathbf{B}\|_{H^{s}}d\tau\\
			\lesssim &  \sup_{\tau\in[0,t]}(\mathcal{T}+\tau)^{-\frac{\sigma}{2}}\|\mathbf{B} \|_{H^{s}}\int_{0}^t (\mathcal{T}+\tau)^{\frac{-1-\sigma}{2}}\|\mathbf{B} \|_{H^{s}} (\mathcal{T}+\tau)^{\frac{1}{2}} \|\nabla\mathbf{u} \|_{H^{2}} d\tau\\
			&+ \sup_{\tau\in[0,t]} \|\mathbf{B} \|_{H^{3}}\int_{0}^t (\mathcal{T}+\tau)^{\frac{-1-\sigma}{2}}\|\mathbf{B} \|_{H^{s}} (\mathcal{T}+\tau)^{\frac{1-\sigma}{2}}\|\mathbf{u} \|_{H^{s}} d\tau\\
			\lesssim & E_0(t)E_2^{\frac12}(t)+E_0^{\frac12}(t)E_1(t).
		\end{align*}
		For brevity, write $\mathbf{B}\nabla\mathbf{B}=\frac12\nabla|\mathbf{B}|^2$ for the magnetic pressure term. For the nonlinear term $\mathbf{B}\cdot \nabla \mathbf{u}-\mathbf{B}\operatorname{div}\mathbf{u}$, the highest-order (i.e., $(s+1)$-th) derivative cancels with those from $\mathbf{B}\cdot\nabla \mathbf{B}$ and $\mathbf{B}\nabla \mathbf{B}$ in the estimate for $\mathbf{u}$, which leads to the following identities:
		\begin{align*}
			\int_{0}^t&\int_{\mathbb{T}^3}(\mathcal{T}+\tau)^{-\sigma}\Big((\mathbf{B}\cdot \nabla\nabla^s \mathbf{u})\cdot\nabla^s\mathbf{B}+ (\mathbf{B}\cdot \nabla\nabla^s \mathbf{B})\cdot\nabla^s\mathbf{u}\Big) dxd\tau=0, 
		\end{align*}
		and
		\begin{align*}
			\int_{0}^t&\int_{\mathbb{T}^3}(\mathcal{T}+\tau)^{-\sigma}\Big((\mathbf{B}\nabla^s\operatorname{div}\mathbf{u})\cdot\nabla^s\mathbf{B}+(\mathbf{B}\nabla\nabla^s\mathbf{B})\cdot\nabla^s\mathbf{u}\Big) dxd\tau\\
			&=-\int_{0}^t\int_{\mathbb{T}^3}(\mathcal{T}+\tau)^{-\sigma}\nabla^s\mathbf{u}\cdot\nabla\mathbf{B}\cdot\nabla^s\mathbf{B} dxd\tau.
		\end{align*}
		Consequently, we deduce the following estimates:
		\begin{align}\label{est bnub hs}
			\int_{0}^t&\int_{\mathbb{T}^3}(\mathcal{T}+\tau)^{-\sigma}(\nabla^s(\mathbf{B}\cdot \nabla \mathbf{u})\cdot\nabla^s\mathbf{B}+ \nabla^s(\mathbf{B}\cdot \nabla \mathbf{B})\cdot\nabla^s\mathbf{u}) dxd\tau\\
			=& \sum_{l=1}^s\binom{s}{l}\int_{0}^t\int_{\mathbb{T}^3}(\mathcal{T}+\tau)^{-\sigma}\big[(\nabla^{l}\mathbf{B}\cdot \nabla \nabla^{s-l}\mathbf{u})\cdot\nabla^s\mathbf{B}+ (\nabla^{l}\mathbf{B}\cdot \nabla \nabla^{s-l}\mathbf{B})\cdot\nabla^s\mathbf{u}\big] dxd\tau\nonumber\\
			\lesssim& \int_{0}^t(\mathcal{T}+\tau)^{-\sigma}(\|\nabla\mathbf{u} \|_{H^{s-1}}\|\mathbf{B} \|_{H^{3}}+\|  \mathbf{u} \|_{H^{3}}\| \mathbf{B}\|_{H^{s}}) \| \mathbf{B}\|_{H^{s}}d\tau\nonumber\\
			&+\int_{0}^t(\mathcal{T}+\tau)^{-\sigma}\|  \mathbf{B}\|_{H^{3}}\|\mathbf{B} \|_{H^{s}} \|\mathbf{u}\|_{H^{s}}d\tau\nonumber\\
			\lesssim &   \sup_{\tau\in[0,t]}\|\mathbf{B} \|_{H^{3}}\int_{0}^t (\mathcal{T}+\tau)^{\frac{-1-\sigma}{2}}\|\mathbf{B} \|_{H^{s}}  (\mathcal{T}+\tau)^{\frac{1-\sigma}{2}}\|\nabla\mathbf{u} \|_{H^{s-1}} d\tau\nonumber\\
			&+ \sup_{\tau\in[0,t]}(\mathcal{T}+\tau)^{-\frac{\sigma}{2}}\|\mathbf{B} \|_{H^{s}}\int_{0}^t (\mathcal{T}+\tau)^{\frac{-1-\sigma}{2}}\|\mathbf{B} \|_{H^{s}} (\mathcal{T}+\tau)^{\frac{1}{2}} \|\nabla\mathbf{u} \|_{H^{2}} d\tau\nonumber
			\\
			\lesssim &E_0^{\frac12}(t)E_1(t)+ E_0(t)E_2^{\frac12}(t),\nonumber
		\end{align}
		and
		\begin{align}\label{est bdub hs}
			- \int_{0}^t&\int_{\mathbb{T}^3}(\mathcal{T}+\tau)^{-\sigma}(\nabla^s(\mathbf{B}\operatorname{div}\mathbf{u})\cdot\nabla^s\mathbf{B}+\nabla^s(\mathbf{B}\nabla\mathbf{B})\cdot\nabla^s\mathbf{u} )dxd\tau\\
			=& -\sum_{l=1}^s\binom{s}{l}\int_{0}^t\int_{\mathbb{T}^3}(\mathcal{T}+\tau)^{-\sigma}(\nabla^{l}\mathbf{B}\nabla^{s-l}\operatorname{div}\mathbf{u})\cdot\nabla^s\mathbf{B}dxd\tau \nonumber\\
			&- \sum_{l=1}^{s}\binom{s}{l} \int_{0}^t\int_{\mathbb{T}^3}(\mathcal{T}+\tau)^{-\sigma}(\nabla^l\mathbf{B}\nabla \nabla^{s-l}\mathbf{B})\cdot\nabla^s\mathbf{u} dxd\tau\nonumber\\
			&+\int_{0}^t\int_{\mathbb{T}^3}(\mathcal{T}+\tau)^{-\sigma} \nabla^s\mathbf{u}\cdot \nabla \mathbf{B}\cdot \nabla^s\mathbf{B}  dxd\tau\nonumber\\
			\lesssim& \int_{0}^t(\mathcal{T}+\tau)^{-\sigma}(\|\mathbf{u} \|_{H^{s}}\|\nabla\mathbf{B} \|_{L^{\infty}}+\|\operatorname{div} \mathbf{u} \|_{L^{\infty}}\| \mathbf{B}\|_{H^{s}}) \|\mathbf{B}\|_{H^{s}}d\tau\nonumber\\
			&+\int_{0}^t(\mathcal{T}+\tau)^{-\sigma}(\| \mathbf{B} \|_{H^{s}}\| \nabla\mathbf{B}\|_{L^{\infty}}+\| \mathbf{B} \|_{H^{s-1}}\| \nabla^2\mathbf{B}\|_{L^{\infty}}) \|\mathbf{u}\|_{H^{s}}d\tau\nonumber\\
			\lesssim &   \sup_{\tau\in[0,t]}\|\mathbf{B} \|_{H^{4}}\int_{0}^t (\mathcal{T}+\tau)^{\frac{-1-\sigma}{2}}\|\nabla^s\mathbf{B} \|_{L^{2}} (\mathcal{T}+\tau)^{\frac{1-\sigma}{2}} \|\nabla\mathbf{u} \|_{H^{s-1}} d\tau\nonumber\\
			&+  \sup_{\tau\in[0,t]}(\mathcal{T}+\tau)^{-\frac{\sigma}{2}}\|\mathbf{B} \|_{H^{s}}\int_{0}^t (\mathcal{T}+\tau)^{\frac{-1-\sigma}{2}}\|\nabla^s\mathbf{B} \|_{L^{2}} (\mathcal{T}+\tau)^{\frac{1}{2}} \|\nabla\mathbf{u} \|_{H^{2}} d\tau\nonumber\\
			\lesssim &E_0^{\frac12}(t)E_1(t)+ E_0(t)E_2^{\frac12}(t).\nonumber
		\end{align}
		Recall the equation for $\mathbf{u}$:
		\begin{align*}
			\rho \partial_t\mathbf{u}+\rho \mathbf{u}\cdot \nabla \mathbf{u}-\mu\Delta \mathbf{u}-(\lambda+\mu)\nabla \operatorname{div}\mathbf{u}+\nabla P(1+a)=\partial_{x_3}\mathbf{B}-\nabla B_3+\mathbf{B}\cdot\nabla \mathbf{B}- \frac12\nabla |\mathbf{B}|^2.
		\end{align*} 
		Taking the $\dot{H}^s$ inner product with $\mathbf{u}$ and integrating by parts, we obtain
		\begin{align*}
			\frac12\frac{d}{d\tau}& \int_{\mathbb{T}^3} \Big[(\mathcal{T}+\tau)^{-\sigma}\rho|\nabla^s\mathbf{u} |^2\Big]dx -\frac{1}{2} \int_{\mathbb{T}^3}\partial_{\tau}((\mathcal{T}+\tau)^{-\sigma}\rho)|\nabla^s\mathbf{u} |^2dx\\
			&+ \int_{\mathbb{T}^3}(\mathcal{T}+\tau)^{-\sigma}[\nabla^s,a]\partial_{\tau}\mathbf{u}\cdot \nabla^s\mathbf{u} dx +
			\mu(\mathcal{T}+\tau)^{-\sigma}\|\nabla \mathbf{u} \|^2_{\dot{H}^{s}}+
			(\lambda+\mu)(\mathcal{T}+\tau)^{-\sigma}\| \operatorname{div}  \mathbf{u} \|^2_{\dot{H}^{s}}
			\\
			=& \int_{\mathbb{T}^3}(\mathcal{T}+\tau)^{-\sigma}\Big[\nabla^s(\partial_{x_3}\mathbf{B}+\mathbf{B}\cdot\nabla \mathbf{B})-\nabla \nabla^s(P(1+a)+B_3+\frac12|\mathbf{B}|^2)\Big]\cdot\nabla^s\mathbf{u}dx\\
			&- \int_{\mathbb{T}^3}(\mathcal{T}+\tau)^{-\sigma}\nabla^s\big(\rho(\mathbf{u}\cdot\nabla \mathbf{u})\big)\cdot\nabla^s\mathbf{u}dx.
		\end{align*}
		
		To estimate the second term on the left-hand side, we utilize the continuity equation for $\rho$ to replace its time derivative. Integrating over time, it follows that
		\begin{align*}
			-\frac{1}{2} \int_{0}^t&\int_{\mathbb{T}^3}\partial_{\tau}((\mathcal{T}+\tau)^{-\sigma}\rho)|\nabla^s\mathbf{u} |^2dxd\tau 
			\\
			= & \frac{1}{2} \int_{0}^t\int_{\mathbb{T}^3}(\sigma(\mathcal{T}+\tau)^{-1-\sigma}\rho-(\mathcal{T}+\tau)^{-\sigma}\partial_{\tau}\rho)|\nabla^s\mathbf{u} |^2dxd\tau\\
			\lesssim  &\sup_{\tau\in[0,t]}(1+\| a\|_{L^{\infty}})\mathcal{T}^{-1} \int_{0}^t(\mathcal{T}+\tau)^{ -\sigma}\|\nabla\mathbf{u} \|^2_{H^{s }}d\tau\\
			&+\int_{0}^t(\mathcal{T}+\tau)^{-\sigma}\|\nabla\mathbf{u} \|^2_{H^{s-1}} \|\operatorname{div}\mathbf{u}+\mathbf{u}\cdot \nabla a+a\operatorname{div}\mathbf{u} \|_{L^{\infty}}d\tau\\
			\lesssim & \mathcal{T}^{-1} \int_{0}^t(\mathcal{T}+\tau)^{ -\sigma}\|\nabla\mathbf{u} \|^2_{H^{s }}d\tau\\
			&+\sup_{\tau\in[0,t]} (1+ \| a\|_{H^{3}})\| \mathbf{u}\|_{H^{3}} \int_{0}^t(\mathcal{T}+\tau)^{ -\sigma}\|\nabla\mathbf{u} \|^2_{H^{s-1}}d\tau.
		\end{align*}
		The first term on the right-hand side can be absorbed into the corresponding dissipation term, provided that  $\mathcal{T}\geq \frac{2C\sigma}{\mu}$, where $C$ is the constant in the preceding line.
		
		For the commutator term, the standard commutator estimate in Lemma \ref{lem commu} gives
		\begin{align*}
			\int_{0}^t\int_{\mathbb{T}^3}&(\mathcal{T}+\tau)^{-\sigma}[\nabla^s,a]\partial_{\tau}\mathbf{u}\cdot \nabla^s\mathbf{u} dxd\tau \\
			\lesssim & \int_{0}^t(\mathcal{T}+\tau)^{-\sigma}(\|a \|_{H^{s}}\|\partial_t\mathbf{u} \|_{L^{\infty}}+\|\nabla \rho \|_{L^{\infty}}\|\partial_{\tau} \mathbf{u} \|_{H^{s-1}} ) \| \mathbf{u} \|_{H^{s}}  d\tau.
		\end{align*}
		Utilizing the momentum equation, we can bound the time derivative $\partial_t\mathbf{u}$ as follows:
		\begin{align}\label{est dtu hk}
			\|\partial_{\tau} \mathbf{u}\|_{H^{k}}\lesssim &\| \mathbf{u}\cdot \nabla \mathbf{u} \|_{H^{k}}+ \Big\|\frac{1}{1+a} \Delta \mathbf{u}\Big\|_{H^{k}}+\Big\|\frac{1}{1+a} \nabla \operatorname{div}\mathbf{u}\Big\|_{H^{k}}\\
			&+\Big\|\frac{1}{1+a}(-\nabla P(1+a)+\partial_{x_3}\mathbf{B} -\nabla B_3 +\mathbf{B}\cdot\nabla \mathbf{B}-\frac12\nabla|\mathbf{B}|^2 )\Big\|_{H^{k}}\nonumber\\
			\lesssim & \|\mathbf{u} \|_{H^{3}}\| \nabla\mathbf{u}\|_{H^{k}}+(1+\|a \|_{H^{3}})\|\nabla \mathbf{u} \|_{H^{k+1}}+\|a \|_{H^{k}}\|\mathbf{u} \|_{H^{4}}\nonumber\\
			&+(1+\| a\|_{H^{3}})\|a \|_{H^{k+1}}+\|\mathbf{B} \|_{H^{k+1}}+\|\mathbf{B} \|_{H^{3}}\|\mathbf{B} \|_{H^{k+1}}.\nonumber
		\end{align}
		Invoking the above estimate, it follows that
		\begin{align*}
			\sum_{k=1}^s&\int_{0}^t\int_{\mathbb{T}^3}(\mathcal{T}+\tau)^{-\sigma}[\nabla^k,a]\partial_{\tau}\mathbf{u}\cdot \nabla^k\mathbf{u} dxd\tau \\
			\lesssim & \int_{0}^t(\mathcal{T}+\tau)^{-\sigma}\|a \|_{H^{s}}(\|\mathbf{u} \|^2_{H^{3}}+\|\mathbf{u} \|_{H^{4}}+\|a \|^2_{H^{3}}+\| \mathbf{B}\|_{H^{3}} )\|\nabla\mathbf{u} \|_{H^{s-1}}  d\tau\\
			&+\int_{0}^t(\mathcal{T}+\tau)^{-\sigma}\| a \|_{H^{3}}(\|\mathbf{u} \|_{H^{3}}\|\nabla \mathbf{u} \|_{H^{s-1}}+\|\nabla \mathbf{u} \|_{H^{s}}+\| a\|_{H^{3}}\|\nabla \mathbf{u} \|_{H^{s}}+\| a\|_{H^{s-1}}\|\mathbf{u} \|_{H^{4}}\\
			&\qquad\qquad\qquad\qquad\qquad+(1+\| a\|_{H^{3}})\| a\|_{H^{s}}+\| \mathbf{B}\|_{H^{s}}+\|\mathbf{B} \|_{H^{3}}\| \mathbf{B}\|_{H^{s}} ) \|\nabla\mathbf{u} \|_{H^{s-1}}  d\tau\\
			\lesssim & \sup_{\tau\in[0,t]} \|a,\mathbf{u},\mathbf{B} \|_{H^{s-1}} \int_{0}^t (\mathcal{T}+\tau)^{\frac{-1-\sigma}{2}}\|(a,\nabla\mathbf{u},\mathbf{B}) \|_{H^{s}}(\mathcal{T}+\tau)^{\frac{1-\sigma}{2}}\|\nabla \mathbf{u} \|_{H^{s-1}} d\tau\\
			\lesssim &E_0^{\frac12}(t)E_1(t).
		\end{align*}
		The linear terms  $\partial_{x_3}\mathbf{B}-\nabla B_3$ on the right-hand side cancel out with the corresponding terms in the linear part of the equation for $\mathbf{B}$. Furthermore, the nonlinear terms $\mathbf{B}\cdot\nabla \mathbf{B}$ and $\frac12\nabla |\mathbf{B}|^2$ have already been treated in \eqref{est bnub hs} and \eqref{est bdub hs}. For the pressure term, invoking Fa\`a di Bruno formula and combining it with $A_{1,1}$, we have
		\begin{align*}
			- &\int_0^t\int_{\mathbb{T}^3}(\mathcal{T}+\tau)^{-\sigma}\nabla \nabla^s(P(1+a))\cdot\nabla^s\mathbf{u}dxd\tau+\int_0^t A_{1,1}d\tau\\
			=& \int_0^t\int_{\mathbb{T}^3}(\mathcal{T}+\tau)^{-\sigma}  \nabla^{s-1}(P^{\prime}(1+a)\nabla a)\cdot\nabla^s\operatorname{div}\mathbf{u}dxd\tau\\
			&- \int_0^t\int_{\mathbb{T}^3}(\mathcal{T}+\tau)^{-\sigma}P^{\prime}(1+a)\nabla^sa\cdot\nabla^s(\operatorname{div}\mathbf{u})dxd\tau\\
			= & -\sum_{l=1}^{s-1}\binom{s-1}{l}\int_0^t\int_{\mathbb{T}^3}(\mathcal{T}+\tau)^{-\sigma}  \nabla(\nabla^l(P^{\prime}(1+a))\nabla^{s-l } a)\cdot\nabla^{s-1}\operatorname{div}\mathbf{u}dxd\tau\\
			\lesssim & \sup_{\tau\in[0,t]}\|a \|_{H^{s-1}}(1+\| a\|^{s-1}_{H^{s-1}})\int_{0}^t(\mathcal{T}+\tau)^{\frac{-1-\sigma}{2}}\| a\|_{H^{s}}(\mathcal{T}+\tau)^{\frac{1-\sigma}{2}}\| \operatorname{div}\mathbf{u}\|_{H^{s-1}} d\tau\\
			\lesssim& E_0^{\frac12}(t)E_1(t).
		\end{align*}
		For the convection term,  we have
		\begin{align*}
			-\int_{0}^t&\int_{\mathbb{T}^3}(\mathcal{T}+\tau)^{-\sigma}\nabla^s\big(\rho(\mathbf{u}\cdot\nabla \mathbf{u})\big)\cdot\nabla^s\mathbf{u}dxd\tau\\
			=&-\sum_{l=0}^s\binom{s}{l}\int_{0}^t\int_{\mathbb{T}^3}(\mathcal{T}+\tau)^{-\sigma}\rho(\nabla^l\mathbf{u}\cdot\nabla\nabla^{s-l} \mathbf{u})\cdot\nabla^s\mathbf{u}dxd\tau\\
			&-\int_{0}^t\int_{\mathbb{T}^3}(\mathcal{T}+\tau)^{-\sigma}[\nabla^s,a](\mathbf{u}\cdot\nabla \mathbf{u})\cdot\nabla^s\mathbf{u}dxd\tau\\
			\lesssim & \sup_{\tau\in[0,t]}\| \mathbf{u}\|_{H^{s-1}}\int_{0}^t(\mathcal{T}+\tau)^{-\sigma}\| \nabla \mathbf{u}\|_{H^{s}}^2 d\tau\\
			&+\sup_{\tau\in[0,t]}(\| a\|_{H^{s}}\| \mathbf{u}\|_{H^{s-2}}+\| a\|_{H^{3}}\| \mathbf{u}\|_{H^{s-1}})\int_{0}^t(\mathcal{T}+\tau)^{-\sigma}\| \nabla \mathbf{u}\|_{H^{s}}^2 d\tau\\
			\lesssim & E_0(t)E_1^{\frac12}(t),
		\end{align*}
		where we used the fact that $s-1\geq 3$ and $\sigma <1$.
		Combining these estimates and invoking the Poincar\'e type inequality from Lemma \ref{lem poin}, we conclude that
		\begin{align*}
			E_0(t)=&\sup_{\tau\in[0,t]}(\mathcal{T}+\tau)^{-\sigma}\|(a,\mathbf{u},\mathbf{B}) \|^2_{{H}^{s}}+\int_{0}^t(\mathcal{T}+\tau)^{-\sigma}\|\nabla\mathbf{u} \|^2_{{H}^{s}} d\tau\\
			&+\int_{0}^{t}(\mathcal{T}+\tau)^{-1-\sigma}\| (a,\mathbf{B})(\tau,\cdot)\|^2_{H^{s}}d\tau\\
			\lesssim& \mathcal{T}^{-\sigma}\| (a_0,\mathbf{u}_0,\mathbf{B}_0)\|^2_{H^{s}}+ (E_0(t)+E_1(t)+E_2(t))^{\frac32}.
		\end{align*}
	\end{proof}

	\subsection{Estimate of $\overline{E}_1(t)$}
	This subsection treats the averaged components $(\overline a,\overline{\mathbf{u}},\overline{\mathbf{B}})$, for which the $x_3$-Poincar\'e mechanism is unavailable. As outlined in the introduction, $\overline a$ and $\overline{\mathbf{B}}$ admit no time-decay weights. Instead, we introduce  the averaged quantity $\overline{\mathcal D}$ to recover hidden damping, assigning the time weight only to $(\overline{\mathbf{u}},\overline{\mathcal D})$. The uncancelled linear couplings in the $\overline a$ and $\overline{\mathbf{B}}$ equations are then controlled by exploiting the shifted factor $(\mathcal T+t)$ to make them perturbatively small.
	\begin{lem}\label{est bE1}
		Under the assumptions of Theorem \ref{thm1} and assumption \eqref{ass}, we have
		\begin{align*}
			\overline{E}_1(t)\lesssim \mathcal{T}^{1-\sigma}\|(a_0,\mathbf{u}_0,\mathbf{B}_0) \|_{H^{s-1}}^2+\mathcal{T}^{\frac{2\sigma-1}{2}}E_0^{\frac12}(t)E_2^{\frac12}(t)+E_{total}^{\frac32}(t).
		\end{align*}
	\end{lem}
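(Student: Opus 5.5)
We average the system in $x_3$ and estimate the three groups of quantities in $\overline{E}_1$ separately. These are: the unweighted norms of $(\overline a,\overline{\mathbf B})$ in $H^{s-1}$; the weighted energy and dissipation of $\overline{\mathbf u}$; and the weighted norm and damping of $\overline{\mathcal D}$ in $\dot H^{s-1}$. We then combine the three estimates with suitable small coefficients.

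\textbf{Step 1: equations and the $\overline{\mathbf u}$ estimate.} By the symmetry \eqref{as symmetry}, $\overline{u}_3=0$ and $\overline{\mathbf B}_h=0$. Averaging \eqref{equ pcmhd} with $\mathcal D$ from \eqref{def D} gives \eqref{eq Dbar}, where $\overline{F_{\mathbf u}}$ and $\overline{F_{\mathcal D}}$ are averages of quadratic terms. The pieces of these forcings involving $\widetilde{\cdot}$ factors decay by $\widetilde{E}_i$ and $N_i$. The pieces built only from averaged factors contain at least one $\overline{\mathbf u}$ or $\overline{\mathcal D}$. For the Lorentz force we use $\mathbf B\cdot\nabla\mathbf B=\widetilde{\mathbf B}_h\cdot\nabla_h\mathbf B+B_3\partial_{x_3}\mathbf B$, whose average involves only oscillatory factors.

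Take the $\dot H^{s-1}$ (and $L^2$) inner product of \eqref{eq Dbar}$_1$ with $\overline{\mathbf u}$, weighted by $(\mathcal T+\tau)^{1-\sigma}$. The viscous term yields $\int(\mathcal T+\tau)^{1-\sigma}\|\nabla\overline{\mathbf u}\|^2_{H^{s-1}}$. The time derivative of the weight produces $(\mathcal T+\tau)^{-\sigma}\|\overline{\mathbf u}\|^2_{H^{s-1}}$, which is absorbed by Poincaré (Lemma \ref{lem poin}, using the zero mean of $\rho\mathbf u$) once $\mathcal T$ is large.

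The pressure coupling $-\int\nabla\overline{\mathcal D}\cdot\overline{\mathbf u}$ has no counterpart to cancel against. We bound it by $\eta\|\overline{\mathcal D}\|^2_{\dot H^{s-1}}+C_\eta\|\nabla\overline{\mathbf u}\|^2_{H^{s-1}}$, or better, pair it with $\overline{\mathcal D}$ as in a symmetrizer. The weight $\tfrac{1}{2}$ in front of $\overline{\mathcal D}$ makes $\tfrac12\|\overline{\mathcal D}\|^2+\|\overline{\mathbf u}\|^2$ cancel the linear coupling exactly, because $\partial_t\overline{\mathcal D}=-2\operatorname{div}\overline{\mathbf u}+\dots$. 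So we also estimate $\tfrac14(\mathcal T+\tau)^{1-\sigma}\|\overline{\mathcal D}\|_{\dot H^{s-1}}^2$ together with $\overline{\mathbf u}$.

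\textbf{Step 2: hidden damping of $\overline{\mathcal D}$.} We differentiate the cross term $-(\mathcal T+\tau)^{1-\sigma}\int\nabla^{s-2}\overline{\mathbf u}\cdot\nabla^{s-1}\nabla\overline{\mathcal D}$, suitably indexed so that one obtains $\|\nabla^{s-1}\overline{\mathcal D}\|^2$. Substituting \eqref{eq Dbar}$_1$ gives $+\|\overline{\mathcal D}\|^2_{\dot H^{s-1}}$ with a good sign, plus viscous terms of the form $\nu\int\nabla^{s}\operatorname{div}\overline{\mathbf u}\,\nabla^{s-1}\overline{\mathcal D}$. These are bounded by $\tfrac14\|\overline{\mathcal D}\|^2_{\dot H^{s-1}}+C\|\nabla\overline{\mathbf u}\|^2_{\dot H^{s-1}}$, and it is precisely here that the non-parabolic damping loses no derivatives. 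Substituting \eqref{eq Dbar}$_2$ gives $2\|\operatorname{div}\overline{\mathbf u}\|^2$, which is again controlled by the $\overline{\mathbf u}$ dissipation. The weight derivative $(\mathcal T+\tau)^{-\sigma}|\overline{\mathbf u}||\overline{\mathcal D}|$ is small by $\mathcal T^{-1}$. Adding a small multiple of this cross term to Step 1 closes the weighted part of $\overline{E}_1$. The nonlinear terms contribute $E_{total}^{3/2}$, with initial data $\mathcal T^{1-\sigma}\|(a_0,\mathbf{u}_0,\mathbf{B}_0)\|^2$.

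\textbf{Step 3: unweighted $(\overline a,\overline{B}_3)$ (main obstacle).} The $H^{s-1}$ energy estimate for the averaged $a$ and $B_3$ equations has linear couplings $-\int\nabla^{s-1}\operatorname{div}\overline{\mathbf u}\cdot\nabla^{s-1}(\overline a,\overline{B}_3)$. These carry no time weight and do not cancel against the weighted $\overline{\mathbf u}$ estimate. We bound them by
\[
\int_0^t(\mathcal T+\tau)^{\frac{1+\sigma}{2}}(\mathcal T+\tau)^{-\frac{1+\sigma}{2}}\|(\overline a,\overline{\mathbf B})\|_{H^{s}}\,\|\nabla\overline{\mathbf u}\|_{H^{s-1}}\,d\tau .
\]
Since $(\mathcal T+\tau)^{\frac{1+\sigma}{2}}\cdot(\mathcal T+\tau)^{-\frac{2-\sigma}{2}}=(\mathcal T+\tau)^{\frac{2\sigma-1}{2}}\le\mathcal T^{\frac{2\sigma-1}{2}}$, Cauchy–Schwarz against the $E_0$ integral and the $E_2$-weighted $\overline{\mathbf u}$ dissipation yields $\mathcal T^{\frac{2\sigma-1}{2}}E_0^{1/2}E_2^{1/2}$. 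This is where $\sigma<1/2$ enters.

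The nonlinear transport terms, such as $\overline{\mathbf u}\cdot\nabla\overline a$, are handled by commutator estimates (Lemma \ref{lem commu}). Terms with a top-order derivative on $\overline{\mathbf u}$ use the integrable decay of $\overline{\mathbf u}$ in $E_2,E_3$. Products of oscillatory factors are handled via Proposition \ref{lem bftf}. We expect the delicate bookkeeping to be in making every time weight integrable using only $(\overline a,\overline{\mathbf B})$ boundedness, which is where we anticipate the main difficulty. Summing Steps 1–3 gives the lemma.
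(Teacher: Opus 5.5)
Your proposal is correct and follows the paper's proof essentially step for step. It has the same three ingredients:
\begin{itemize}
\item the symmetrized weighted estimate for $\|\overline{\mathbf u}\|^2_{\dot H^{s-1}}+\frac12\|\overline{\mathcal D}\|^2_{\dot H^{s-1}}$, in which the linear coupling cancels exactly;
\item the cross term $(\mathcal T+\tau)^{1-\sigma}\int\nabla^{s-2}\overline{\mathbf u}\cdot\nabla^{s-2}\nabla\overline{\mathcal D}\,dx$, which extracts the damping of $\overline{\mathcal D}$;
\item for the unweighted $(\overline a,\overline{\mathbf B})$ estimates, one integration by parts followed by Cauchy--Schwarz between the $E_0$ integral and the $E_2$ dissipation, producing the factor $\mathcal T^{\frac{2\sigma-1}{2}}$.
\end{itemize}

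A few small corrections:
\begin{itemize}
\item In Step 3 the velocity factor should be $\|\nabla\overline{\mathbf u}\|_{H^{s-2}}$, not $H^{s-1}$; otherwise the weights do not match $E_2$.
\item The weight-derivative term $(\mathcal T+\tau)^{-\sigma}\|\overline{\mathcal D}\|^2_{\dot H^{s-1}}$ from Step 1 cannot be absorbed by Poincar\'e. It is absorbed by the Step 2 damping, which means you must first fix the small multiple $\gamma$ of the cross term and only then take $\mathcal T\gg\gamma^{-1}$.
\item The top-order transport terms are closed by the same $E_0$--$E_2$ pairing as the linear coupling, not by boundedness of $(\overline a,\overline{\mathbf B})$ alone.
\end{itemize}
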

	\begin{proof}
		Averaging the continuity equation for $a$ over the  $x_3$-directions, we obtain
		\begin{align}\label{equ ba}
			\partial_t\overline{a}=-\operatorname{div}\overline{\mathbf{u}}-\overline{\mathbf{u}\cdot \nabla a}-\overline{a\operatorname{div}\mathbf{u}}.
		\end{align}
		Taking the $\dot{H}^{s-1}$ inner product of this equation with $\overline{a}$ gives
		\begin{align*}
			\frac12\frac{d}{d\tau} \| \overline{a}  \|^2_{\dot{H}^{s-1}}=&-\int_{\mathbb{T}^3}\nabla^{s-1}(\overline{(1+a) \operatorname{div}\mathbf{u}})\cdot\nabla^{s-1}\overline{a} dx- \int_{\mathbb{T}^3}\nabla^{s-1}(\overline{\mathbf{u}\cdot \nabla a})\cdot\nabla^{s-1}\overline{a} dx\\
			=&-\int_{\mathbb{T}^3}\overline{(1+a)\nabla^{s-1}\operatorname{div}\mathbf{u}}\cdot\nabla^{s-1}\overline{a} dx\\
			&- \sum_{l=1}^{s-1}\binom{s-1}{l}\int_{\mathbb{T}^3} \overline{\nabla^{l}(1+a)\nabla^{s-1-l}\operatorname{div}\mathbf{u}}\cdot\nabla^{s-1}\overline{a} dx \\
			&- \int_{\mathbb{T}^3}\nabla^{s-1}(\overline{\mathbf{u}\cdot \nabla a})\cdot\nabla^{s-1}\overline{a} dx.
		\end{align*}
		
		To bound the first term on the right-hand side, we integrate by parts to shift one derivative, which yields
		\begin{align*}
			-\int_{0}^t&\int_{\mathbb{T}^3}\overline{(1+a)\nabla^{s-1}\operatorname{div}\mathbf{u}}\cdot\nabla^{s-1}\overline{a} dxd\tau\\
			= &\int_{0}^t\int_{\mathbb{T}^3}\overline{\nabla a\nabla^{s-2}\operatorname{div}\mathbf{u}}\cdot\nabla^{s-1}\overline{a}+\overline{(1+a)\nabla^{s-2}\operatorname{div}\mathbf{u}} \cdot\nabla^{s}\overline{a}  dxd\tau\\
			\lesssim & \mathcal{T}^{\frac{2\sigma-1}{2}}\sup_{\tau\in[0,t]}(1+\|a \|_{H^{3}})\int_{0}^t (\mathcal{T}+\tau)^{\frac{2-\sigma}{2}}\|\operatorname{div}\mathbf{u} \|_{H^{s-2}}(\mathcal{T}+\tau)^{\frac{-1-\sigma}{2}}\|a \|_{H^{s}} d \tau\\
			\lesssim & \mathcal{T}^{\frac{2\sigma-1}{2}}E_0^{\frac12}(t)E_2^{\frac12}(t),
		\end{align*}
		where we have used the assumption $1-2\sigma >0$ to extract the small decay factor $\mathcal{T}^{\frac{2\sigma-1}{2}}$.
		
		Next, the lower-order terms arising from the Leibniz expansion are estimated via standard Sobolev inequalities:
		\begin{align*}
			- \sum_{l=1}^{s-1}&\binom{s-1}{l}\int_{0}^t\int_{\mathbb{T}^3} \overline{\nabla^{l}(1+a)\nabla^{s-1-l}\operatorname{div}\mathbf{u}}\cdot\nabla^{s-1}\overline{a} dxd\tau \\
			\lesssim & \int_{0}^t(\|\nabla a \|_{L^{\infty}}\| \operatorname{div}\mathbf{u} \|_{H^{s-2}}+\|a\|_{H^{s-1}}\|\operatorname{div}\mathbf{u} \|_{L^{\infty}})\| \overline{a}\|_{H^{s-1}}  d\tau\\
			\lesssim & \sup_{\tau\in[0,t]}\| a\|_{H^{s-1}}\int_{0}^t (\mathcal{T}+\tau)^{\frac{2-\sigma}{2}}\|\operatorname{div}\mathbf{u} \|_{H^{s-2}}(\mathcal{T}+\tau)^{\frac{-1-\sigma}{2}}\| a\|_{H^{s-1}} d\tau\\
			\lesssim & E_0^{\frac12}(t)E_1^{\frac12}(t)E_2^{\frac12}(t).
		\end{align*}
		Finally, the convective term is handled in a similar manner:
		\begin{align*}
			-\int_{0}^t\int_{\mathbb{T}^3}&\nabla^{s-1}(\overline{\mathbf{u}\cdot \nabla a})\cdot\nabla^{s-1}\overline{a} dxd\tau\\
			\lesssim & \int_{0}^t (\|\nabla \mathbf{u} \|_{H^{s-2}}\| \nabla a\|_{L^{\infty}}+\|\mathbf{u} \|_{L^{\infty}}\|  a \|_{H^{s}})\| a\|_{H^{s-1}} d\tau\\
			\lesssim  & \sup_{\tau\in[0,t]}\|a \|_{H^{s-1}}\int_{0}^t(\mathcal{T}+\tau)^{\frac{2-\sigma}{2}}\|\nabla \mathbf{u} \|_{H^{s-2}}(\mathcal{T}+\tau)^{\frac{-1-\sigma}{2}}\|a \|_{H^{s}}  d\tau\\
			\lesssim & E_0^{\frac12}(t)E_1^{\frac12}(t)E_2^{\frac12}(t).
		\end{align*}
		Integrating over time and combining the above estimates, it follows that
		\begin{align}\label{est ba hs-1}
			\frac12 \|\overline{a} \|^2_{\dot{H}^{s-1}}\lesssim&\| a_0 \|^2_{H^{s-1}}+\mathcal{T}^{\frac{2\sigma-1}{2}}E_0^{\frac12}(t)E_2^{\frac12}(t)+E_0^{\frac12}(t)E_1^{\frac12}(t)E_2^{\frac12}(t).
		\end{align}
		
		Averaging the equation for $\mathbf B$ in $\eqref{equ pcmhd}_3$ with respect to $x_3$, we obtain
		\begin{align}\label{equ bB}
			\partial_t\overline{\mathbf{B}} +\overline{\mathbf{u}\cdot \nabla \mathbf{B}}=-\mathbf{e}_3\overline{\operatorname{div} \mathbf{u}}+\overline{\mathbf{B}\cdot \nabla \mathbf{u}}-\overline{\mathbf{B}\operatorname{div}\mathbf{u}},
		\end{align}
		where we have used $\overline{\partial_{x_3}\mathbf u}=0$. Taking the $\dot H^{s-1}$ inner product of \eqref{equ bB} with $\overline{\mathbf B}$ yields
		\begin{align*}
			\frac{1}{2}\frac{d}{d\tau} \| \overline{\mathbf{B}} \|^2_{\dot{H}^{s-1}}=&- \int_{\mathbb{T}^3} \nabla^{s-1}(\overline{\operatorname{div} \mathbf{u}})\mathbf{e}_3\cdot\nabla^{s-1}\overline{\mathbf{B}} dx\\
			&+ \int_{\mathbb{T}^3} \nabla^{s-1}(-\overline{\mathbf{u}\cdot \nabla \mathbf{B}}+\overline{\mathbf{B}\cdot \nabla \mathbf{u}}-\overline{\mathbf{B}\operatorname{div}\mathbf{u}} )\cdot\nabla^{s-1}\overline{\mathbf{B}} dx.
		\end{align*}
		
		The linear term containing the uncancelled $\mathbf{e}_3$ coupling is handled by integrating by parts, which yields
		\begin{align*}
			- \int_{0}^t\int_{\mathbb{T}^3}& \nabla^{s-1}(\overline{\operatorname{div} \mathbf{u}})\mathbf{e}_3\cdot\nabla^{s-1}\overline{\mathbf{B}} dxd\tau\\ 
			= & \int_{0}^t\int_{\mathbb{T}^3} \nabla^{s-2}(\overline{\operatorname{div} \mathbf{u}})\mathbf{e}_3\cdot\nabla^{s }\overline{\mathbf{B}}dxd\tau\\
			\lesssim & \int_{0}^t  \| \operatorname{div}\mathbf{u}\|_{H^{s-2}}\|\mathbf{B} \|_{H^{s}} d\tau \\
			\lesssim &  \mathcal{T}^{\frac{2\sigma-1}{2}}\int_{0}^t(\mathcal{T}+\tau)^{\frac{2-\sigma}{2}}\| \operatorname{div}\mathbf{u}\|_{H^{s-2}}(\mathcal{T}+\tau)^{\frac{-1-\sigma}{2}}\|\mathbf{B} \|_{H^{s}} d\tau\\
			\lesssim & \mathcal{T}^{\frac{2\sigma-1}{2}}E_0^{\frac12}(t)E_2^{\frac12}(t).
		\end{align*}
		
		For the convective term, standard Sobolev embedding gives
		\begin{align*}
			- \int_{0}^t\int_{\mathbb{T}^3} &\nabla^{s-1} (\overline{\mathbf{u}\cdot \nabla \mathbf{B}}) \cdot\nabla^{s-1}\overline{\mathbf{B}} dxd\tau \\
			\lesssim& \int_{0}^t (\|\mathbf{u} \|_{H^{s-1}}\|\nabla \mathbf{B} \|_{L^{\infty}}+\|\mathbf{u} \|_{L^{\infty}}\|\nabla \mathbf{B} \|_{H^{s-1}}  )\|\mathbf{B} \|_{H^{s-1}} d\tau \\
			\lesssim & \sup_{\tau\in[0,t]}\|\mathbf{B} \|_{H^{s-1}}\int_{0}^t(\mathcal{T}+\tau)^{\frac{2-\sigma}{2}}\|\nabla \mathbf{u} \|_{H^{s-2}}(\mathcal{T}+\tau)^{\frac{-1-\sigma}{2}}\|\mathbf{B}\|_{H^{s}}  d\tau\\
			\lesssim & E_0^{\frac12}(t)E_1^{\frac12}(t)E_2^{\frac12}(t).
		\end{align*}
		
		Applying the Leibniz rule and integrating the highest-order velocity derivative by parts, this provides the bound:
		\begin{align*}
			\int_{0}^t \int_{\mathbb{T}^3}& \nabla^{s-1} (\overline{\mathbf{B}\cdot \nabla \mathbf{u}}) \cdot\nabla^{s-1}\overline{\mathbf{B}} dxd\tau \\
			=&  \sum_{l=1}^{s-1}\binom{s-1}{l}\int_{0}^t\int_{\mathbb{T}^3}   (\overline{\nabla^{l}\mathbf{B}\cdot \nabla\nabla^{s-1-l} \mathbf{u}}) \cdot\nabla^{s-1}\overline{\mathbf{B}} dxd\tau\\
			&- \int_{0}^t\int_{\mathbb{T}^3}     \overline{(\nabla^{s-1}\mathbf{u})\cdot   \mathbf{B}}\cdot\nabla\nabla^{s-1}\overline{\mathbf{B}} dxd\tau \\
			\lesssim& \int_{0}^t (\|\nabla \mathbf{B} \|_{L^{\infty}}\|\nabla\mathbf{u} \|_{H^{s-2}}+\|\mathbf{B} \|_{H^{s-1}}\| \nabla \mathbf{u} \|_{L^{\infty}}  )\|\mathbf{B} \|_{H^{s-1}} d\tau\\
			&+\int_{0}^t \|\nabla \mathbf{u} \|_{H^{s-2}}\| \mathbf{B} \|_{L^{\infty}} \|\mathbf{B} \|_{H^{s}} d\tau\\
			\lesssim & \sup_{\tau\in[0,t]}\|\mathbf{B} \|_{H^{s-1}}\int_{0}^t(\mathcal{T}+\tau)^{\frac{2-\sigma}{2}}\|\nabla \mathbf{u} \|_{H^{s-2}}(\mathcal{T}+\tau)^{\frac{-1-\sigma}{2}}\|\mathbf{B}\|_{H^{s}}  d\tau\\
			\lesssim & E_0^{\frac12}(t)E_1^{\frac12}(t)E_2^{\frac12}(t).
		\end{align*}
		
		Similarly, for the last term, we have
		\begin{align*}
			-  \int_{0}^t\int_{\mathbb{T}^3}& \nabla^{s-1} (\overline{\mathbf{B}  \operatorname{div} \mathbf{u}}) \cdot\nabla^{s-1}\overline{\mathbf{B} }dxd\tau \\
			=&-  \int_{0}^t\int_{\mathbb{T}^3} \nabla^{s-1} (\mathbf{B}  \operatorname{div} \mathbf{u}) \cdot\nabla^{s-1}\overline{\mathbf{B} }dxd\tau\\
			=& - \sum_{l=1}^{s-1}\binom{s-1}{l}\int_{0}^t\int_{\mathbb{T}^3}   (\nabla^{l}\mathbf{B}\operatorname{div}\nabla^{s-1-l} \mathbf{u}) \cdot\nabla^{s-1}\overline{\mathbf{B}} dxd\tau \\
			&+ \int_{0}^t\int_{\mathbb{T}^3}  \nabla^{s-1}   \mathbf{u}\cdot  \nabla(\mathbf{B}\cdot \nabla^{s-1}\overline{\mathbf{B}}) dxd\tau \\
			\lesssim & \int_{0}^t (\|\nabla \mathbf{B} \|_{L^{\infty}}\|\nabla\mathbf{u} \|_{H^{s-2}}+\| \mathbf{B} \|_{H^{s-1}}\| \operatorname{div} \mathbf{u} \|_{L^{\infty}}  )\|\mathbf{B} \|_{H^{s-1}} d\tau\\
			&+\int_{0}^t \|\nabla \mathbf{u} \|_{H^{s-2}}\| \mathbf{B} \|_{H^{3}} \|\mathbf{B} \|_{H^{s}} d\tau\\
			\lesssim &  \sup_{\tau\in[0,t]}\|\mathbf{B} \|_{H^{s-1}}\int_{0}^t(\mathcal{T}+\tau)^{\frac{2-\sigma}{2}}\|\nabla \mathbf{u} \|_{H^{s-2}}(\mathcal{T}+\tau)^{\frac{-1-\sigma}{2}}\|\mathbf{B}\|_{H^{s}}  d\tau\\
			\lesssim & E_0^{\frac12}(t)E_1^{\frac12}(t)E_2^{\frac12}(t).
		\end{align*}
		
		Integrating over time and summing the above contributions, we arrive at the desired bound for the averaged magnetic field
		\begin{align}\label{est b hs-1}
			\frac12 \|\overline{\mathbf{B}} \|^2_{\dot{H}^{s-1}}\lesssim \| \mathbf{B}_0 \|^2_{H^{s-1}}+\mathcal{T}^{\frac{2\sigma-1}{2}}E_0^{\frac12}(t)E_2^{\frac12}(t)+E_0^{\frac12}(t)E_1^{\frac12}(t)E_2^{\frac12}(t).
		\end{align}
		
		Recall  the equation for $\overline{\mathbf{u}}$:
		\begin{align}\label{equ bu}
			\partial_t \overline{\mathbf{u}}  -
			\mu\Delta  \overline{\mathbf{u}}-(\lambda+\mu)\nabla (\operatorname{div}\overline{\mathbf{u}})+  \nabla\overline{ \mathcal{D}}=\overline{F_{\overline{\mathbf{u}}}},
		\end{align} 
		where the nonlinear term is given by
		\begin{align*}
			F_{\overline{\mathbf{u}}}=&-\mathbf{u}\cdot \nabla \mathbf{u}+\mathbf{B}\cdot \nabla \mathbf{B} -\mu I(a){\Delta \mathbf{u}}-(\lambda +\mu)I(a){\nabla \operatorname{div}\mathbf{u}}\\
			&-I(a)(\partial_{x_3} \mathbf{B}+\mathbf{B}\cdot \nabla \mathbf{B}-\nabla\mathcal{D}),
		\end{align*}
		with $I(a)=\frac{a}{1+a}$. Recall the new quantity 	$\mathcal{D}= P(1+a)+B_3+\frac12|\mathbf{B}|^2$. By utilizing the evolution equations for $a$ and $\mathbf{B}$, we derive the governing equation for $\mathcal{D}$ as follows
		\begin{align*}
			\partial_t\mathcal{D}=&P^{\prime}(1+a)\partial_ta+\partial_tB_3+\mathbf{B}\cdot \partial_t\mathbf{B}\\
			=&P^{\prime}(1+a)(-\operatorname{div}\mathbf{u}-\mathbf{u}\cdot \nabla a-a\operatorname{div}\mathbf{u})+\partial_{x_3}u_3-\operatorname{div}\mathbf{u}-\mathbf{u}\cdot\nabla B_3+\mathbf{B}\cdot \nabla u_3-B_3\operatorname{div}\mathbf{u}\\
			&+\mathbf{B}\cdot (\partial_{x_3}\mathbf{u}-\mathbf{e}_3\operatorname{div}\mathbf{u}-\mathbf{u}\cdot\nabla \mathbf{B}+\mathbf{B}\cdot \nabla \mathbf{u}-\mathbf{B}\operatorname{div}\mathbf{u} )\\
			=& \partial_{x_3}u_3-2\operatorname{div}\mathbf{u}+F_{\mathcal{D}},
		\end{align*}
		where
		\begin{align*}
			F_{\mathcal{D}}=& (1-P^{\prime}(1+a))\operatorname{div}\mathbf{u}-P^{\prime}(1+a)(\mathbf{u}\cdot\nabla a+a\operatorname{div}\mathbf{u})-\mathbf{u}\cdot\nabla B_3+\mathbf{B}\cdot \nabla u_3-2B_3\operatorname{div}\mathbf{u}\\
			&+\mathbf{B}\cdot\partial_{x_3}\mathbf{u}-\frac12\mathbf{u}\cdot\nabla |\mathbf{B}|^2+\mathbf{B}\cdot (\mathbf{B}\cdot \nabla \mathbf{u})-|\mathbf{B}|^2\operatorname{div}\mathbf{u}.
		\end{align*}
		
		Averaging over $x_3$ and invoking $\overline{\partial_{x_3}u_3}=0$, we have
		\begin{align}\label{equ bD}
			\partial_t\overline{\mathcal{D}}+2\overline{\operatorname{div}\mathbf{u}}=\overline{F_{\mathcal{D}}}.
		\end{align}
		
		To handle the uncancelled linear terms $P(1+a)+B_3+\frac12|\mathbf{B}|^2$ in the equation for $\overline{\mathbf{u}}$,   we derive  the estimate for $\|\overline{\mathbf{u}} \|^2_{\dot{H}^{s-1}}$ and $\frac12\|\overline{\mathcal{D}} \|^2_{\dot{H}^{s-1}}$ as follows
		\begin{align}\label{est dtbubD hs-1}
			\frac12\frac{d}{d\tau}& \Big((\mathcal{T}+\tau)^{1-\sigma}(\|\overline{\mathbf{u}} \|^2_{\dot{H}^{s-1}} +\frac{1}{2}\|\overline{\mathcal{D}} \|^2_{\dot{H}^{s-1}})\Big) -\frac{1-\sigma}{2}(\mathcal{T}+\tau)^{-\sigma}(\|\overline{\mathbf{u}} \|^2_{\dot{H}^{s-1}}+\frac12\|\overline{\mathcal{D}} \|^2_{\dot{H}^{s-1}})\\
			&+
			\mu(\mathcal{T}+\tau)^{1-\sigma}\|\nabla \overline{\mathbf{u}} \|^2_{\dot{H}^{s-1}}+
			(\lambda+\mu)(\mathcal{T}+\tau)^{1-\sigma}\| \operatorname{div}  \overline{\mathbf{u}} \|^2_{\dot{H}^{s-1}} \nonumber
			\\
			=&\int_{\mathbb{T}^3}(\mathcal{T}+\tau)^{1-\sigma}(\nabla^{s-1}\overline{F_{\overline{\mathbf{u}}}}\cdot\nabla^{s-1}\overline{\mathbf{u}}+\frac12 \nabla^{s-1}\overline{F_{\mathcal{D}}}\nabla^{s-1}\overline{\mathcal{D}} )dx,\nonumber
		\end{align}
		where we have used the fact that
		\begin{align*}
			\int_{\mathbb{T}^3}(\mathcal{T}+\tau)^{1-\sigma}\big(\nabla^{s-1}\nabla\overline{\mathcal{D}}\cdot\nabla^{s-1}\overline{\mathbf{u}}+ \nabla^{s-1}\overline{\operatorname{div}\mathbf{u}}\nabla^{s-1}\overline{\mathcal{D}}\big) dx=0.
		\end{align*}
		
		To control the negative term arising from the time derivative on the left-hand side, Lemma \ref{lem poin} implies 
		\begin{align*}
			&-\frac{1-\sigma}{2}(\mathcal{T}+\tau)^{-\sigma}(\|\overline{\mathbf{u}} \|^2_{\dot{H}^{s-1}}+\frac12\|\overline{\mathcal{D}} \|^2_{\dot{H}^{s-1}})\\
			&\leq C \mathcal{T}^{-1}(\mathcal{T}+\tau)^{1-\sigma}\|\nabla\overline{\mathbf{u}} \|^2_{\dot{H}^{s-1}}+\mathcal{T}^{-1}(\mathcal{T}+\tau)^{1-\sigma}\|\overline{\mathcal{D}} \|^2_{\dot{H}^{s-1}}\\
			&\leq \frac{\mu}{4}(\mathcal{T}+\tau)^{1-\sigma}\|\nabla\overline{\mathbf{u}} \|^2_{\dot{H}^{s-1}}+\mathcal{T}^{-1}(\mathcal{T}+\tau)^{1-\sigma}\|\overline{\mathcal{D}} \|^2_{\dot{H}^{s-1}},
		\end{align*}
		provided that the time shift parameter is chosen sufficiently large,  $\mathcal{T}\geq \frac{4C}{\mu}$.
		
		For the nonlinear terms, applying the Cauchy-Schwarz inequality and integrating by parts yields
		\begin{align*}
			\int_{\mathbb{T}^3}&(\mathcal{T}+\tau)^{1-\sigma}(\nabla^{s-1}\overline{F_{\overline{\mathbf{u}}}}\cdot\nabla^{s-1}\overline{\mathbf{u}}+\frac12 \nabla^{s-1}\overline{F_{\mathcal{D}}}\nabla^{s-1}\overline{\mathcal{D}}) dx\\
			=&-\int_{\mathbb{T}^3}(\mathcal{T}+\tau)^{1-\sigma}(\nabla^{s-2}\overline{F_{\overline{\mathbf{u}}}}\cdot\nabla^{s}\overline{\mathbf{u}}-\frac12 \nabla^{s-1}\overline{F_{\mathcal{D}}}\nabla^{s-1}\overline{\mathcal{D}})dx\\
			\leq & (\mathcal{T}+\tau)^{1-\sigma}(\|\overline{F_{\overline{\mathbf{u}}}} \|_{\dot{H}^{s-2}}\|\nabla \overline{\mathbf{u}} \|_{\dot{H}^{s-1}}+\|\overline{F_{\mathcal{D}}} \|_{\dot{H}^{s-1}}\| \overline{\mathcal{D}} \|_{\dot{H}^{s-1}}).
		\end{align*}
		
		To bound the nonlinear term $F_{\overline{\mathbf{u}}}$, we decompose it into three parts:
		\begin{align*}
			F_{\overline{\mathbf{u}}}= F_{\overline{\mathbf{u}}}^{(1)}+F_{\overline{\mathbf{u}}}^{(2)}+F_{\overline{\mathbf{u}}}^{(3)},
		\end{align*}
		where
		\begin{align*}
			F_{\overline{\mathbf{u}}}^{(1)}=& -{\mathbf{u}\cdot \nabla \mathbf{u}}+{\mathbf{B}\cdot \nabla \mathbf{B}}-\mu I(a){\Delta \mathbf{u}}-(\lambda +\mu)I(a){\nabla \operatorname{div}\mathbf{u}},\\
			F_{\overline{\mathbf{u}}}^{(2)}=& -I(a)(\partial_{x_3} \mathbf{B}+\mathbf{B}\cdot \nabla \mathbf{B}),\\
			F_{\overline{\mathbf{u}}}^{(3)}=& I(a)\nabla\mathcal{D}.
		\end{align*}
		\begin{rem}
			We emphasize that due to the absence of magnetic dissipation, controlling the magnetic terms relies critically on symmetry assumptions and the damped wave structure of $\partial_{x_3}\mathbf{B}$. The corresponding estimates will be established as part of the bounds for $N_i(t)$.
		\end{rem}
		
		Recalling that $\mathbf{B}_h$ is an odd function with respect to $x_3$, it follows that
		\begin{align*}
			\overline{\mathbf{B}\cdot \nabla \mathbf{B}}=\overline{\mathbf{B}_h\cdot \nabla_h \mathbf{B}}+\overline{B_3\partial_{x_3} \mathbf{B}}
			=\overline{\mathbf{B}_h\cdot \widetilde{\nabla_h \mathbf{B}}}+\overline{\widetilde{B_3}\partial_{x_3} \mathbf{B}}.
		\end{align*}
		Invoking the above decomposition, we can estimate
		\begin{align*}
			\|\overline{F_{\overline{\mathbf{u}}}^{(1)}} \|_{\dot{H}^{s-2}}= &\|-\overline{\mathbf{u}\cdot \nabla \mathbf{u}}+\overline{\mathbf{B}_h\cdot \widetilde{\nabla_h \mathbf{B}}}+\overline{\widetilde{B_3}\partial_{x_3} \mathbf{B}}-\mu\overline{( I(a))\Delta \mathbf{u}}-(\lambda +\mu)\overline{(I(a))\nabla \operatorname{div}\mathbf{u}} \|_{\dot{H}^{s-2}}\\
			\lesssim & \|\mathbf{u} \|^2_{H^{s-1}}+\|\widetilde{\mathbf{B}_h} \|_{H^{s-2}}\|\widetilde{\mathbf{B}} \|_{H^{s-1}}+\| \widetilde{B_3}\|_{H^{s-2}}\| \partial_{x_3}\mathbf{B}\|_{H^{s-2}}+\| a\|_{H^{s-1}}\|\nabla\mathbf{u} \|_{H^{s-1}} \\
			\lesssim & \|(a,\mathbf{u},\widetilde{\mathbf{B}}) \|_{H^{s-1}}(\|\nabla\mathbf{u} \|_{H^{s-1}}+\|\partial_{x_3}\widetilde{\mathbf{B}} \|_{H^{s-2}}).
		\end{align*}
		
		By a similar way, the second part $F_{\overline{\mathbf{u}}}^{(2)}$ is bounded by
		\begin{align*}
			\|\overline{F_{\overline{\mathbf{u}}}^{(2)}} \|_{\dot{H}^{s-2}}=&\|\overline{\widetilde{I(a)}\partial_{x_3} \mathbf{B}}+\overline{ I(a)\mathbf{B}\cdot \nabla \mathbf{B}} \|_{\dot{H}^{s-2}}\\
			\lesssim & \|\widetilde{a} \|_{H^{s-2}}\|\partial_{x_3}\mathbf{B} \|_{H^{s-2}}+\|a \|_{H^{s-2}}(\|\widetilde{\mathbf{B}_h} \|_{H^{s-2}}\|\widetilde{\mathbf{B}} \|_{H^{s-1}}+\| \widetilde{B_3}\|_{H^{s-2}}\| \partial_{x_3}\mathbf{B}\|_{H^{s-2}})\\
			\lesssim & \| a\|_{H^{s-2}}\|\partial_{x_3}\widetilde{\mathbf{B}} \|_{H^{s-2}}(1+\| \widetilde{\mathbf{B}} \|_{H^{s-1}} ).
		\end{align*}
		Turning to $F_{\overline{\mathbf{u}}}^{(3)}$, we split the product into average and oscillatory parts based on the definition of $\mathcal{D}$:
		\begin{align*}
			\overline{I(a)\nabla\mathcal{D}}=&\overline{I(a)}\nabla\overline{\mathcal{D}}+\overline{\widetilde{I(a)}\nabla\widetilde{\mathcal{D}}}\\
			=& \overline{I(a)}\nabla\overline{\mathcal{D}}+\overline{\widetilde{I(a)}\nabla(\widetilde{P(1+a)}+\widetilde{B_3}+\frac{1}{2}\widetilde{|\mathbf{B}|^2})}.
		\end{align*}
		By the above equality and Proposition \ref{lem bftf},
		\begin{align*}
			\|\overline{F_{\overline{\mathbf{u}}}^{(3)}}\|_{\dot{H}^{s-2}}=& \|\overline{I(a)}\nabla\overline{\mathcal{D}}+\overline{\widetilde{I(a)}\nabla(\widetilde{P(1+a)}+\widetilde{B_3}+\frac{1}{2}\widetilde{|\mathbf{B}|^2})} \|_{\dot{H}^{s-2}}\\
			\lesssim & \|a \|_{H^{s-1}}\|\overline{\mathcal{D}} \|_{\dot{H}^{s-1}}+\|\widetilde{a} \|_{H^{s-2}}(\|\nabla\widetilde{P(1+a)}  \|_{H^{s-2}}+\|\nabla\widetilde{B_3} \|_{H^{s-2}}+\|\nabla\widetilde{|\mathbf{B}|^2} \|_{H^{s-2}} )\\
			\lesssim &\|a \|_{H^{s-1}}\|\overline{\mathcal{D}} \|_{\dot{H}^{s-1}}+(\| (a,\mathbf{B})\|_{H^{s-1}}+\| (a,\mathbf{B})\|^2_{H^{s-1}})\|\widetilde{a} \|_{H^{s-2}}.
		\end{align*}
		
		Combining the above estimates, we conclude that
		\begin{align}\label{est Fbu hs-2}
			\| \overline{F_{\overline{\mathbf{u}}}}\|_{\dot{H}^{s-2}}\lesssim&(1+\| (a,\mathbf{B} ) \|_{H^{s-1}})\|(a,\mathbf{u},  \mathbf{B} ) \|_{H^{s-1}}(\|\overline{\mathcal{D}} \|_{\dot{H}^{s-1}}+\|\nabla\mathbf{u} \|_{H^{s-1}}+\|(\widetilde{a},\nabla\mathbf{u},\partial_{x_3}\widetilde{\mathbf{B}}) \|_{H^{s-2}}).
		\end{align}
		
		Integrating over time, we arrive at
		\begin{align*}
			\int_{0}^t&(\mathcal{T}+\tau)^{1-\sigma}\|\overline{F_{\overline{\mathbf{u}}}} \|_{\dot{H}^{s-2}}\|\nabla \overline{\mathbf{u}} \|_{\dot{H}^{s-1}}d\tau\\
			\lesssim & \sup_{\tau\in[0,t]}(1+\| (a,\mathbf{B} ) \|_{H^{s-1}})\|(a,\mathbf{u},  \mathbf{B} ) \|_{H^{s-1}}\\
			&\times\int_{0}^t(\mathcal{T}+\tau)^{1-\sigma}(\|\overline{\mathcal{D}} \|_{\dot{H}^{s-1}}+\|\nabla\mathbf{u} \|_{H^{s-1}}+\|(\widetilde{a},\partial_{x_3}\widetilde{\mathbf{B}}) \|_{H^{s-2}})\|\nabla \overline{\mathbf{u}} \|_{\dot{H}^{s-1}}d\tau\\
			\lesssim & E_1 (t)(E_1^{\frac12}(t)+N_1^{\frac12}(t)).
		\end{align*}
		
		Analogously, we decompose the nonlinear term  $F_{\mathcal{D}}$ into three parts:
		\begin{align*}
			F_{\mathcal{D}} =F_{\mathcal{D}}^{(1)}+F_{\mathcal{D}}^{(2)}+F_{\mathcal{D}}^{(3)},
		\end{align*}
		where 
		\begin{align*}
			F_{\mathcal{D}}^{(1)}=& (1-P^{\prime}(1+a))\operatorname{div}\mathbf{u}-P^{\prime}(1+a)(\mathbf{u}\cdot\nabla a+a\operatorname{div}\mathbf{u}),\\
			F_{\mathcal{D}}^{(2)}=& -\mathbf{u}\cdot\nabla  B_3+\mathbf{B}\cdot \nabla u_3
			-2B_3\operatorname{div}\mathbf{u}+\mathbf{B}\cdot\partial_{x_3}\mathbf{u},\\
			F_{\mathcal{D}}^{(3)}=& -\frac12\mathbf{u}\cdot\nabla |\mathbf{B}|^2+\mathbf{B}\cdot (\mathbf{B}\cdot \nabla \mathbf{u})-|\mathbf{B}|^2\operatorname{div}\mathbf{u}.
		\end{align*}
		
		By Proposition \ref{lem bftf} and the property of the pressure function $P$, the first part is bounded by
		\begin{align*}
			\|\overline{F_{\mathcal{D}}^{(1)}} \|_{\dot{H}^{s-1}}=&\| \overline{(1-P^{\prime}(1+a))\operatorname{div}\mathbf{u}}+\overline{P^{\prime}(1+a)(\mathbf{u}\cdot\nabla a+a\operatorname{div}\mathbf{u})} \|_{\dot{H}^{s-1}} \\
			\lesssim & \|a \|_{H^{s-1}}\|\operatorname{div}\mathbf{u} \|_{H^{s-1}}+(1+\|a \|_{H^{s-1}})(\|a \|_{H^{s-1}}\|\nabla \mathbf{u} \|_{H^{s-1}}+\| a\|_{H^{s}}\| \mathbf{u}\|_{H^{2}} ) \\
			\lesssim   & (1+\|a \|_{H^{s-1}})(\|a \|_{H^{s-1}}\|\nabla \mathbf{u} \|_{H^{s-1}}+\| a\|_{H^{s}}\|\nabla \mathbf{u}\|_{H^{s-2}} ) .
		\end{align*}
		
		Turning to the second component, which consists of bilinear interactions between the velocity and magnetic fields, an application of standard product estimates yields
		\begin{align*}
			\|\overline{F_{\mathcal{D}}^{(2)}} \|_{\dot{H}^{s-1}}=&\| -\overline{\mathbf{u}\cdot\nabla B_3}+\overline{\mathbf{B}\cdot \nabla u_3}
			-2\overline{B_3\operatorname{div}\mathbf{u}}+\overline{\mathbf{B}\cdot\partial_{x_3}\mathbf{u}}\|_{\dot{H}^{s-1}}\\
			\lesssim & \| \mathbf{u}\|_{H^{s-1}}\|\nabla B_3 \|_{H^{2}}+\| \mathbf{u}\|_{H^{2}}\|\nabla B_3 \|_{H^{s-1}}+\| \mathbf{B}  \|_{H^{2}}\|\nabla \mathbf{u}\|_{H^{s-1}}+\| \mathbf{B}  \|_{H^{s-1}}\|\nabla \mathbf{u}\|_{H^{2}}\\
			&+\| \mathbf{B}  \|_{H^{s-1}}\|\nabla \mathbf{u}\|_{H^{2}}+\| \mathbf{B}  \|_{H^{2}}\|\nabla \mathbf{u}\|_{H^{s-1}} \\
			\lesssim &  \| \mathbf{B} \|_{H^{s-1}}\|\nabla \mathbf{u}\|_{H^{s-1}}+\|\mathbf{B}\|_{H^{s}}\| \mathbf{u}\|_{H^{s-2}} .
		\end{align*}
		
		Similarly, the third term can be bounded by
		\begin{align*}
			\|\overline{F_{\mathcal{D}}^{(3)}} \|_{\dot{H}^{s-1}}=&\|-\frac12\overline{\mathbf{u}\cdot\nabla |\mathbf{B}|^2}+\overline{\mathbf{B}\cdot (\mathbf{B}\cdot \nabla \mathbf{u})}-\overline{|\mathbf{B}|^2\operatorname{div}\mathbf{u}} \|_{\dot{H}^{s-1}}  \\
			\lesssim &   \| \mathbf{u}\|_{H^{s-1}}\|\mathbf{B} \|^2_{H^{3}}+\| \mathbf{u}\|_{H^{2}}\| \mathbf{B}\|_{H^{3}}\| \mathbf{B} \|_{H^{s}}+\| \mathbf{B}\|_{H^{s-1}}\| \mathbf{B}\|_{H^{2}}\| \nabla\mathbf{u} \|_{H^{2}}\\
			& +\| \mathbf{B}\|_{H^{2}}(\| \mathbf{B}\|_{H^{2}}\| \nabla\mathbf{u} \|_{H^{s-1}}+\| \mathbf{B}\|_{H^{s-1}}\| \nabla\mathbf{u} \|_{H^{2}})\\
			\lesssim & \|\mathbf{B} \|^2_{H^{s-1}}\|\nabla \mathbf{u} \|_{H^{s-1}}+\| \mathbf{B}\|_{H^{3}}\| \mathbf{B}\|_{H^{s}}\|\nabla \mathbf{u} \|_{H^{s-2}}.
		\end{align*}
		
		Combining the estimates for  $\overline{F_{\mathcal{D}}^{(1)}}$, $\overline{F_{\mathcal{D}}^{(2)}}$ and $\overline{F_{\mathcal{D}}^{(3)}}$ yield 
		\begin{align}\label{est FbD hs-1}
			\|\overline{F_{\mathcal{D}} } \|_{\dot{H}^{s-1}}\lesssim (1+\|(a,\mathbf{B}) \|_{H^{s-1}})(\|(a,\mathbf{B}) \|_{H^{s-1}}\|\nabla \mathbf{u} \|_{H^{s-1}}+\| (a,\mathbf{B})\|_{H^{s}}\|\nabla \mathbf{u}\|_{H^{s-2}} ) .
		\end{align}
		
		Multiplying by the corresponding time weight and integrating over time, we deduce
		\begin{align*}
			\int_{0}^t&(\mathcal{T}+\tau)^{1-\sigma} \|\overline{F_{\mathcal{D}}} \|_{\dot{H}^{s-1}}\| \overline{\mathcal{D}} \|_{\dot{H}^{s-1}} d\tau\\
			\lesssim& \sup_{\tau\in[0,t]}(1+\|(a,\mathbf{B}) \|_{H^{s-1}})\|(a,\mathbf{B}) \|_{H^{s-1}}\int_{0}^t(\mathcal{T}+\tau)^{1-\sigma} \|\nabla \mathbf{u} \|_{H^{s-1}}  \| \overline{\mathcal{D}} \|_{\dot{H}^{s-1}} d\tau\\
			&+  \sup_{\tau\in[0,t]}(\mathcal{T}+\tau)^{ -\frac{\sigma}{2}} (1+\|(a,\mathbf{B}) \|_{H^{s-1}})\| (a,\mathbf{B})\|_{H^{s}}\\
			&\quad\times\int_{0}^t  (\mathcal{T}+\tau)^{\frac{2-\sigma}{2}} \|\nabla \mathbf{u}\|_{H^{s-2}}    (\mathcal{T}+\tau)^{\frac{1-\sigma}{2}} \| \overline{\mathcal{D}} \|_{\dot{H}^{s-1}} d\tau\\
			\lesssim & E_1^{\frac32}(t)+E_0^{\frac12}(t)E_1^{\frac12}(t)E_2^{\frac12}(t).
		\end{align*}
		
		Inserting this estimate, along with the bound for $\overline{F_{\overline{\mathbf{u}}}}$, into the energy identity \eqref{est dtbubD hs-1} and integrating over the time interval $[0,t]$, we conclude that
		\begin{align}\label{est buD hs-1}
			(\mathcal{T}+t)^{1-\sigma}&(\|\overline{\mathbf{u}} \|^2_{\dot{H}^{s-1}} + \|\overline{\mathcal{D}} \|^2_{\dot{H}^{s-1}})+\mu\int_{0}^t(\mathcal{T}+\tau)^{1-\sigma}\|\nabla \overline{\mathbf{u}} \|^2_{\dot{H}^{s-1}}d\tau \\
			\lesssim& \mathcal{T}^{1-\sigma}\|(a_0,\mathbf{u}_0,\mathbf{B}_0) \|_{H^{s-1}}^2+ \mathcal{T}^{-1}\int_{0}^t(\mathcal{T}+\tau)^{1-\sigma}\|\overline{\mathcal{D}} \|^2_{\dot{H}^{s-1}}d\tau\nonumber
			\\
			&+E_1(t)(E_1^{\frac12}(t)+N_1^{\frac12}(t))+E_0^{\frac12}(t)E_1^{\frac12}(t)E_2^{\frac12}(t).\nonumber
		\end{align}
		
		To extract the hidden damping effect of $\overline{\mathcal{D}}$, we consider the cross term
		\begin{align}\label{est duD hs-2}
			\frac{d}{d\tau}&\Big[(\mathcal{T}+\tau)^{1-\sigma}\int_{\mathbb{T}^3}\nabla^{s-2}\overline{\mathbf{u}}\cdot \nabla^{s-2}\nabla\overline{\mathcal{D}}  dx\Big]-(1-\sigma)(\mathcal{T}+\tau)^{-\sigma}\int_{\mathbb{T}^3}\nabla^{s-2}\overline{\mathbf{u}}\cdot \nabla^{s-2}\nabla\overline{\mathcal{D}}  dx\\
			=&(\mathcal{T}+\tau)^{1-\sigma}\int_{\mathbb{T}^3}\nabla^{s-2}\partial_{\tau}\overline{\mathbf{u}}\cdot \nabla^{s-2}\nabla\overline{\mathcal{D}}  dx+(\mathcal{T}+\tau)^{1-\sigma}\int_{\mathbb{T}^3}\nabla^{s-2}\overline{\mathbf{u}}\cdot \nabla^{s-2}\nabla\partial_{\tau}\overline{\mathcal{D}}  dx.\nonumber
		\end{align}
		
		Substituting the equations of \eqref{equ bu} and \eqref{equ bD} into the right-hand side of \eqref{est duD hs-2} and applying standard Sobolev estimates, we deduce that
		\begin{align*}
			(\mathcal{T}+\tau)^{1-\sigma}&\int_{\mathbb{T}^3}\nabla^{s-2}\partial_{\tau}\overline{\mathbf{u}}\cdot \nabla^{s-2}\nabla\overline{\mathcal{D}}+\nabla^{s-2}\overline{\mathbf{u}}\cdot \nabla^{s-2}\nabla\partial_{\tau}\overline{\mathcal{D}} dx+(\mathcal{T}+\tau)^{1-\sigma}\|\overline{\mathcal{D}} \|^2_{\dot{H}^{s-1}}\\
			=&(\mathcal{T}+\tau)^{1-\sigma}\int_{\mathbb{T}^3}\nabla^{s-2} (\mu\Delta  \overline{\mathbf{u}}+(\lambda+\mu)\nabla (\operatorname{div} \overline{\mathbf{u}})+\overline{F_{\overline{\mathbf{u}}}} )\cdot \nabla^{s-2}\nabla\overline{\mathcal{D}}dx\\
			&+(\mathcal{T}+\tau)^{1-\sigma}\int_{\mathbb{T}^3}\nabla^{s-2}\overline{\mathbf{u}}\cdot \nabla^{s-2}\nabla (-2\overline{\operatorname{div}\mathbf{u}}+\overline{F_{\mathcal{D}}} ) dx\\
			\leq &\frac{1}{4}(\mathcal{T}+\tau)^{1-\sigma}\|\overline{\mathcal{D}} \|_{\dot{H}^{s-1}}^2+ C(\mathcal{T}+\tau)^{1-\sigma}\|\nabla \mathbf{u} \|_{\dot{H}^{s-1}}^2\\
			&+C(\mathcal{T}+\tau)^{1-\sigma}\|\overline{F_{\overline{\mathbf{u}}}}\|_{\dot{H}^{s-2}}^2+C(\mathcal{T}+\tau)^{1-\sigma}\|\overline{F_{\mathcal{D}}} \|_{\dot{H}^{s-2}}^2.
		\end{align*}
		
		Plugging the above estimate into \eqref{est duD hs-2} and invoking the nonlinear estimates \eqref{est Fbu hs-2} and \eqref{est FbD hs-1}, it follows that
		\begin{align*}
			\frac{d}{d\tau}&\Big[(\mathcal{T}+\tau)^{1-\sigma}\int_{\mathbb{T}^3}\nabla^{s-2}\overline{\mathbf{u}}\cdot \nabla^{s-2}\nabla\overline{\mathcal{D}}  dx\Big]+\frac12(\mathcal{T}+\tau)^{1-\sigma} \|\overline{\mathcal{D}} \|^2_{\dot{H}^{s-1}}\leq C(\mathcal{T}+\tau)^{1-\sigma}\|\nabla \mathbf{u} \|_{\dot{H}^{s-1}}^2\\
			&+C(\mathcal{T}+\tau)^{1-\sigma}(1+\| (a,\mathbf{B} ) \|_{H^{s-1}})^2\|(a,\mathbf{u},  \mathbf{B} ) \|^2_{H^{s-1}}(\|\overline{\mathcal{D}} \|_{\dot{H}^{s-1}}+\|\nabla\mathbf{u} \|_{H^{s-1}}+\|(\widetilde{a},\partial_{x_3}\widetilde{\mathbf{B}}) \|_{H^{s-2}})^2\\
			&+C(\mathcal{T}+\tau)^{1-\sigma} (1+\|(a,\mathbf{B})  \|_{H^{s-2}})^2(\|(a,\mathbf{B}) \|_{H^{s-2}}\|\nabla \mathbf{u} \|_{H^{s-2}}+\| (a,\mathbf{B})\|_{H^{s-1}}\|\nabla \mathbf{u}\|_{H^{s-3}})^2.
		\end{align*}
		
		Integrating over the time interval $[0,t]$, we obtain
		\begin{align}\label{est uD hs-2}
			(\mathcal{T}+t)^{1-\sigma}&\int_{\mathbb{T}^3}\nabla^{s-2}\overline{\mathbf{u}}\cdot \nabla^{s-2}\nabla\overline{\mathcal{D}}  dx+ \int_{0}^t(\mathcal{T}+\tau)^{1-\sigma} \|\overline{\mathcal{D}} \|^2_{\dot{H}^{s-1}}d\tau\\
			\leq& \mathcal{T}^{1-\sigma}\|(a_0,\mathbf{u}_0,\mathbf{B}_0) \|_{H^{s-1}}^2+ C\int_{0}^t(\mathcal{T}+\tau)^{1-\sigma}\|\nabla \mathbf{u} \|_{\dot{H}^{s-1}}^2d\tau\nonumber\\
			&+E_1(t)(E_1^{\frac12}(t)+N_1^{\frac12}(t))+E_0^{\frac12}(t)E_1^{\frac12}(t)E_2^{\frac12}(t).\nonumber
		\end{align}
		
		For the first term on the left-hand side, applying the Cauchy-Schwarz and Young inequalities together with the Poincar\'e inequality \eqref{Poi1'} of Lemma \ref{lem poin} yields
		\begin{align*}
			\left|(\mathcal{T}+t)^{1-\sigma}\int_{\mathbb{T}^3}\nabla^{s-2}\overline{\mathbf{u}}\cdot \nabla^{s-2}\nabla\overline{\mathcal{D}}  dx\right|\leq (\mathcal{T}+t)^{1-\sigma}\|\overline{\mathbf{u}} \|^2_{\dot{H}^{s-1}}+\frac12(\mathcal{T}+t)^{1-\sigma}\|\overline{\mathcal{D}} \|_{\dot{H}^{s-1}}^2.
		\end{align*}
		
		Multiplying   \eqref{est uD hs-2}  by a suitably  small constant  $\gamma=\min\{\mu/(4C),1/2\}$ and adding the result to \eqref{est buD hs-1}, we have
		\begin{align}\label{est buD hs-1F}
			(\mathcal{T}+t)^{1-\sigma}&(\|\overline{\mathbf{u}} \|^2_{\dot{H}^{s-1}} + \|\overline{\mathcal{D}} \|^2_{\dot{H}^{s-1}})+\int_{0}^t(\mathcal{T}+\tau)^{1-\sigma}\|\nabla \overline{\mathbf{u}} \|^2_{\dot{H}^{s-1}}+\gamma\| \overline{\mathcal{D}} \|^2_{\dot{H}^{s-1}} d\tau \\
			\lesssim& \mathcal{T}^{1-\sigma}\|(a_0,\mathbf{u}_0,\mathbf{B}_0) \|_{H^{s-1}}^2+E_1(t)(E_1^{\frac12}(t)+N_1^{\frac12}(t))+E_0^{\frac12}(t)E_1^{\frac12}(t)E_2^{\frac12}(t).\nonumber
		\end{align}
		Here, the time shift parameter $\mathcal{T}$ is chosen sufficiently large such that $\mathcal{T}^{-1} \leq \frac{\gamma}{8C}$.
		
		From \eqref{est ba hs-1}, \eqref{est b hs-1}, \eqref{est buD hs-1F} and the definition of $\overline{E_1}(t)$, we get
		\begin{align*}
			\overline{E_1}(t)\lesssim   &\mathcal{T}^{1-\sigma}\|(a_0,\mathbf{u}_0,\mathbf{B}_0) \|_{H^{s-1}}^2+\mathcal{T}^{\frac{2\sigma-1}{2}}E_0^{\frac12}(t)E_2^{\frac12}(t)+(E_0(t)+E_1(t)+E_2(t)+N_1(t))^{\frac32}.
		\end{align*}
	\end{proof}
	\subsection{Estimate of $\widetilde{E_1}(t)$}\label{Sec est tE1}
	This part addresses the oscillatory components $(\widetilde a,\widetilde{\mathbf{u}},\widetilde{\mathbf{B}})$. Here the Poincar\'e inequality in the $x_3$-direction provides effective magnetic damping. However, the decay available for $\widetilde{\mathbf{B}}$ is one derivative weaker than the top nonlinear coupling. To circumvent this derivative loss, we introduce the  weights $P'(1+\overline a)/(1+\overline a)^2$ and $1/(1+\overline a)$ into the density and magnetic energies, respectively. As anticipated in the introduction, this choice of weights guarantees the cancellation of the highest-order magnetic tension and magnetic pressure terms against those in the velocity equation.
	
	\begin{lem}\label{est tE1}
		Under the assumptions of Theorem \ref{thm1} and assumption \eqref{ass}, there holds
		\begin{align*}
			\widetilde{E}_1(t)\lesssim \mathcal{T}^{1-\sigma}\|(a_0,\mathbf{u}_0,\mathbf{B}_0) \|_{H^{s-1}}^2+ E_0^{\frac12}(t)N_1^{\frac12}(t)+E_{total}^{\frac32}(t).
		\end{align*}
	\end{lem}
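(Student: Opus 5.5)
We subtract the averaged equations \eqref{equ ba}, \eqref{equ bu}, \eqref{equ bB} from \eqref{equ pcmhd} to obtain the evolution of $(\widetilde a,\widetilde{\mathbf u},\widetilde{\mathbf B})$. Its linear part is \eqref{equ pcmhd lin}, restricted to the oscillatory sector, where $\|\widetilde{\mathbf B}\|_{H^k}\lesssim\|\partial_{x_3}\mathbf B\|_{H^k}$ and $\|\widetilde{\mathbf u}\|_{L^2}\lesssim\|\partial_{x_3}\mathbf u\|_{L^2}$. We then perform a weighted $\dot H^{s-1}$ energy estimate with time weight $(\mathcal T+\tau)^{1-\sigma}$ and spatial weights depending only on $\overline a$. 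Concretely, we test the $\widetilde a$ equation against $\frac{P'(1+\overline a)}{(1+\overline a)^2}\nabla^{s-1}\widetilde a$. We test the $\widetilde{\mathbf u}$ equation, written in the form $\partial_t\widetilde{\mathbf u}-\frac{1}{1+\overline a}(\mu\Delta+(\lambda+\mu)\nabla\operatorname{div})\widetilde{\mathbf u}+\cdots$, against $\nabla^{s-1}\widetilde{\mathbf u}$. Equivalently, we keep the momentum form with weight $1+\overline a$. Finally, we test the $\widetilde{\mathbf B}$ equation against $\frac{1}{1+\overline a}\nabla^{s-1}\widetilde{\mathbf B}$. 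These weights are chosen so that three pairs of top-order terms cancel exactly after integration by parts, because $\overline a$ is independent of $x_3$ and $\partial_{x_3}$ passes through it. The pairs are the pressure coupling $\nabla\widetilde{P(1+a)}$ with $\operatorname{div}\widetilde{\mathbf u}$, the linear terms $\partial_{x_3}\widetilde{\mathbf B}-\nabla\widetilde B_3$ with $\partial_{x_3}\widetilde{\mathbf u}-\mathbf e_3\operatorname{div}\widetilde{\mathbf u}$, and the top-order magnetic terms $\overline{\mathbf B}\cdot\nabla\nabla^{s-1}\widetilde{\mathbf B}$ and $\overline{\mathbf B}\nabla\nabla^{s-1}\widetilde{\mathbf B}$ against $\overline{\mathbf B}\cdot\nabla\nabla^{s-1}\widetilde{\mathbf u}$ and $\overline{\mathbf B}\operatorname{div}\nabla^{s-1}\widetilde{\mathbf u}$. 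The last pair is exactly the residual flagged in the introduction.

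The time derivative of the weight produces the term $(1-\sigma)(\mathcal T+\tau)^{-\sigma}\|(\widetilde a,\widetilde{\mathbf u},\widetilde{\mathbf B})\|^2_{\dot H^{s-1}}$, which has no sign. Its $\widetilde{\mathbf u}$ part is absorbed by viscous dissipation via Poincar\'e, taking $\mathcal T$ large. For its $(\widetilde a,\widetilde{\mathbf B})$ part, $\widetilde a$ is written through $\widetilde d$ and $\widetilde{\mathbf B}$ is bounded by $\partial_{x_3}\mathbf B$ in $H^{s-1}$. Only $H^{s-2}$ of these lies in $N_1$, so we instead interpolate: $\|\widetilde{\mathbf B}\|^2_{H^{s-1}}\le\|\widetilde{\mathbf B}\|_{H^{s-2}}\|\widetilde{\mathbf B}\|_{H^s}$. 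Then $(\mathcal T+\tau)^{-\sigma}$ splits as $(\mathcal T+\tau)^{\frac{1-\sigma}{2}}\cdot(\mathcal T+\tau)^{\frac{-1-\sigma}{2}}$, giving $\int(\mathcal T+\tau)^{\frac{1-\sigma}{2}}\|\partial_{x_3}\mathbf B,\widetilde d\|_{H^{s-2}}(\mathcal T+\tau)^{\frac{-1-\sigma}{2}}\|(a,\mathbf B)\|_{H^s}\,d\tau\lesssim E_0^{1/2}N_1^{1/2}$. This is the source of the term $E_0^{\frac12}N_1^{\frac12}$ in the statement. The initial contribution gives $\mathcal T^{1-\sigma}\|(a_0,\mathbf u_0,\mathbf B_0)\|^2_{H^{s-1}}$.

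The remaining terms are commutators from the weights, lower-order Leibniz terms, products with an oscillatory factor, and the $\widetilde{\ }$-projections of $\mathbf u\cdot\nabla\mathbf B$, $I(a)\Delta\mathbf u$, $\mathbf B\cdot\nabla\mathbf B$ and $\frac12\nabla|\mathbf B|^2$. We handle them as in Lemmas \ref{est E0} and \ref{est bE1}. Each product is split into average times oscillation, and Proposition \ref{lem bftf} is used for $\widetilde{I(a)}$ and $\widetilde{P(1+a)}$. Every term then contains at least one decaying factor, namely $\nabla\mathbf u$, $\widetilde d$, $\partial_{x_3}\mathbf B$, or $\widetilde{\mathbf B}_h$ together with $\overline{u}_3=0$ from the symmetry. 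The terms are bounded by $E_{total}^{3/2}$, using the weights of $E_1,E_2,N_1$ and the $(\mathcal T+\tau)^{-1-\sigma}$ integrability of $\|(a,\mathbf B)\|_{H^s}$ from $E_0$ whenever $s$ derivatives land on $a$ or $\mathbf B$. The time derivative of the spatial weight is handled with $\partial_t\overline a$ from \eqref{equ ba}.

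The main obstacle is the top-order magnetic and density interaction. After the weighted cancellation, we must check that every surviving commutator, for instance $[\nabla^{s-1},\overline{\mathbf B}\cdot\nabla]\widetilde{\mathbf B}$ and the terms with $\nabla\overline a$ from differentiating the weights, loses no derivative on $\widetilde{\mathbf B}$. We must also check that the interaction $\widetilde{\mathbf B}\cdot\nabla\nabla^{s-1}\widetilde{\mathbf B}$ between oscillatory fields is controlled by writing $\widetilde{\mathbf B}=\widetilde{\mathbf B}_h+\widetilde B_3$ and integrating by parts onto $\widetilde{\mathbf u}$, where viscosity pays the derivative. I expect this bookkeeping, rather than the linear structure, to require care.
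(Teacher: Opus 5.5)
Your plan matches the paper's proof: the same weights $\frac{P'(1+\overline a)}{(1+\overline a)^2}$ and $\frac{1}{1+\overline a}$ on $\widetilde a$ and $\widetilde{\mathbf B}$, the $\rho$-divided velocity equation tested against $\nabla^{s-1}\widetilde{\mathbf u}$ without weight, the same three top-order cancellations, and the same interpolation $\|\cdot\|^2_{H^{s-1}}\le\|\cdot\|_{H^{s-2}}\|\cdot\|_{H^s}$ producing $E_0^{1/2}N_1^{1/2}$. (The paper keeps $\mu\Delta\widetilde{\mathbf u}$ and puts $I(a)\Delta\mathbf u$ in the forcing, which makes no difference.)

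Two small corrections. First, the momentum form weighted by $1+\overline a$ is not equivalent with these weights. It would need $\frac{P'(1+\overline a)}{1+\overline a}$ and $1$ instead; otherwise the non-decaying residual $\frac{\overline a}{1+\overline a}\overline{\mathbf B}\cdot\nabla\nabla^{s-1}\widetilde{\mathbf B}\cdot\nabla^{s-1}\widetilde{\mathbf u}$ from the introduction reappears. Second, terms with $s$ derivatives on $\overline a$ or $\overline{\mathbf B}$, such as $\widetilde{\mathbf u}\cdot\nabla\nabla^{s-1}\overline{\mathbf B}\cdot\nabla^{s-1}\widetilde{\mathbf B}$, need the $(\mathcal T+\tau)^{3-\sigma}$ weight of $E_3$, not only $E_1,E_2,N_1$ and $E_0$.
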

	\begin{proof}
		We begin by establishing the estimate for the oscillatory density $\widetilde{a}$. Applying the operator $\nabla^{s-1}$ to the equation for $\widetilde{a}$, and taking the corresponding weighted $L^2$ inner product, we deduce
		\begin{align*}
			\frac12\frac{d}{d\tau}&\Big[ (\mathcal{T}+\tau)^{1-\sigma}\int_{\mathbb{T}^3}\frac{P^{\prime}(1+\overline{a})}{(1+\overline{a})^2}|\nabla^{s-1} \widetilde{a}|^2dx\Big]+(\mathcal{T}+\tau)^{1-\sigma}\int_{\mathbb{T}^3}\frac{P^{\prime}(1+\overline{a})}{(1+\overline{a})} \nabla^{s-1}\operatorname{div}\widetilde{\mathbf{u}}\cdot \nabla^{s-1} \widetilde{a}dx\\
			=&\frac{1-\sigma}{2} (\mathcal{T}+\tau)^{-\sigma}\int_{\mathbb{T}^3} \frac{P^{\prime}(1+\overline{a})}{(1+\overline{a})^2}|\nabla^{s-1}\widetilde{a}|^2dx+\mathcal{A}_1+\mathcal{A}_2+\mathcal{A}_3, 
		\end{align*}
		where the residual terms are defined as
		\begin{align*}
			\mathcal{A}_1=&\frac12(\mathcal{T}+\tau)^{1-\sigma}\int_{\mathbb{T}^3}\partial_{\tau}\Big(\frac{P^{\prime}(1+\overline{a})}{(1+\overline{a})^2} \Big)|\nabla^{s-1}\widetilde{a}|^2 dx,\\
			\mathcal{A}_2=&-(\mathcal{T}+\tau)^{1-\sigma}\int_{\mathbb{T}^3}\frac{P^{\prime}(1+\overline{a})}{(1+\overline{a})^2} ([\nabla^{s-1},\overline{a}]\operatorname{div}\widetilde{\mathbf{u}} )\cdot\nabla^{s-1} \widetilde{a}dx,\\
			\mathcal{A}_3=&- (\mathcal{T}+\tau)^{1-\sigma}\int_{\mathbb{T}^3}\frac{P^{\prime}(1+\overline{a})}{(1+\overline{a})^2} \nabla^{s-1}(\widetilde{a}\operatorname{div}\mathbf{u}+\widetilde{\mathbf{u}\cdot\nabla a})\cdot \nabla^{s-1}\widetilde{a} dx.
		\end{align*}
		Note that we have decomposed the nonlinear term $\nabla^{s-1}\widetilde{(1+a)\operatorname{div}\mathbf{u}}$ to extract its principal part  $(1+\overline{a})\nabla^{s-1}\operatorname{div}\widetilde{\mathbf{u}}$ which is shifted to the left-hand side to be eliminated later by the equation for $\mathbf{u}$. 
		
		For the first term on the right-hand side, integrating over time and applying  Sobolev interpolation, we have
		\begin{align*}
			\frac{1-\sigma}{2}&\int_{0}^t\int_{\mathbb{T}^3} (\mathcal{T}+\tau)^{-\sigma} \frac{P^{\prime}(1+\overline{a})}{(1+\overline{a})^2}|\nabla^{s-1}\widetilde{a}|^2dxd\tau\\
			\lesssim& \int_{0}^t(\mathcal{T}+\tau)^{-\sigma}\|\widetilde{a} \|_{H^{s-1}}^2d\tau\\
			\lesssim&\int_{0}^t (\mathcal{T}+\tau)^{-\sigma}\|\widetilde{a} \|_{H^{s}}\|\widetilde{a} \|_{H^{s-2}}d\tau\\
			\lesssim&\int_{0}^t (\mathcal{T}+\tau)^{\frac{-1-\sigma}{2}}\|\widetilde{a} \|_{H^{s}}(\mathcal{T}+\tau)^{\frac{1-\sigma}{2}}(\|\widetilde{d} \|_{H^{s-2}}+\|\partial_{x_3}\mathbf{B} \|_{H^{s-2}})d\tau\\
			\lesssim &E_0^{\frac12}(t)N_1^{\frac12}(t).
		\end{align*}
		
		To bound $\mathcal{A}_1$, we substitute the continuity equation for $\overline{a}$ into the time derivative of the weight, which yields
		\begin{align*}
			\int_{0}^t\mathcal{A}_1d\tau=&-\frac12\int_{0}^t\int_{\mathbb{T}^3} (\mathcal{T}+\tau)^{1-\sigma} \Big(\frac{P^{\prime}(1+\overline{a})}{(1+\overline{a})^2} \Big)^{\prime}(\operatorname{div}\overline{\mathbf{u}}+\overline{\mathbf{u}\cdot\nabla a}+\overline{a\operatorname{div}\mathbf{u}} )|\nabla^{s-1}\widetilde{a}|^2 dxd\tau\\
			\lesssim &\sup_{\tau\in[0,t]}(\mathcal{T}+\tau)^{1-\sigma}\|\widetilde{a} \|_{H^{s-1}}^2(1+\|a \|_{H^{3}} ) \int_{0}^t\| \mathbf{u}\|_{H^{3}}d\tau\\
			\lesssim & E_1(t)E_2^{\frac12}(t).
		\end{align*}
		
		For the commutator term $\mathcal{A}_2$, integrating by parts yields
		\begin{align*}
			\int_{0}^t\mathcal{A}_2d\tau=&\int_{0}^t\int_{\mathbb{T}^3}(\mathcal{T}+\tau)^{1-\sigma}\nabla\Big(\frac{P^{\prime}(1+\overline{a})}{(1+\overline{a})^2} [\nabla^{s-1},\overline{a}]\operatorname{div}\widetilde{\mathbf{u}} \Big)\cdot\nabla^{s-2} \widetilde{a}dxd\tau \\
			\lesssim &\sup_{\tau\in[0,t]}\| \overline{a}\|_{H^{3}}\int_{0}^t(\mathcal{T}+\tau)^{1-\sigma}\| \operatorname{div}\widetilde{\mathbf{u}}\|_{H^{s-1}}\|\widetilde{a} \|_{H^{s-2}}d\tau\\
			&+\sup_{\tau\in[0,t]}(\mathcal{T}+\tau)^{-\frac{\sigma}{2}}\| \overline{a}\|_{H^{s}} \int_{0}^t(\mathcal{T}+\tau)^{1-\frac{\sigma}{2}}\|\operatorname{div}\widetilde{\mathbf{u}} \|_{H^{3}} \|\widetilde{a} \|_{H^{s-2}}d\tau\\
			\lesssim & (E_0(t)+E_1(t)+E_2(t))N_1^{\frac12}(t).
		\end{align*}
		
		For the remaining nonlinear term $\mathcal{A}_3$, we decompose it as follows:
		\begin{align*}
			\int_{0}^t\mathcal{A}_3d\tau=&-\int_{0}^t\int_{\mathbb{T}^3} (\mathcal{T}+\tau)^{1-\sigma}\frac{P^{\prime}(1+\overline{a})}{(1+\overline{a})^2} \big(\nabla^{s-1}(\widetilde{a}\operatorname{div}\mathbf{u})+\widetilde{\mathbf{u}\cdot\nabla \nabla^{s-1} a}\big) \cdot \nabla^{s-1}\widetilde{a} dxd\tau\\
			&+\int_{0}^t\int_{\mathbb{T}^3}(\mathcal{T}+\tau)^{1-\sigma}\frac{P^{\prime}(1+\overline{a})}{(1+\overline{a})^2} \nabla[\nabla^{s-1 },\mathbf{u}\cdot\nabla ]a\cdot\nabla^{s-2}\widetilde{a}dxd\tau\\
			\lesssim &\int_{0}^t (\mathcal{T}+\tau)^{1-\sigma}\big(\|\widetilde{a} \|_{H^{s-1}}\|\operatorname{div}\mathbf{u} \|_{H^{2}}+\|\widetilde{a} \|_{H^{2}}\|\operatorname{div}\widetilde{\mathbf{u}} \|_{H^{s-1}}+\|\mathbf{u} \|_{H^{2}}\|a \|_{H^{s}}\big)\|\widetilde{a} \|_{H^{s-1}}d\tau\\
			&+\int_{0}^t(\mathcal{T}+\tau)^{1-\sigma}\big(\|\nabla\mathbf{u} \|_{H^{s-1}}\|a \|_{H^{4}}+\|\widetilde{\mathbf{u}} \|_{H^{3}}\|a \|_{H^{s}}\big)\|\widetilde{a} \|_{H^{s-2}}d\tau\\
			\lesssim &\sup_{\tau\in[0,t]} (\mathcal{T}+\tau)^{1-\sigma}\|\widetilde{a} \|_{H^{s-1}}^2\int_{0}^t \| \nabla\mathbf{u}\|_{H^{2}}d\tau\\
			&+\sup_{\tau\in[0,t]}\|a \|_{H^{s-1}}\int_{0}^t(\mathcal{T}+\tau)^{1-\sigma}\|\widetilde{a} \|_{H^{s-2}} \| \nabla\mathbf{u}\|_{H^{s-1}}d\tau\\
			&+ \sup_{\tau\in[0,t]}(\mathcal{T}+\tau)^{\frac12-\sigma}\|a\|_{H^{s}}\| \widetilde{ a} \|_{H^{s-1}}\int_{0}^t(\mathcal{T}+\tau)^{\frac12} \| \nabla\mathbf{u}\|_{H^{3}} d\tau\\
			\lesssim & E_1(t)E_2^{\frac12}(t)+E_1(t)N_1^{\frac12}(t)+E_0^{\frac12}(t)E_1^{\frac12}(t)E_3^{\frac12}(t).
		\end{align*}
		
		Combining all the above estimates and integrating over time, we obtain
		\begin{align}\label{est ta hs-1}
			\frac12(\mathcal{T}+t)^{1-\sigma}&\int_{\mathbb{T}^3}\frac{P^{\prime}(1+\overline{a})}{(1+\overline{a})^2}|\nabla^{s-1} \widetilde{a}|^2dx\\
			&+\int_{0}^t\int_{\mathbb{T}^3}(\mathcal{T}+\tau)^{1-\sigma}\frac{P^{\prime}(1+\overline{a})}{(1+\overline{a})} \nabla^{s-1}\operatorname{div}\widetilde{\mathbf{u}}\cdot \nabla^{s-1} \widetilde{a}dxd\tau\nonumber\\
			\lesssim & \mathcal{T}^{1-\sigma}\|a_0 \|_{H^{s-1}}^2+ (E_0(t)+E_1(t)+E_2(t)+E_3(t)+N_1(t))^{\frac32}+E_0^{\frac12}(t)N^{\frac12}_1(t).\nonumber
		\end{align}
		
		Next, we consider the evolution equation for the oscillatory magnetic field $\widetilde{\mathbf{B}}$:
		\begin{align*}
			\partial_t\widetilde{\mathbf{B}}+\mathbf{e}_3\widetilde{\operatorname{div} \mathbf{u}}-\partial_{x_3} \mathbf{u}=-\widetilde{\mathbf{u}\cdot \nabla \mathbf{B}}+\widetilde{\mathbf{B}\cdot \nabla \mathbf{u}}-\widetilde{\mathbf{B}\operatorname{div}\mathbf{u}}.
		\end{align*}
		Applying the operator $\nabla^{s-1}$ and taking the $L^2$ inner product with the weighted function $\frac{1}{1+\overline{a}}\nabla^{s-1}\widetilde{\mathbf{B}}$, we derive that
		\begin{align*}
			\frac{1}{2}\frac{d}{d\tau}&\Big[(\mathcal{T}+\tau)^{1-\sigma}\int_{\mathbb{T}^3}\frac{1}{1+\overline{a}}|  \nabla^{s-1}\widetilde{\mathbf{B}} |^2dx\Big]\\
			&-(\mathcal{T}+\tau)^{1-\sigma}\int_{\mathbb{T}^3}\frac{1}{1+\overline{a}}\nabla^{s-1}(\partial_{x_3}\mathbf{u}-\mathbf{e}_3\operatorname{div} \widetilde{\mathbf{u}})\cdot\nabla^{s-1}\widetilde{\mathbf{B}}dx\\
			=&\mathcal{B}_1+\mathcal{B}_2+\mathcal{B}_3,
		\end{align*}
		where 
		\begin{align*}
			\mathcal{B}_1=&\frac{1-\sigma}{2}(\mathcal{T}+\tau)^{-\sigma}\int_{\mathbb{T}^3}\frac{1}{1+\overline{a}}|  \nabla^{s-1}\widetilde{\mathbf{B}} |^2dx,\\
			\mathcal{B}_2=&-\frac12(\mathcal{T}+\tau)^{1-\sigma}\int_{\mathbb{T}^3}\frac{\partial_t\overline{a}}{(1+\overline{a})^2}|  \nabla^{s-1}\widetilde{\mathbf{B}} |^2dx,\\
			\mathcal{B}_3=&(\mathcal{T}+\tau)^{1-\sigma}\int_{\mathbb{T}^3}\frac{1}{1+\overline{a}}\nabla^{s-1}(-\widetilde{\mathbf{u}\cdot \nabla \mathbf{B}}+\widetilde{\mathbf{B}\cdot \nabla \mathbf{u}}-\widetilde{\mathbf{B}\operatorname{div}\mathbf{u}} )\cdot\nabla^{s-1}\widetilde{\mathbf{B}} dx.
		\end{align*}
		Note that the second term on the left-hand side is kept here to be cancelled later by the linear part in the equation for $\mathbf{u}$.
		
		From the definition of $E_i(t)$, we have
		\begin{align*}
			\int_{0}^t\mathcal{B}_1d\tau=&\frac{1-\sigma}{2}\int_{0}^t\int_{\mathbb{T}^3}(\mathcal{T}+\tau)^{-\sigma}\frac{1}{1+\overline{a}}|  \nabla^{s-1}\widetilde{\mathbf{B}} |^2dxd\tau\\
			\lesssim &\int_{0}^t\int_{\mathbb{T}^3}(\mathcal{T}+\tau)^{-\sigma}|  \nabla^{s-1}\widetilde{\mathbf{B}} |^2dxd\tau\\
			\lesssim & \int_{0}^t(\mathcal{T}+\tau)^{\frac{1-\sigma}{2}}\|\widetilde{\mathbf{B}} \|_{H^{s-2}}(\mathcal{T}+\tau)^{\frac{-1-\sigma}{2}}\|\widetilde{\mathbf{B}} \|_{H^{s}}d\tau\\
			\lesssim & E_0^{\frac12}(t)N_1^{\frac12}(t).
		\end{align*} 
		
		To estimate $\mathcal{B}_2$, we invoke the evolution equation for $\overline{a}$ and apply standard Sobolev embedding, 
		\begin{align*}
			\int_{0}^t\mathcal{B}_2d\tau=& -\frac12\int_{0}^t\int_{\mathbb{T}^3}(\mathcal{T}+\tau)^{1-\sigma}\frac{\partial_{\tau}\overline{a}}{(1+\overline{a})^2}|  \nabla^{s-1}\widetilde{\mathbf{B}} |^2dxd\tau\\
			=& \frac12\int_{0}^t\int_{\mathbb{T}^3}(\mathcal{T}+\tau)^{1-\sigma}\frac{1}{(1+\overline{a})^2}(\operatorname{div}\overline{\mathbf{u}}+\overline{\mathbf{u}\cdot \nabla a}+\overline{a\operatorname{div}\mathbf{u}} )|  \nabla^{s-1}\widetilde{\mathbf{B}} |^2dxd\tau\\
			\lesssim & \sup_{\tau\in[0,t]}(\mathcal{T}+\tau)^{1-\sigma}\| \widetilde{\mathbf{B}}\|^2_{H^{s-1}}\int_{0}^t\| \nabla \mathbf{u}\|_{H^{2}}d\tau\\
			\lesssim & E_1(t)E_2^{\frac12}(t).
		\end{align*}
		
		For the last term $\mathcal{B}_3$, we decompose it into two parts:
		\begin{align*}
			\mathcal{B}_3=&(\mathcal{T}+\tau)^{1-\sigma}\int_{\mathbb{T}^3}\frac{1}{1+\overline{a}}\nabla^{s-1}(-\widetilde{\mathbf{u}\cdot \nabla \mathbf{B}})\cdot\nabla^{s-1}\widetilde{\mathbf{B}} dx\\
			&+(\mathcal{T}+\tau)^{1-\sigma}\int_{\mathbb{T}^3}\frac{1}{1+\overline{a}}\nabla^{s-1}(\widetilde{\mathbf{B}\cdot \nabla \mathbf{u}}-\widetilde{\mathbf{B}\operatorname{div}\mathbf{u}} )\cdot\nabla^{s-1}\widetilde{\mathbf{B}} dx\\
			=&  \mathcal{B}_{3,1}+\mathcal{B}_{3,2}.
		\end{align*}
		Using integration by parts, $\mathcal{B}_{3,1}$ is bounded by
		\begin{align*}
			\mathcal{B}_{3,1}=&-(\mathcal{T}+\tau)^{1-\sigma}\int_{\mathbb{T}^3}\frac{1}{1+\overline{a}}\nabla^{s-1}(\widetilde{\mathbf{u}}\cdot \nabla \overline{\mathbf{B}}+\mathbf{u}\cdot \nabla \widetilde{\mathbf{B}})\cdot\nabla^{s-1}\widetilde{\mathbf{B}} dx\\
			=&  (\mathcal{T}+\tau)^{1-\sigma}\sum_{k=2}^{s-1}\binom{s-1}{k}\int_{\mathbb{T}^3}\nabla\Big(\frac{1}{1+\overline{a}}(\nabla^{k}\widetilde{\mathbf{u}}\cdot \nabla \nabla^{s-1-k}\overline{\mathbf{B}})\Big)\cdot\nabla^{s-2}\widetilde{\mathbf{B}} dx\\
			&-(\mathcal{T}+\tau)^{1-\sigma} \int_{\mathbb{T}^3} \frac{1}{1+\overline{a}}( \widetilde{\mathbf{u}}\cdot \nabla \nabla^{s-1 }\overline{\mathbf{B}}+(s-1)\nabla \widetilde{\mathbf{u}}\cdot \nabla \nabla^{s-2}\overline{\mathbf{B}} )\cdot\nabla^{s-1}\widetilde{\mathbf{B}} dx\\
			&-(\mathcal{T}+\tau)^{1-\sigma}\sum_{k=1}^{s-1}\binom{s-1}{k}\int_{\mathbb{T}^3}\frac{1}{1+\overline{a}}( \nabla^{k}\mathbf{u}\cdot\nabla\nabla^{s-1-k}  \widetilde{\mathbf{B}})\cdot\nabla^{s-1}\widetilde{\mathbf{B}} dx\\
			&+\frac12(\mathcal{T}+\tau)^{1-\sigma}\int_{\mathbb{T}^3}\operatorname{div}\big(\frac{\mathbf{u} }{1+\overline{a}} \big) |\nabla^{s-1}\widetilde{\mathbf{B}}|^2 dx\\
			\lesssim & (\mathcal{T}+\tau)^{1-\sigma}\|\overline{\mathbf{B}} \|_{H^{s-1}}\|\nabla\mathbf{u} \|_{H^{s-1}}\|\widetilde{\mathbf{B}} \|_{H^{s-2}}\\
			&+(\mathcal{T}+\tau)^{1-\sigma}(\|\overline{\mathbf{B}}\|_{H^{s}}\|  \mathbf{u}\|_{H^{2}}+\|\overline{\mathbf{B}}\|_{H^{s-1}}\| \nabla \mathbf{u}\|_{H^{2}})\| \widetilde{\mathbf{B}}\|_{H^{s-1}}\\
			&+ (\mathcal{T}+\tau)^{1-\sigma}(\|\nabla\mathbf{u} \|_{H^{s-1}}\|\widetilde{\mathbf{B}} \|_{H^{3}}+\|\mathbf{u} \|_{H^{3}}\|\widetilde{\mathbf{B}} \|_{H^{s-1}})\|\widetilde{\mathbf{B}} \|_{H^{s-1}}\\
			\lesssim& (\mathcal{T}+\tau)^{1-\sigma}(\|\overline{\mathbf{B}} \|_{H^{s-1}}\|\nabla\mathbf{u} \|_{H^{s-1}}\|\widetilde{\mathbf{B}} \|_{H^{s-2}} +\|\mathbf{B}\|_{H^{s}}\| \nabla \mathbf{u}\|_{H^{2}}\| \widetilde{\mathbf{B}}\|_{H^{s-1}}).
		\end{align*}
		
		In $\mathcal{B}_{3,2}$, the highest-order derivatives of $\widetilde{\mathbf{u}}$ produce two bad terms: 
		\begin{align}\label{equ bad b2}
			(\mathcal{T}+\tau)^{1-\sigma}\int_{\mathbb{T}^3}\frac{1}{1+\overline{a}}(\overline{\mathbf{B}}\cdot \nabla \nabla^{s-1}\widetilde{\mathbf{u}}-\overline{\mathbf{B}}\nabla^{s-1}\operatorname{div}\widetilde{\mathbf{u}} )\cdot\nabla^{s-1}\widetilde{\mathbf{B}} dx.
		\end{align}
		However, these terms are exactly cancelled by the corresponding terms arising in the estimate for $\widetilde{\mathbf{u}}$:
		\begin{align*}
			(\mathcal{T}+\tau)^{1-\sigma}&\int_{\mathbb{T}^3}\overline{\frac{1}{1+a}}\nabla^{s-1}(\widetilde{\mathbf{B}\cdot\nabla \mathbf{B}}-\frac12\widetilde{\nabla |\mathbf{B}|^2})\nabla^{s-1}\widetilde{\mathbf{u}} dx\\
			&=-(\mathcal{T}+\tau)^{1-\sigma}\int_{\mathbb{T}^3}\frac{1}{1+\overline{a}}(\overline{\mathbf{B}}\cdot \nabla \nabla^{s-1}\widetilde{\mathbf{u}}-\overline{\mathbf{B}}\nabla^{s-1}\operatorname{div}\widetilde{\mathbf{u}} )\cdot\nabla^{s-1}\widetilde{\mathbf{B}} dx+
			\cdots.
		\end{align*}
		Subtracting these bad terms, the remaining lower-order terms in $\mathcal{B}_{3,2}$ are bounded as follows:
		\begin{align*}
			\mathcal{B}_{3,2}&-(\mathcal{T}+\tau)^{1-\sigma}\int_{\mathbb{T}^3}\frac{1}{1+\overline{a}}(\overline{\mathbf{B}}\cdot \nabla \nabla^{s-1}\widetilde{\mathbf{u}}-\overline{\mathbf{B}}\nabla^{s-1}\operatorname{div}\widetilde{\mathbf{u}} )\cdot\nabla^{s-1}\widetilde{\mathbf{B}} dx\\
			=& (\mathcal{T}+\tau)^{1-\sigma}\int_{\mathbb{T}^3}\frac{1}{1+\overline{a}}\nabla^{s-1}(\widetilde{\mathbf{B}}\cdot \nabla \mathbf{u}-\widetilde{\mathbf{B}}\operatorname{div}\mathbf{u} )\cdot\nabla^{s-1}\widetilde{\mathbf{B}} dx\\
			&-(\mathcal{T}+\tau)^{1-\sigma}\sum_{k=1}^{s-1}\binom{s-1}{k}\int_{\mathbb{T}^3}\nabla\Big(\frac{1}{1+\overline{a}} \nabla^{k}\overline{\mathbf{B}}\cdot \nabla \nabla^{s-1-k}\widetilde{\mathbf{u}}\Big)\cdot\nabla^{s-2}\widetilde{\mathbf{B}} dx\\
			&+(\mathcal{T}+\tau)^{1-\sigma}\sum_{k=1}^{s-1}\binom{s-1}{k}\int_{\mathbb{T}^3}\nabla\Big(\frac{1}{1+\overline{a}} \nabla^{k}\overline{\mathbf{B}}\nabla^{s-1-k}\operatorname{div}\widetilde{\mathbf{u}}\Big)\cdot\nabla^{s-2}\widetilde{\mathbf{B}} dx\\
			\lesssim  &(\mathcal{T}+\tau)^{1-\sigma}(\| \widetilde{\mathbf{B}}\|_{H^{2}}\|\nabla \mathbf{u} \|_{H^{s-1}}+\| \widetilde{\mathbf{B}}\|_{H^{s-1}}\|\nabla \mathbf{u} \|_{H^{2}} )\| \widetilde{\mathbf{B}}\|_{H^{s-1}} \\
			&+(\mathcal{T}+\tau)^{1-\sigma}(\| \overline{\mathbf{B}}\|_{H^{4}}\|\nabla \mathbf{u} \|_{H^{s-1}}+\| \overline{\mathbf{B}}\|_{H^{s}}\|\nabla \mathbf{u} \|_{H^{3}} )\| \widetilde{\mathbf{B}}\|_{H^{s-2}} .
		\end{align*}
		
		Combining the estimates of $\mathcal{B}_{3,1}$ and $\mathcal{B}_{3,2}$, we obtain
		\begin{align*}
			\int_{0}^t&\mathcal{B}_3d\tau-\int_{0}^t\int_{\mathbb{T}^3}(\mathcal{T}+\tau)^{1-\sigma}\frac{1}{1+\overline{a}}(\overline{\mathbf{B}}\cdot \nabla \nabla^{s-1}\widetilde{\mathbf{u}}-\overline{\mathbf{B}}\nabla^{s-1}\operatorname{div}\widetilde{\mathbf{u}} )\cdot\nabla^{s-1}\widetilde{\mathbf{B}} dxd\tau\\
			\lesssim & \sup_{\tau\in[0,t]}\|\overline{\mathbf{B}} \|_{H^{s-1}}	\int_{0}^t (\mathcal{T}+\tau)^{1-\sigma}\|\nabla\mathbf{u} \|_{H^{s-1}}\|\widetilde{\mathbf{B}} \|_{H^{s-2}} d\tau\\
			&+\sup_{\tau\in[0,t]}(\mathcal{T}+\tau)^{\frac{1-\sigma}{2}}\| \widetilde{\mathbf{B}}\|_{H^{s-1}} 	\int_{0}^t(\mathcal{T}+\tau)^{\frac{-1-\sigma}{2}} \|\mathbf{B}\|_{H^{s}}(\mathcal{T}+\tau)  \| \nabla \mathbf{u}\|_{H^{2}} d\tau\\
			&+\sup_{\tau\in[0,t]}(\mathcal{T}+\tau)^{\frac{1-\sigma}{2}}\| \widetilde{\mathbf{B}}\|_{H^{s-1}}\int_{0}^t(\mathcal{T}+\tau)^{\frac{1-\sigma}{2}}(\| \widetilde{\mathbf{B}}\|_{H^{2}}\|\nabla \mathbf{u} \|_{H^{s-1}}+\| \widetilde{\mathbf{B}}\|_{H^{s-1}}\|\nabla \mathbf{u} \|_{H^{2}} )d\tau \\
			&+\sup_{\tau\in[0,t]}(\mathcal{T}+\tau)^{ -\frac{\sigma}{2}}\| \overline{\mathbf{B}}\|_{H^{s}}\int_{0}^t(\mathcal{T}+\tau)^{\frac{1-2\sigma}{2}}\|\nabla \mathbf{u} \|_{H^{3}}(\mathcal{T}+\tau)^{\frac{1- \sigma}{2}} \| \widetilde{\mathbf{B}}\|_{H^{s-2}}d\tau\\
			\lesssim& (E_0(t)+E_1(t)+E_2(t)+E_3(t)+N_1(t))^{\frac32} .
		\end{align*}
		
		Collecting these estimates and integrating over time, it follows that
		\begin{align}\label{est tb hs-1}
			\frac{1}{2}&\Big[(\mathcal{T}+t)^{1-\sigma}\int_{\mathbb{T}^3}\frac{1}{1+\overline{a}}|  \nabla^{s-1}\widetilde{\mathbf{B}} |^2dx\Big]\\
			&-\int_{0}^{t}\int_{\mathbb{T}^3}(\mathcal{T}+\tau)^{1-\sigma}\frac{1}{1+\overline{a}}\nabla^{s-1}(\partial_{x_3}\mathbf{u}-\mathbf{e}_3\operatorname{div} \widetilde{\mathbf{u}})\cdot\nabla^{s-1}\widetilde{\mathbf{B}}dxd\tau\nonumber\\
			&-\int_{0}^{t}\int_{\mathbb{T}^3}(\mathcal{T}+\tau)^{1-\sigma}\frac{1}{1+\overline{a}}(\overline{\mathbf{B}}\cdot \nabla \nabla^{s-1}\widetilde{\mathbf{u}}-\overline{\mathbf{B}}\nabla^{s-1}\operatorname{div}\widetilde{\mathbf{u}} )\cdot\nabla^{s-1}\widetilde{\mathbf{B}} dxd\tau\nonumber\\
			\lesssim & \mathcal{T}^{1-\sigma}\|(a_0,\mathbf{B}_0) \|_{H^{s-1}}^2+E_0^{\frac12}(t)N_1^{\frac12}(t)+(E_0(t)+E_1(t)+E_2(t)+E_3(t)+N_1(t))^{\frac32}.\nonumber
		\end{align}
		
		We now proceed to the momentum equation for the oscillatory velocity $\widetilde{\mathbf{u}}$:
		\begin{align*}
			\partial_t \widetilde{\mathbf{u}}  -
			\mu\Delta  \widetilde{\mathbf{u}}-(\lambda+\mu)\nabla (\operatorname{div} \widetilde{\mathbf{u}})+ \widetilde{h(a)\nabla a}
			-\widetilde{\rho^{-1}\partial_{x_3} \mathbf{B}}
			+\widetilde{\rho^{-1}\nabla  B_3}=\widetilde{F_{\mathbf{u}}},
		\end{align*} 
		where $h(a)=\frac{P^{\prime}(1+a)}{1+a}$ and the nonlinear term is given by
		\begin{align*}
			F_{\mathbf{u}}=&-\mathbf{u}\cdot \nabla \mathbf{u}+\rho^{-1}(\mathbf{B}\cdot \nabla \mathbf{B}-\frac12\nabla |\mathbf{B}|^2) -\mu I(a)\Delta \mathbf{u}-(\lambda +\mu) I(a)\nabla\operatorname{div}\mathbf{u}.
		\end{align*}

		Applying the operator $\nabla^{s-1}$ and taking the appropriately time-weighted $L^2$ inner product against $\nabla^{s-1}\widetilde{\mathbf{u}}$, we deduce that
		\begin{align*}
			\frac12\frac{d}{d\tau} & \Big((\mathcal{T}+\tau)^{1-\sigma}\|\widetilde{\mathbf{u}} \|^2_{\dot{H}^{s-1}}\Big)  -\frac{1-\sigma}{2}(\mathcal{T}+\tau)^{-\sigma}\|\widetilde{\mathbf{u}} \|^2_{\dot{H}^{s-1}}+
			\mu(\mathcal{T}+\tau)^{1-\sigma}\|\nabla \widetilde{\mathbf{u}} \|^2_{\dot{H}^{s-1}}\\
			&+
			(\lambda+\mu)(\mathcal{T}+\tau)^{1-\sigma}\| \operatorname{div}  \widetilde{\mathbf{u}} \|^2_{\dot{H}^{s-1}} 
			\\
			=&(\mathcal{T}+\tau)^{1-\sigma}\int_{\mathbb{T}^3}\nabla^{s-1}\Big[-\widetilde{h(a)\nabla a}
			+(\widetilde{\rho^{-1}\partial_{x_3} \mathbf{B}}
			-\widetilde{\rho^{-1}\nabla  B_3})+ \widetilde{F_{\mathbf{u}}} \Big]\cdot\nabla^{s-1}\widetilde{\mathbf{u}}dx\\
			=&(\mathcal{T}+\tau)^{1-\sigma}(\mathcal{U}_1+\mathcal{U}_2)+(\mathcal{T}+\tau)^{1-\sigma}\int_{\mathbb{T}^3}\nabla^{s-1}\widetilde{F_{\mathbf{u}}}\cdot\nabla^{s-1}\widetilde{\mathbf{u}}dx,
		\end{align*}
		the linear coupling terms are denoted by
		\begin{align*}
			\mathcal{U}_1=&-\int_{\mathbb{T}^3}\nabla^{s-1}\big(\widetilde{h(a)\nabla a}
			\big)\cdot\nabla^{s-1}\widetilde{\mathbf{u}}dx,\\
			\mathcal{U}_2=& \int_{\mathbb{T}^3}\nabla^{s-1}\big(
			\widetilde{\rho^{-1}\partial_{x_3} \mathbf{B}}
			-\widetilde{\rho^{-1}\nabla  B_3} \big)\cdot\nabla^{s-1}\widetilde{\mathbf{u}}dx.
		\end{align*}
		
		To absorb the negative term arising from the time weight on the left-hand side, we utilize the dissipative term via the Poincar\'e inequality:
		\begin{align*}
			\frac{1-\sigma}{2}(\mathcal{T}+\tau)^{-\sigma}\|\widetilde{\mathbf{u}} \|^2_{H^{s-1}}\leq \frac{1-\sigma}{2\mathcal{T}}(\mathcal{T}+\tau)^{1-\sigma}\|\nabla \widetilde{\mathbf{u}} \|^2_{H^{s-1}}\\
			\leq \frac{\mu}{4}(\mathcal{T}+\tau)^{1-\sigma}\|\nabla \widetilde{\mathbf{u}} \|^2_{H^{s-1}},
		\end{align*}
		provided the time shift is chosen large enough such that $\mathcal{T} \geq \frac{2(1-\sigma)}{\mu}$. 
		
		To handle $\mathcal{U}_1$, we invoke the Taylor expansions \eqref{equ bf} and \eqref{equ tf} from Proposition \ref{lem bftf} to extract the principal linear coupling. This allows us to rewrite $\mathcal{U}_1$ as
		\begin{align*}
			\mathcal{U}_1
			=&-\int_{\mathbb{T}^3}\nabla^{s-1}\Big[\overline{h(a)}  \nabla \widetilde{a}+\widetilde{h(a)} \nabla a \Big]\cdot\nabla^{s-1}\widetilde{\mathbf{u}}dx\\
			=&-\int_{\mathbb{T}^3}\nabla^{s-1}\Big[\big(h(\overline{a})+\frac{1}{2}h^{\prime\prime}(\overline{a})\overline{\widetilde{a}^2} +\overline{R_{h}(\overline{a},\widetilde{a})\widetilde{a}^3}\big)\nabla \widetilde{a} \Big]\cdot\nabla^{s-1}\widetilde{\mathbf{u}}dx\\
			&+\int_{\mathbb{T}^3}\nabla^{s-2}\big(\widetilde{h(a)}  \nabla a \big)\cdot\nabla^{s}\widetilde{\mathbf{u}}dx\\
			=&\int_{\mathbb{T}^3}h(\overline{a})\nabla^{s-1} \widetilde{a} \nabla^{s-1}\operatorname{div}\widetilde{\mathbf{u}} dx-\int_{\mathbb{T}^3}\nabla^{s-2} \widetilde{a}\nabla\big(\nabla h(\overline{a})\cdot  \nabla^{s-1}\widetilde{\mathbf{u}}\big)dx\\
			&+\sum_{k=1}^{s-2}\binom{s-1}{k}\int_{\mathbb{T}^3}\nabla^{s-1-k} \widetilde{a}\nabla \cdot(\nabla^{k}h(\overline{a}) \nabla^{s-1}\widetilde{\mathbf{u}}) dx-\int_{\mathbb{T}^3}\nabla^{s-1}h(\overline{a})  \nabla\widetilde{a} \cdot  \nabla^{s-1}\widetilde{\mathbf{u}}  dx\\
			& +\int_{\mathbb{T}^3}\nabla^{s-2}\Big[\big(\frac{1}{2}h^{\prime\prime}(\overline{a})\overline{\widetilde{a}^2} +\overline{R_{h}(\overline{a},\widetilde{a})\widetilde{a}^3}\big)\nabla \widetilde{a}+\widetilde{h(a)} \nabla a \Big]\cdot\nabla^{s}\widetilde{\mathbf{u}}dx,
		\end{align*}
		where the third equality follows from applying integration by parts twice:
		\begin{align*}
			-\int_{\mathbb{T}^3}&h(\overline{a})\nabla\nabla^{s-1} \widetilde{a}\cdot\nabla^{s-1}\widetilde{\mathbf{u}}dx\\
			=&\int_{\mathbb{T}^3}h(\overline{a})\nabla^{s-1} \widetilde{a} \cdot\nabla^{s-1}\operatorname{div}\widetilde{\mathbf{u}} dx+\int_{\mathbb{T}^3}\nabla^{s-1} \widetilde{a}\nabla h(\overline{a}) \cdot\nabla^{s-1} \widetilde{\mathbf{u}} dx\\
			=&\int_{\mathbb{T}^3}h(\overline{a})\nabla^{s-1} \widetilde{a}\cdot \nabla^{s-1}\operatorname{div}\widetilde{\mathbf{u}} dx-\int_{\mathbb{T}^3}\nabla^{s-2} \widetilde{a}\nabla\big(\nabla h(\overline{a})\cdot  \nabla^{s-1}\widetilde{\mathbf{u}}\big)dx.
		\end{align*}
		Coupling this principal part with the corresponding linear term previously retained in the estimate for $\widetilde{a}$, 
		\begin{align*}
			\mathcal{U}_1-\int_{\mathbb{T}^3}h(\overline{a}) \nabla^{s-1}\operatorname{div}\widetilde{\mathbf{u}} \nabla^{s-1} \widetilde{a}dx
			\lesssim  \|a \|_{H^{s-1}}\|\widetilde{a} \|_{H^{s-2}}\|\nabla\widetilde{\mathbf{u}} \|_{H^{s-1}}.
		\end{align*}
		
		Similarly, combining $\mathcal{U}_2$ with the linear terms arising from the estimate  of $\widetilde{\mathbf{B}}$ yields
		\begin{align*}
			\mathcal{U}_2&+\int_{\mathbb{T}^3}\frac{1}{1+\overline{a}}\nabla^{s-1}(\partial_{x_3}\mathbf{u}-\mathbf{e}_3\operatorname{div} \widetilde{\mathbf{u}})\cdot\nabla^{s-1}\widetilde{\mathbf{B}}dx\\
			=&\sum_{k=2}^{s-1}\binom{s-1}{k}\int_{\mathbb{T}^3}\nabla^{k}
			\frac{1}{1+\overline{a}}\nabla^{s-1-k}(\partial_{x_3} \widetilde{\mathbf{B}}
			-\nabla  \widetilde{\mathbf{B}}_3)\cdot\nabla^{s-1}\widetilde{\mathbf{u}}dx\\
			&-(s-1)\int_{\mathbb{T}^3}\nabla^{s-3}(\partial_{x_3} \widetilde{\mathbf{B}}
			-\nabla  \widetilde{\mathbf{B}}_3)\cdot\nabla\Big(\nabla
			\frac{1}{1+\overline{a}}\nabla^{s-1}\widetilde{\mathbf{u}}\Big)dx\\
			&-\int_{\mathbb{T}^3}\nabla^{s-2}\Big[\Big(\frac{1}{(1+\overline{a})^3}\overline{\widetilde{a}^2}+\overline{\mathcal{R}_{\frac{1}{1+a}}(\overline{a},\widetilde{a})\widetilde{a}^3}\Big)
			(\partial_{x_3} \widetilde{\mathbf{B}}
			-\nabla  \widetilde{\mathbf{B}}_3)\Big]\cdot\nabla^{s}\widetilde{\mathbf{u}}dx\\
			&-\int_{\mathbb{T}^3}\nabla^{s-2}
			\Big[\widetilde{\frac{1}{1+a}}(\partial_{x_3} \mathbf{B}
			-\nabla  B_3)\Big]\cdot\nabla^{s}\widetilde{\mathbf{u}}dx\\
			\lesssim & \|a \|_{H^{s-1}}\|\widetilde{\mathbf{B}} \|_{H^{s-2}}\|\nabla \mathbf{u} \|_{H^{s-1}}+(\|\widetilde{a} \|_{H^{s-2}}\|\mathbf{B} \|_{H^{3}}+\|\widetilde{a} \|_{H^{2}}\| \mathbf{B}\|_{H^{s-1}})\|\nabla \mathbf{u} \|_{H^{s-1}}\\
			\lesssim& \|(a,\mathbf{B}) \|_{H^{s-1}}\|(\widetilde{a},\widetilde{\mathbf{B}}) \|_{H^{s-2}}\|\nabla \mathbf{u} \|_{H^{s-1}}.
		\end{align*}
		
		We now address the nonlinear forcing $\widetilde{F_{\mathbf{u}}}$, decomposing it into convective, magnetic, and viscous components:
		\begin{align*}
			{F_{\mathbf{u}}}={F_{\mathbf{u}}}^{(1)}+{F_{\mathbf{u}}}^{(2)}+{F_{\mathbf{u}}}^{(3)},
		\end{align*}
		where 
		\begin{align*}
			{F_{\mathbf{u}}}^{(1)}=&-\mathbf{u}\cdot \nabla \mathbf{u},\\
			{F_{\mathbf{u}}}^{(2)}=&\rho^{-1}\left(\mathbf{B}\cdot\nabla \mathbf{B}-\frac12\nabla |\mathbf{B}|^2\right) ,\\
			{F_{\mathbf{u}}}^{(3)}=&-\mu I(a)\Delta \mathbf{u}-(\lambda +\mu)I(a)\nabla \operatorname{div}\mathbf{u}.
		\end{align*}
		By Proposition \ref{lem bftf},  the convective part is estimated as
		\begin{align*}
			\int_{\mathbb{T}^3}\nabla^{s-1}\widetilde{F_{\mathbf{u}}}^{(1)}\cdot\nabla^{s-1}\widetilde{\mathbf{u}}dx=&-\int_{\mathbb{T}^3}\nabla^{s-2}\Big(\widetilde{ \mathbf{u}\cdot \nabla \mathbf{u}}\Big) \cdot\nabla^{s }\widetilde{\mathbf{u}}dx\\
			\lesssim & \|\mathbf{u} \|_{H^{3}}\| \nabla \mathbf{u}\|_{H^{s-2}}\| \nabla \widetilde{\mathbf{u}}\|_{H^{s-1}}.
		\end{align*}
		
		To handle the magnetic nonlinearities $\widetilde{F_{\mathbf{u}}}^{(2)}$, we notationally write $\frac12\nabla |\mathbf{B}|^2$ as $\mathbf{B}\nabla\mathbf{B}$ to distinguish magnetic pressure from magnetic tension. Expanding  the density factor $\rho^{-1}$ into its average and oscillatory parts, and integrating the leading term by parts, we deduce
		\begin{align}\label{est tFu2}
			\int_{\mathbb{T}^3}&\nabla^{s-1}\widetilde{F_{\mathbf{u}}}^{(2)}\cdot\nabla^{s-1}\widetilde{\mathbf{u}}dx\\
			=&\int_{\mathbb{T}^3}\nabla^{s-1}\Big(\overline{\frac{1}{1+a}}(\widetilde{ \mathbf{B}\cdot \nabla \mathbf{B}}-\widetilde{ \mathbf{B}  \nabla \mathbf{B}})+\widetilde{\frac{1}{1+a}}(\mathbf{B}\cdot \nabla \mathbf{B}-\mathbf{B}  \nabla \mathbf{B})\Big) \cdot\nabla^{s-1 }\widetilde{\mathbf{u}}dx\nonumber\\
			=&\int_{\mathbb{T}^3}\nabla^{s-1}\Big(\frac{1}{1+\overline{a}}(\overline{\mathbf{B}}\cdot \nabla \widetilde{ \mathbf{B}}-\overline{\mathbf{B}}  \nabla \widetilde{ \mathbf{B}})\Big) \cdot\nabla^{s-1 }\widetilde{\mathbf{u}}dx
			\nonumber\\
			&-\int_{\mathbb{T}^3}\nabla^{s-2}\Big((\frac{\overline{\widetilde{a}^2}}{(1+\overline{a})^3}+\overline{\mathcal{R}_{\frac{1}{1+a}}(\overline{a},\widetilde{a})\widetilde{a}^3} )(\overline{\mathbf{B}}\cdot \nabla \widetilde{ \mathbf{B}}-\overline{\mathbf{B}}  \nabla \widetilde{ \mathbf{B}})\Big) \cdot\nabla^{s }\widetilde{\mathbf{u}}dx\nonumber\\
			&-\int_{\mathbb{T}^3}\nabla^{s-2}\Big(\overline{\frac{1}{1+a}}(\widetilde{\mathbf{B}}\cdot \nabla \mathbf{B}-\widetilde{\mathbf{B}}  \nabla  \mathbf{B})+\widetilde{\frac{1}{1+a}}(\mathbf{B}\cdot \nabla \mathbf{B}-\mathbf{B}  \nabla \mathbf{B})\Big) \cdot\nabla^{s}\widetilde{\mathbf{u}}dx.\nonumber
		\end{align}
		
		For the first term on the right-hand side, integrating by parts yields
		\begin{align*}
			\int_{\mathbb{T}^3}&\nabla^{s-1}\Big(\frac{1}{1+\overline{a}}( \overline{\mathbf{B}}\cdot \nabla \widetilde{ \mathbf{B}}-\overline{\mathbf{B}}  \nabla \widetilde{ \mathbf{B}})\Big) \cdot\nabla^{s-1 }\widetilde{\mathbf{u}}dx\\
			=& \int_{\mathbb{T}^3}\Big(\frac{1}{1+\overline{a}}(\overline{\mathbf{B}} \cdot\nabla\nabla^{s-1} \widetilde{ \mathbf{B}}-\overline{\mathbf{B}}  \nabla \nabla^{s-1}\widetilde{ \mathbf{B}})\Big) \cdot\nabla^{s-1 }\widetilde{\mathbf{u}}dx\\
			&+\int_{\mathbb{T}^3}\Big[\nabla^{s-1},\frac{1}{1+\overline{a}}(\overline{\mathbf{B}} \nabla - \overline{\mathbf{B}}\cdot \nabla)\Big]\widetilde{\mathbf{B}} \cdot\nabla^{s-1 }\widetilde{\mathbf{u}}dx\\
			=& \int_{\mathbb{T}^3}\Big(\frac{1}{1+\overline{a}}(\overline{\mathbf{B}} \operatorname{div}\nabla^{s-1} \widetilde{ \mathbf{u}}-\overline{\mathbf{B}} \cdot \nabla \nabla^{s-1}\widetilde{ \mathbf{u}})\Big) \cdot\nabla^{s-1 }\widetilde{\mathbf{B}}dx\\
			&-\int_{\mathbb{T}^3}\Big( \nabla^{s-2}\widetilde{\mathbf{B}}\cdot\nabla\big(\overline{\mathbf{B}}\cdot\nabla \frac{1}{1+\overline{a}}\nabla^{s-1 }\widetilde{\mathbf{u}}\big)-\nabla^{s-2}\widetilde{\mathbf{B}}\nabla\big(\nabla (\frac{\overline{\mathbf{B}}}{1+\overline{a}})\cdot\nabla^{s-1 }\widetilde{\mathbf{u}}\big)\Big) dx\\
			&-\sum_{k=2}^{s-1}\binom{s-1}{k}\int_{\mathbb{T}^3}\Big(\nabla^{k}\frac{\overline{\mathbf{B}} }{1+\overline{a}}\nabla\nabla^{s-1-k}\widetilde{\mathbf{B}} - \nabla^{k}\frac{\overline{\mathbf{B}} }{1+\overline{a}}\cdot \nabla \nabla^{s-1-k}\widetilde{\mathbf{B}}\Big) \cdot\nabla^{s-1 }\widetilde{\mathbf{u}}dx \\
			&+(s-1)\int_{\mathbb{T}^3} \Big(\nabla^{2}(\frac{\overline{\mathbf{B}}}{1+\overline{a}})\nabla \nabla^{s-3}\widetilde{\mathbf{B}}-\nabla^{2}(\frac{\overline{\mathbf{B}}}{1+\overline{a}})\cdot\nabla \nabla^{s-3}\widetilde{\mathbf{B}}\Big)\cdot\nabla^{s-1 }\widetilde{\mathbf{u}}dx \\
			&+(s-1)\int_{\mathbb{T}^3} \Big(\nabla(\frac{\overline{\mathbf{B}}}{1+\overline{a}})\nabla \nabla^{s-3}\widetilde{\mathbf{B}}-\nabla(\frac{\overline{\mathbf{B}}}{1+\overline{a}})\cdot\nabla \nabla^{s-3}\widetilde{\mathbf{B}}\Big)\cdot\nabla^{s }\widetilde{\mathbf{u}}dx .
		\end{align*}
		
		Except for the first term on the right side, all remaining terms always can be bounded by $\|(a,\mathbf{B}) \|_{H^{s-1}}\|\widetilde{\mathbf{B}} \|_{H^{s-2}}\|\nabla \mathbf{u} \|_{H^{s-1}}$. For the remainder terms in \eqref{est tFu2}, we similarly have
		\begin{align*}
			-\int_{\mathbb{T}^3}&\nabla^{s-2}\Big((\frac{\overline{\widetilde{a}^2}}{(1+\overline{a})^3}+\overline{\mathcal{R}_{\frac{1}{1+a}}(\overline{a},\widetilde{a})\widetilde{a}^3} )(\overline{\mathbf{B}}\cdot \nabla \widetilde{ \mathbf{B}}-\overline{\mathbf{B}}  \nabla \widetilde{ \mathbf{B}})\Big) \cdot\nabla^{s }\widetilde{\mathbf{u}}dx\\
			&-\int_{\mathbb{T}^3}\nabla^{s-2}\Big(\overline{\frac{1}{1+a}}(\widetilde{\mathbf{B}}\cdot \nabla \mathbf{B}-\widetilde{\mathbf{B}}  \nabla  \mathbf{B})+\widetilde{\frac{1}{1+a}}(\mathbf{B}\cdot \nabla \mathbf{B}-\mathbf{B}  \nabla \mathbf{B})\Big) \cdot\nabla^{s}\widetilde{\mathbf{u}}dx\\
			\lesssim & \|(a,\mathbf{B}) \|_{H^{s-1}}\|(\widetilde{a},\widetilde{\mathbf{B}}) \|_{H^{s-2}}\|\nabla \mathbf{u} \|_{H^{s-1}}.
		\end{align*}
		
		Combining the bad terms \eqref{equ bad b2} in the estimate of $\widetilde{\mathbf{B}}$ and above estimates, it follows that
		\begin{align*}
			\int_{\mathbb{T}^3}&\nabla^{s-1}\widetilde{F_{\mathbf{u}}}^{(2)}\cdot\nabla^{s-1}\widetilde{\mathbf{u}}dx-\int_{\mathbb{T}^3}\frac{1}{1+\overline{a}}(\overline{\mathbf{B}}\nabla^{s-1}\operatorname{div}\widetilde{\mathbf{u}} -\overline{\mathbf{B}}\cdot \nabla \nabla^{s-1}\widetilde{\mathbf{u}})\cdot\nabla^{s-1}\widetilde{\mathbf{B}} dx\\
			\lesssim & \|(a,\mathbf{B}) \|_{H^{s-1}}\|(\widetilde{a},\widetilde{\mathbf{B}}) \|_{H^{s-2}}\|\nabla \mathbf{u} \|_{H^{s-1}}.
		\end{align*}
		
		Integrating by parts and applying H\"older's inequality to the viscous remainder $\widetilde{F_{\mathbf{u}}}^{(3)}$ gives
		\begin{align*}
			\int_{\mathbb{T}^3}\nabla^{s-1}\widetilde{F_{\mathbf{u}}}^{(3)}\cdot\nabla^{s-1}\widetilde{\mathbf{u}}dx=&\int_{\mathbb{T}^3}\nabla^{s-2}\Big(\mu\widetilde{ I(a)\Delta \mathbf{u}}+(\lambda +\mu)\widetilde{I(a)\nabla \operatorname{div}\mathbf{u}}\Big)\cdot\nabla^{s}\widetilde{\mathbf{u}}dx\\
			\lesssim & \|a \|_{H^{s-1}}\|\nabla \mathbf{u} \|^2_{H^{s-1}}.
		\end{align*}
		Collecting all the above estimates and integrating over time, we get
		\begin{align*}
			(\mathcal{T}+t)^{1-\sigma}&\|\widetilde{\mathbf{u}} \|_{\dot{H}^{s-1}}^2+\int_{0}^{t}(\mathcal{T}+\tau)^{1-\sigma} \|\nabla \widetilde{\mathbf{u}} \|^2_{\dot{H}^{s-1}}d\tau\\
			&-\int_0^t\int_{\mathbb{T}^3}(\mathcal{T}+\tau)^{1-\sigma}h(\overline{a}) \nabla^{s-1}\operatorname{div}\widetilde{\mathbf{u}}\cdot \nabla^{s-1} \widetilde{a}dxd\tau\\
			&+\int_0^t\int_{\mathbb{T}^3}(\mathcal{T}+\tau)^{1-\sigma}\frac{1}{1+\overline{a}}\nabla^{s-1}(\partial_{x_3}\mathbf{u}-\mathbf{e}_3\operatorname{div} \widetilde{\mathbf{u}})\cdot\nabla^{s-1}\widetilde{\mathbf{B}}dxd\tau\\
			&+\int_0^t\int_{\mathbb{T}^3}(\mathcal{T}+\tau)^{1-\sigma}\frac{1}{1+\overline{a}}(\overline{\mathbf{B}}  \nabla \nabla^{s-1}\widetilde{ \mathbf{B}}- \overline{\mathbf{B}} \cdot\nabla\nabla^{s-1} \widetilde{ \mathbf{B}}) \cdot\nabla^{s-1 }\widetilde{\mathbf{u}}dxd\tau\\
			\lesssim&\mathcal{T}^{1-\sigma}\|\mathbf{u}_0\|_{H^{s-1}}^2+\sup_{\tau\in[0,t]}\|(a,\mathbf{u},\mathbf{B}) \|_{H^{s-1}}\int_{0}^{t}(\mathcal{T}+\tau)^{1-\sigma}(\|(\widetilde{a},\widetilde{\mathbf{B}}) \|_{H^{s-2}}^2+\|\nabla \mathbf{u} \|^2_{H^{s-1}})d\tau.
		\end{align*}
		
		Summing this inequality with \eqref{est ta hs-1} and \eqref{est tb hs-1},  it follows that
		\begin{align*}
			\widetilde{E}_1(t)=&\sup_{\tau\in[0,t]}(\mathcal{T}+\tau)^{1-\sigma}\|(\widetilde{a},\widetilde{\mathbf{B}},\widetilde{\mathbf{u}}) \|_{H^{s-1}}^2+\int_{0}^{t}(\mathcal{T}+\tau)^{1-\sigma} \|\nabla \widetilde{\mathbf{u}} \|^2_{H^{s-1}}d\tau\\
			\lesssim& \mathcal{T}^{1-\sigma}\|(a_0,\mathbf{u}_0,\mathbf{B}_0)\|_{H^{s-1}}^2+  E_0^{\frac12}(t)N_1^{\frac12}(t)+(E_0(t)+E_1(t)+E_2(t)+E_3(t)+N_1(t))^{\frac32}.
		\end{align*}
	\end{proof}
	\subsection{Estimate of $N_1(t)$}
	This subsection is devoted to extracting the hidden damped wave structure that replaces the full magnetic control supplied by a Diophantine background field. We estimate the oscillatory effective variables $(\widetilde d,\widetilde{\mathbf{G}})$ and then recover the directional dissipation of $\partial_{x_3}\mathbf{B}$ from the projected momentum equation. The argument is carried out in $H^{s-2}$ rather than $H^{s-1}$ because the available control of $\partial_{x_3}\mathbf{B}$ loses one derivative; the $x_3$-Poincar\'e inequality then converts this estimate into the required bound for $\widetilde{\mathbf{B}}$.
	\begin{lem}\label{est N1}
		Under the assumption of Theorem \ref{thm1} and assumption \eqref{ass}, we have
		\begin{align*}
			N_1(t)\lesssim \mathcal{T}^{1-\sigma}\| (a_0,\mathbf{u}_0,\mathbf{B}_0)\|_{H^{s}}^2+ E_{1}(t)+E_{total}^{\frac32}(t).
		\end{align*}
	\end{lem}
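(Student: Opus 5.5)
I would prove Lemma \ref{est N1} in two parts, both at the level $H^{s-2}$ with weight $(\mathcal{T}+\tau)^{1-\sigma}$.

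\textbf{First part: the pair $(\widetilde d,\widetilde{\mathbf G})$.} Take the oscillatory part of the nonlinear version of \eqref{eq dG}. From \eqref{equ pcmhd},
\[
\partial_t\widetilde d+\tfrac{2}{\nu}\widetilde d+2\operatorname{div}\widetilde{\mathbf G}=\partial_{x_3}u_3+\widetilde{F_d},\qquad
\partial_t\widetilde{\mathbf G}-\nu\Delta\widetilde{\mathbf G}=\tfrac{2}{\nu}\mathbb Q\widetilde{\mathbf u}-\tfrac1\nu\Delta^{-1}\nabla\partial_{x_3}u_3+\mathbb Q\widetilde{F_{\mathbf u}}-\tfrac1\nu\Delta^{-1}\nabla\widetilde{F_d}.
\]
Here $F_d$ collects the quadratic terms of the $a$ and $B_3$ equations. $F_{\mathbf u}$ contains $I(a)$ times the linear terms together with $\mathbf u\cdot\nabla\mathbf u$, the Lorentz terms and the pressure nonlinearity.

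Take the $\dot H^{k}$ inner product, $0\le k\le s-2$, of the first equation with $\widetilde d$ and of the second with $\widetilde{\mathbf G}$. This gives damping $\frac2\nu\|\widetilde d\|^2_{H^{s-2}}$ and dissipation $\nu\|\nabla\widetilde{\mathbf G}\|^2_{H^{s-2}}$. The linear forcings are handled as follows:
\begin{itemize}
\item $2\operatorname{div}\widetilde{\mathbf G}$ and $\partial_{x_3}u_3$ are bounded by $\epsilon\|\widetilde d\|^2+C\|(\nabla\widetilde{\mathbf G},\nabla\mathbf u)\|^2_{H^{s-2}}$.
\item The terms $\mathbb Q\widetilde{\mathbf u}$ are bounded using $\|\nabla\widetilde{\mathbf u}\|$ and the $x_3$-Poincar\'e inequality.
\end{itemize}
All $\mathbf u$-dissipation terms with weight $(\mathcal{T}+\tau)^{1-\sigma}$ are controlled by $E_1(t)$; this is why $E_1$ appears linearly on the right. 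The time-weight derivative $(1-\sigma)(\mathcal T+\tau)^{-\sigma}\|(\widetilde d,\widetilde{\mathbf G})\|^2$ is absorbed in two ways: the $\widetilde d$ part by the damping once $\mathcal T$ is large, the $\widetilde{\mathbf G}$ part by Poincar\'e, since $\widetilde{\mathbf G}$ has zero mean.

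The nonlinear terms are estimated as in Lemmas \ref{est bE1} and \ref{est tE1}. Because $k\le s-2$, the transport term $\mathbf u\cdot\nabla d$ costs one derivative. I would bound it by $\|\mathbf u\|_{H^{3}}\|(a,\mathbf B)\|_{H^{s-1}}\|\widetilde d\|_{H^{s-2}}$, pairing one factor in $E_0$ or $E_1$ with the decay of $\mathbf u$ from $E_2$.

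\textbf{Second part: damping of $\partial_{x_3}\mathbf B$ via a cross term.} Use \eqref{equ Pud3B}: differentiate in time the quantity
\[
-(\mathcal T+\tau)^{1-\sigma}\int\nabla^{k}\mathbb P\widetilde{\mathbf u}\cdot\nabla^{k}\partial_{x_3}\mathbf B\,dx,\qquad k\le s-2.
\]
Substituting the $\mathbb P\mathbf u$ equation yields $-\|\partial_{x_3}\mathbf B\|^2_{\dot H^k}$. The remaining terms are
\begin{itemize}
\item $\mu\int\nabla^k\Delta\mathbb P\mathbf u\cdot\nabla^k\partial_{x_3}\mathbf B$, bounded by $\frac14\|\partial_{x_3}\mathbf B\|^2+C\|\nabla\mathbf u\|^2_{H^{k+1}}$;
\item the time derivative of $\partial_{x_3}\mathbf B$ paired with $\mathbb P\mathbf u$, which after moving $\partial_{x_3}$ is bounded by $\|\nabla\mathbf u\|^2_{H^{k}}$.
\end{itemize}
Both are controlled by $\|\nabla\mathbf u\|_{H^{s-1}}$ and hence by $E_1$. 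The boundary term is bounded by $(\mathcal T+t)^{1-\sigma}\|(\mathbf u,\mathbf B)\|^2_{H^{s-1}}\lesssim E_1$. The nonlinear terms in the time derivative of $\partial_{x_3}\mathbf B$ must be paired against $\nabla^{k}\mathbb P\mathbf u$ after integrating $\partial_{x_3}$ by parts onto $\mathbf u$. This avoids an $(s-1)$-st derivative on $\mathbf B$ beyond what $E_0$, weighted as $(\mathcal T+\tau)^{-1-\sigma}\|\mathbf B\|^2_{H^s}$, can pay for. Adding this cross-term estimate, with a small constant, to the first part gives the full integral $\int(\mathcal T+\tau)^{1-\sigma}\|(\nabla\widetilde{\mathbf G},\widetilde d,\partial_{x_3}\mathbf B)\|^2_{H^{s-2}}$.

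\textbf{Main obstacle.} The main difficulty is the nonlinear terms containing $\overline a$ and $\overline{\mathbf B}$, which do not decay, multiplied by top derivatives. Examples are $\overline{\mathbf B}\cdot\nabla\mathbf B$ inside $F_{\mathbf u}$ paired with $\nabla^{s-2}\widetilde{\mathbf G}$ or $\nabla^{s-2}\mathbb P\mathbf u$, and $\overline{a}\,\partial_{x_3}\mathbf B$. For these I would use the symmetry splitting $\mathbf B\cdot\nabla\mathbf B=\widetilde{\mathbf B}_h\cdot\nabla_h\mathbf B+B_3\partial_{x_3}\mathbf B$. I then bound the product by $\|(a,\mathbf B)\|_{H^{s-1}}\|\partial_{x_3}\mathbf B\|_{H^{s-2}}$, or by $\|\mathbf B\|_{H^{s}}$ against $E_0$ with the weight split $(\mathcal T+\tau)^{-\frac{1+\sigma}{2}}\cdot(\mathcal T+\tau)^{\frac{3-\sigma}{2}}$ and the decay from $E_3$, $N_3$. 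All such terms then fall into $E_{total}^{3/2}$. The initial contribution is $\mathcal T^{1-\sigma}\|(a_0,\mathbf u_0,\mathbf B_0)\|^2_{H^s}$.
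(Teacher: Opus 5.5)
\textbf{There is a genuine gap in the second part.}

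Your architecture matches the paper's: a weighted $H^{s-2}$ estimate for the damped pair $(\widetilde d,\widetilde{\mathbf G})$, then a cross term between the solenoidal velocity and $\partial_{x_3}\mathbf B$. The first part is essentially the paper's argument. The nonlinearity in the $\widetilde{\mathbf G}$ equation only has to be controlled in $H^{s-3}$ once a derivative is moved onto $\nabla\widetilde{\mathbf G}$.

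You differentiate $\int\nabla^{k}\mathbb P\widetilde{\mathbf u}\cdot\nabla^{k}\partial_{x_3}\mathbf B\,dx$ and substitute the $\mathbb P\mathbf u$-equation coming from the momentum equation divided by $\rho$. Its remainder therefore contains $\mathbb P\big(I(a)\nabla(B_3+\frac12|\mathbf B|^2)\big)$, because $\mathbb P$ annihilates $\nabla(B_3+\frac12|\mathbf B|^2)$ but not $\rho^{-1}\nabla(B_3+\frac12|\mathbf B|^2)$. For $k=s-2$ this gives the top-order piece $\overline{I(a)}\,(1+\overline{B_3})\,\nabla^{s-1}\widetilde B_3$. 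It is paired with $\nabla^{s-2}\partial_{x_3}\mathbf B$, onto which no derivative can be shifted, so you need a uniform-in-$t$ bound on $\int_0^t(\mathcal T+\tau)^{1-\sigma}\|\widetilde B_3\|_{\dot H^{s-1}}\|\partial_{x_3}\mathbf B\|_{\dot H^{s-2}}\,d\tau$, with a prefactor that does not decay. Nothing in $E_{total}$ gives this:
\begin{itemize}
\item $N_1$ controls $\widetilde{\mathbf B}$ in time-integrated form only in $H^{s-2}$, via the $x_3$-Poincar\'e inequality.
\item $\widetilde E_1$ bounds $\widetilde{\mathbf B}$ in $H^{s-1}$ only pointwise in time. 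This leaves $\int_0^t(\mathcal T+\tau)^{\frac{1-\sigma}{2}}\|\partial_{x_3}\mathbf B\|_{H^{s-2}}\,d\tau$, which is not uniformly bounded.
\item Interpolating between $E_0$ and $N_1$ leaves a growing factor $(\mathcal T+\tau)^{1/2}$.
\item Your weight split $(\mathcal T+\tau)^{-\frac{1+\sigma}{2}}(\mathcal T+\tau)^{\frac{3-\sigma}{2}}$ would require $(\mathcal T+\tau)^{3-\sigma}\|\partial_{x_3}\mathbf B\|^2_{H^{s-2}}$ to be integrable, whereas $N_3$ only reaches $H^{s-4}$.
\end{itemize}
The symmetry does not help either, since $B_3$ is even and $\overline{B_3}\neq0$. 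By contrast, the two terms you flag as the main obstacle are harmless. Since $\overline{\mathbf B}_h=0$, $\overline{\mathbf B}\cdot\nabla\mathbf B=\overline{B_3}\,\partial_{x_3}\mathbf B$. The term $\overline a\,\partial_{x_3}\mathbf B$ just produces $\|a\|_{H^{s-2}}\|\partial_{x_3}\mathbf B\|^2_{H^{s-2}}$.

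The missing idea is to use that these forces are exact gradients. The paper applies $\mathbb P$ to the momentum equation in conservative form. Then $\nabla\mathcal D$ and $\nabla\operatorname{div}\mathbf u$ drop out exactly, and
\[
\partial_{x_3}\mathbf B=\mathbb P\big(\rho\partial_t\mathbf u+\rho\mathbf u\cdot\nabla\mathbf u-\mu\Delta\mathbf u-\mathbf B\cdot\nabla\mathbf B\big).
\]
The cross term becomes $\int\nabla^{s-2}\mathbb P(\rho\mathbf u)\cdot\nabla^{s-2}\partial_{x_3}\mathbf B\,dx$. The only price is the extra term $\mathbb P(\partial_t\rho\,\mathbf u)$, handled with the continuity equation. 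The only magnetic nonlinearity left is $\mathbb P(\mathbf B\cdot\nabla\mathbf B)$, which is treated by the symmetry exactly as you suggest.

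If you want to keep the divided form, you must add the corresponding step yourself. With $\Pi=B_3+\frac12|\mathbf B|^2$, rewrite $\mathbb P(\overline{I(a)}\nabla\widetilde\Pi)=-\mathbb P(\widetilde\Pi\,\nabla\overline{I(a)})$. Then bound $\widetilde{I(a)}\nabla\widetilde\Pi$ using the $(\mathcal T+\tau)^{-\frac{3-\sigma}{2}}$ decay of $\|\widetilde a\|_{L^\infty}$ from $\widetilde E_3$. As written, your plan does not close the $\partial_{x_3}\mathbf B$ estimate.
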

	\begin{proof}
		We begin by establishing the estimates for the auxiliary quantities $(\widetilde{d},\widetilde{\mathbf{G}})$, defined by
		\begin{align*}
			d=a+B_3,\qquad \mathbf{G}=\mathbb{Q}\mathbf{u}-\frac{1}{\nu} \Delta^{-1} \nabla d.
		\end{align*} 
		Taking the oscillatory projection of their corresponding equations reveals that $(\widetilde{d},\widetilde{\mathbf{G}})$ satisfy the following system
		\begin{equation}\label{equ dG0}
			\left\{ \begin{aligned}
				&\partial_t\widetilde{d}=\partial_{x_3}u_3-2\operatorname{div}\widetilde{\mathbf{u}}+\widetilde{F_{d}},\\
				&\partial_t\widetilde{\mathbf{G}}-\nu\Delta \widetilde{\mathbf{G}}=\frac{2}{\nu}\mathbb{Q}\widetilde{\mathbf{u}}-\frac{1}{\nu}\Delta^{-1}\nabla \partial_{x_3}u_3+\mathbb{Q}\widetilde{F_{\mathbf{G}}}.
			\end{aligned}\right.
		\end{equation}
		Here, the nonlinear terms are given by
		\begin{align*}
			F_{d}=&F_a+F_{\mathbf{B}}\cdot \mathbf{e}_3=-\mathbf{u}\cdot \nabla a-a\operatorname{div}\mathbf{u}-\mathbf{u}\cdot \nabla B_3+\mathbf{B}\cdot \nabla u_3-B_3\operatorname{div}\mathbf{u},\\
			F_{\mathbf{G}}=&-\mathbf{u}\cdot \nabla \mathbf{u}+\mathbf{B}\cdot \nabla \mathbf{B}-\mathbf{B}\nabla \mathbf{B} +k(a)\nabla a-I(a)( \mu\Delta \mathbf{u}+(\lambda +\mu)  \nabla\operatorname{div}\mathbf{u})\\
			&-I(a)(\partial_{x_3} \mathbf{B}+\mathbf{B}\cdot \nabla \mathbf{B}-\nabla B_3-\mathbf{B}\nabla \mathbf{B})-\frac{1}{\nu}\Delta^{-1}\nabla  F_{d},
		\end{align*}
		with the coefficient $k(a)=1-h(a)=1-\frac{P'(1+a)}{1+a}$.
		
		By the definition of the projection $\mathbb{Q}$ and the quantity $\mathbf{G}$, the velocity divergence can be expressed as
		\begin{align}\label{equ divqu}
			\operatorname{div}\mathbf{u}=\operatorname{div}\mathbb{Q}\mathbf{u}=\operatorname{div}\mathbf{G}+\frac{1}{\nu}d.
		\end{align}
		Substituting \eqref{equ divqu} back into the system \eqref{equ dG0}, we reveal the underlying damped structure for $\widetilde{d}$:
		\begin{equation}\label{equ dG}
			\left\{\begin{aligned}
				&\partial_t\widetilde{d}+\frac{2}{\nu}\widetilde{d}=\partial_{x_3}u_3-2\operatorname{div}\widetilde{\mathbf{G}}+\widetilde{F_{d}},\\
				&\partial_t\widetilde{\mathbf{G}}-\nu\Delta \widetilde{\mathbf{G}}=\frac{2}{\nu}\mathbb{Q}\widetilde{\mathbf{u}}-\frac{1}{\nu}\Delta^{-1}\nabla \partial_{x_3}u_3+\mathbb{Q}\widetilde{F_{\mathbf{G}}}.
			\end{aligned}\right.
		\end{equation}
		
		To estimate $\widetilde{d}$, we take the time-weighted $\dot{H}^{s-2}$ inner product of the first equation in \eqref{equ dG} with $\widetilde{d}$, which yields
		\begin{align*}
			\frac12&\frac{d}{d\tau}\Big((\mathcal{T}+\tau)^{1-\sigma}\|\widetilde{d} \|^2_{\dot{H}^{s-2}}\Big)-\frac{1-\sigma}{2}(\mathcal{T}+\tau)^{-\sigma}\| \widetilde{d}\|^2_{\dot{H}^{s-2}}+\frac{2}{\nu}(\mathcal{T}+\tau)^{1-\sigma}\| \widetilde{d}\|^2_{\dot{H}^{s-2}}\\
			&=\int_{\mathbb{T}^3}(\mathcal{T}+\tau)^{1-\sigma}\nabla^{s-2}(\partial_{x_3}u_3-2\operatorname{div}\widetilde{\mathbf{G}})\nabla^{s-2} \widetilde{d}  dx+\int_{\mathbb{T}^3}(\mathcal{T}+\tau)^{1-\sigma}\nabla^{s-2}\widetilde{F_d}\nabla^{s-2} \widetilde{d}  dx.
		\end{align*}
		
		For the linear coupling term on the right-hand side, an application of the Cauchy-Schwarz and Young inequalities gives
		\begin{align}\label{est hs-1 dlin}
			\int_{\mathbb{T}^3}&(\mathcal{T}+\tau)^{1-\sigma}\nabla^{s-2}(\partial_{x_3}u_3-2\operatorname{div}\widetilde{\mathbf{G}})\nabla^{s-2} \widetilde{d}  dx\\
			&\leq \frac{1}{2\nu}(\mathcal{T}+\tau)^{1-\sigma}\|\widetilde{d} \|^2_{H^{s-2}}+C(\mathcal{T}+\tau)^{1-\sigma}(\|\nabla \widetilde{\mathbf{G}} \|^2_{H^{s-2}}+\|\nabla \widetilde{\mathbf{u}} \|^2_{H^{s-2}}).\nonumber
		\end{align}
		
		Similarly, the nonlinear forcing term is bounded by
		\begin{align*}
			\int_{\mathbb{T}^3}(\mathcal{T}+\tau)^{1-\sigma}\nabla^{s-2}\widetilde{F_d}\nabla^{s-2} \widetilde{d}  dx\leq  \frac{1}{2\nu}(\mathcal{T}+\tau)^{1-\sigma}\|\widetilde{d} \|^2_{H^{s-2}}+C(\mathcal{T}+\tau)^{1-\sigma}\| \widetilde{F_{d}}\|^2_{H^{s-2}},
		\end{align*}
		where the $H^{s-2}$ norm of $\widetilde{F_d}$ satisfies the standard product estimates:
		\begin{align}\label{est ftd hs-2}
			\|\widetilde{F_d} \|_{H^{s-2}}\lesssim & \|\widetilde{\mathbf{u}\cdot \nabla a} \|_{H^{s-2}}+\|\widetilde{a\operatorname{div}\mathbf{u}} \|_{H^{s-2}}+\|\widetilde{\mathbf{u}\cdot \nabla B_3} \|_{H^{s-2}}\\
			&+\|\widetilde{\mathbf{B}\cdot \nabla u_3} \|_{H^{s-2}}+\|\widetilde{B_3\operatorname{div}\mathbf{u}} \|_{H^{s-2}}\nonumber\\
			\lesssim & \|a \|_{H^{s-1}}\| \mathbf{u} \|_{H^{s-2}}+\| a\|_{H^{s-2}}\| \nabla \mathbf{u}\|_{H^{s-2}}\nonumber\\
			&+\| \mathbf{B}\|_{H^{s-1}}\|\mathbf{u} \|_{H^{s-2}}+\|\mathbf{B} \|_{H^{s-2}}\| \nabla \mathbf{u}\|_{H^{s-2}}\nonumber\\
			\lesssim & \|(a,\mathbf{B}) \|_{H^{s-1}}\| \nabla\mathbf{u} \|_{H^{s-2}}.\nonumber
		\end{align}
		Integrating over time, we obtain
		\begin{align}\label{est hs-1 dnonl}
			\int_{0}^{t}&\int_{\mathbb{T}^3}(\mathcal{T}+\tau)^{1-\sigma}\nabla^{s-2}\widetilde{F_d}\nabla^{s-2} \widetilde{d}  dxd\tau  \\
			&\leq\frac{1}{2\nu}\int_{0}^{t}(\mathcal{T}+\tau)^{1-\sigma}\|\widetilde{d} \|^2_{H^{s-2}}d\tau +C\sup_{\tau\in[0,t]}  \|(a,\mathbf{B}) \|_{H^{s-1}}^2\int_{0}^{t}(\mathcal{T}+\tau)^{1-\sigma} \|\nabla \mathbf{u} \|_{H^{s-2}}^2d\tau.\nonumber
		\end{align}
		
		Combining \eqref{est hs-1 dlin}, \eqref{est hs-1 dnonl} and recalling the definitions of $E_0,E_i$, we deduce that
		\begin{align}\label{est td hs-2}
			(\mathcal{T}+t)^{1-\sigma}&\|\widetilde{d} \|^2_{H^{s-2}}+\frac{1}{\nu}\int_{0}^t(\mathcal{T}+\tau)^{1-\sigma}\| \widetilde{d}\|^2_{H^{s-2}}d\tau
			\\
			\lesssim& \mathcal{T}^{1-\sigma}\| (a_0,\mathbf{B}_0)\|_{H^{s-2}}^2+\int_{0}^t (\mathcal{T}+\tau)^{1-\sigma}(\|\nabla \widetilde{\mathbf{G}} \|^2_{H^{s-2}}+\|\nabla\widetilde{\mathbf{u}} \|^2_{H^{s-2}})d\tau+E_1^2(t).\nonumber
		\end{align}
		
		To derive the energy bounds for the quantity $\widetilde{\mathbf{G}}$, we take the time-weighted $\dot{H}^{s-2}$ inner product of the second equation in \eqref{equ dG} against $\widetilde{\mathbf{G}}$. This yields
		\begin{align}\label{est dtG hs-2}
			\frac12\frac{d}{d\tau}&\Big((\mathcal{T}+\tau)^{1-\sigma}\|\widetilde{\mathbf{G}} \|^2_{\dot{H}^{s-2}}\Big)-\frac{1-\sigma}{2}(\mathcal{T}+\tau)^{-\sigma}\| \widetilde{\mathbf{G}}\|^2_{\dot{H}^{s-2}}+\nu(\mathcal{T}+\tau)^{1-\sigma}\| \nabla\widetilde{\mathbf{G}}\|^2_{\dot{H}^{s-2}}\\
			=& \int_{\mathbb{T}^3}(\mathcal{T}+\tau)^{1-\sigma}\nabla^{s-2}(\frac{2}{\nu}\mathbb{Q}\widetilde{\mathbf{u}}-\frac{1}{\nu}\Delta^{-1}\nabla\partial_{x_3}u_3+\mathbb{Q}\widetilde{F_\mathbf{G}})\cdot\nabla^{s-2} \widetilde{\mathbf{G}} dx\nonumber\\
			\leq & \frac{\nu}{2}(\mathcal{T}+\tau)^{1-\sigma}\|\nabla \widetilde{\mathbf{G}} \|_{H^{s-2}}^2+C(\mathcal{T}+\tau)^{1-\sigma}(\|\widetilde{\mathbf{u}} \|_{H^{s-2}}^2+\|\widetilde{F_\mathbf{G}} \|_{H^{s-3}}^2).\nonumber
		\end{align}
		
		Invoking the estimate \eqref{est ftd hs-2} and standard product estimates, the nonlinear term $\widetilde{F_{\mathbf{G}}}$ can be bounded as follows:
		\begin{align*}
			\|\widetilde{F_{\mathbf{G}}} \|_{H^{s-3}}\lesssim & \|\widetilde{\mathbf{u}\cdot \nabla \mathbf{u}}\|_{H^{s-3}}+\|\widetilde{\mathbf{B}\cdot \nabla \mathbf{B}} \|_{H^{s-3}}+\|\widetilde{ \mathbf{B}\nabla \mathbf{B}} \|_{H^{s-3}}+\|\widetilde{k(a)\nabla a} \|_{H^{s-3}}+\|\widetilde{  I(a)\Delta \mathbf{u}}\|_{H^{s-3}}\\
			&+\|\widetilde{ I(a)\nabla\operatorname{div}\mathbf{u}} \|_{H^{s-3}}+\| \widetilde{I(a)\partial_{x_3} \mathbf{B} }\|_{H^{s-3}}+\| \widetilde{I(a)\nabla B_3 }\|_{H^{s-3}}\\
			&+\|\widetilde{I(a)\mathbf{B}\cdot \nabla \mathbf{B} } \|_{H^{s-3}}+\|\widetilde{I(a)\mathbf{B}\nabla \mathbf{B}}\|_{H^{s-3}}+\|\frac{1}{\nu}\Delta^{-1}\nabla  \widetilde{F_{d} } \|_{H^{s-3}}\\
			\lesssim & \| \widetilde{\mathbf{u}}\|_{H^{s-2}}\|\mathbf{u} \|_{H^{s-2}}+\|\widetilde{\mathbf{B}} \|_{H^{s-2}}\| \mathbf{B}\|_{H^{s-2}}\\
			&+\|\widetilde{a} \|_{H^{s-2}}\|a \|_{H^{s-2}}+\| \widetilde{a}\|_{H^{s-3}}\|\nabla\mathbf{u} \|_{H^{s-2}}+\| a\|_{H^{s-3}}\|\nabla\widetilde{\mathbf{u}}  \|_{H^{s-2}}\\
			&+\|a \|_{H^{s-3}}\|\partial_{x_3}\mathbf{B} \|_{H^{s-3}}
			+\|\widetilde{a} \|_{H^{s-3}}\|B_3 \|_{H^{s-2}}+\| \overline{a}\|_{H^{s-3}}\|\widetilde{\mathbf{B}}_3 \|_{H^{s-2}}\\
			&+\|\widetilde{a} \|_{H^{s-3}}\| \mathbf{B}\|^2_{H^{s-2}}+\|a \|_{H^{s-3}}\|\mathbf{B} \|_{H^{s-2}}\|\widetilde{\mathbf{B}} \|_{H^{s-2}}+\|(a,\mathbf{B}) \|_{H^{s-2}}\| \nabla\mathbf{u} \|_{H^{s-3}}\\
			\lesssim & (\|(a,\mathbf{u},\mathbf{B}) \|_{H^{s-2}}+\|(\mathbf{u},\mathbf{B}) \|^2_{H^{s-2}})\|(\widetilde{a},\nabla\mathbf{u},\widetilde{\mathbf{B}}) \|_{H^{s-2}}.
		\end{align*}
		Inserting this bound back into \eqref{est dtG hs-2} and integrating over time, we deduce
		\begin{align}\label{est tG hs-2}
			(\mathcal{T}+t)^{1-\sigma}&\|\widetilde{\mathbf{G}} \|^2_{H^{s-2}}+\int_{0}^t(\mathcal{T}+\tau)^{1-\sigma}\| \nabla\widetilde{\mathbf{G}}\|^2_{H^{s-2}}d\tau\\
			\lesssim & \mathcal{T}^{1-\sigma}\| (a_0,\mathbf{u}_0,\mathbf{B}_0)\|_{H^{s-2}}^2+E_1(t) +  (N_1(t)+E_1(t))^{2}.\nonumber
		\end{align}
		
		Multiplying \eqref{est td hs-2} with a suitable small constant and adding to \eqref{est tG hs-2}, we conclude that
		\begin{align}\label{est tdG hs-1}
			(\mathcal{T}+t)^{1-\sigma}&\|(\widetilde{d},\widetilde{\mathbf{G}}) \|^2_{H^{s-2}}+\int_{0}^t(\mathcal{T}+\tau)^{1-\sigma}\| (\widetilde{d},\nabla\widetilde{\mathbf{G}})\|^2_{H^{s-2}}d\tau
			\\
			\lesssim& \mathcal{T}^{1-\sigma}\| (a_0,\mathbf{u}_0,\mathbf{B}_0)\|_{H^{s}}^2+E_1(t)+   (N_1(t)+E_1(t))^{2}.\nonumber
		\end{align}
		
		Finally, we derive the estimate for $\partial_{x_3}\mathbf{B}$. Applying $\nabla^{s-2}$ to the equation for $\mathbb{P}\mathbf{u}$ and taking inner product with $\partial_{x_3}\nabla^{s-2}\mathbf{B}$,  we obtain
		\begin{align*}
			(\mathcal{T}+\tau)^{1-\sigma}&\|\partial_{x_3}\mathbf{B} \|^2_{\dot{H}^{s-2}}\\
			=& \int_{\mathbb{T}^3}(\mathcal{T}+\tau)^{1-\sigma}\nabla^{s-2}\mathbb{P}(\rho\partial_{\tau}\mathbf{u}+\rho \mathbf{u}\cdot \nabla \mathbf{u}-\mu \Delta \mathbf{u}-\mathbf{B}\cdot \nabla \mathbf{B} )\cdot\nabla^{s-2}\partial_{x_3}\mathbf{B}   dx\\
			&=\frac{d}{d\tau}\int_{\mathbb{T}^3}(\mathcal{T}+\tau)^{1-\sigma}\nabla^{s-2}\mathbb{P}(\rho\mathbf{u})\cdot\nabla^{s-2}\partial_{x_3}\mathbf{B}   dx+\sum_{i=1}^6\mathcal{N}_i,
		\end{align*}
		where
		\begin{align*}
			\mathcal{N}_1=&-(1-\sigma)\int_{\mathbb{T}^3}(\mathcal{T}+\tau)^{-\sigma}\nabla^{s-2}\mathbb{P}(\rho\mathbf{u})\cdot\nabla^{s-2}\partial_{x_3}\mathbf{B}   dx,\\
			\mathcal{N}_2=& -\int_{\mathbb{T}^3}(\mathcal{T}+\tau)^{1-\sigma}\nabla^{s-2}\mathbb{P}(\partial_t\rho\mathbf{u})\cdot\nabla^{s-2}\partial_{x_3}\mathbf{B}   dx, \\
			\mathcal{N}_3=&\int_{\mathbb{T}^3}(\mathcal{T}+\tau)^{1-\sigma}\nabla^{s-2}\mathbb{P}\partial_{x_3}(\rho\mathbf{u})\cdot\nabla^{s-2}\partial_t\mathbf{B}   dx, \\
			\mathcal{N}_4=&\int_{\mathbb{T}^3}(\mathcal{T}+\tau)^{1-\sigma}\nabla^{s-2}\mathbb{P}( \rho \mathbf{u}\cdot \nabla \mathbf{u})\cdot\nabla^{s-2}\partial_{x_3}\mathbf{B}   dx, \\
			\mathcal{N}_5=&-\int_{\mathbb{T}^3}(\mathcal{T}+\tau)^{1-\sigma}\nabla^{s-2}\mathbb{P} \mu\Delta \mathbf{u}\cdot\nabla^{s-2}\partial_{x_3}\mathbf{B}   dx, \\
			\mathcal{N}_6=&-\int_{\mathbb{T}^3}(\mathcal{T}+\tau)^{1-\sigma}\nabla^{s-2}\mathbb{P}( \mathbf{B}\cdot \nabla \mathbf{B} )\cdot\nabla^{s-2}\partial_{x_3}\mathbf{B}   dx.
		\end{align*}
		For the first term, we have
		\begin{align*}
			\mathcal{N}_1=&-(1-\sigma) \int_{\mathbb{T}^3}(\mathcal{T}+\tau)^{-\sigma}\nabla^{s-2}\mathbb{P}(\rho\mathbf{u})\cdot\nabla^{s-2}\partial_{x_3}\mathbf{B}   dx\\
			\lesssim & \mathcal{T}^{-1} (\mathcal{T}+\tau)^{1-\sigma}(1+\| a\|_{H^{s-2}})\|\nabla \mathbf{u} \|_{H^{s-3}}\|\partial_{x_3}\mathbf{B} \|_{H^{s-2}}.
		\end{align*}
		Then, via Young's inequality, the magnetic term $\|\partial_{x_3}\mathbf{B}\|_{H^{s-2}}$ can be absorbed into the left-hand side by choosing a suitably small coefficient, leaving the velocity term to be bounded by the energy functional $E_1(t)$.
		
		Next, using the equation for $a$, we can bound $\mathcal{N}_2$ as follows:
		\begin{align*}
			\mathcal{N}_2=&-\int_{\mathbb{T}^3}(\mathcal{T}+\tau)^{1-\sigma}\nabla^{s-2}\mathbb{P}(\partial_{\tau}\rho\mathbf{u})\cdot\nabla^{s-2}\partial_{x_3}\mathbf{B}   dx\\
			=& \int_{\mathbb{T}^3}(\mathcal{T}+\tau)^{1-\sigma}\nabla^{s-2}\mathbb{P}(( \operatorname{div} \mathbf{u}+\mathbf{u}\cdot \nabla a+a\operatorname{div}\mathbf{u}) \mathbf{u})\cdot\nabla^{s-2}\partial_{x_3}\mathbf{B}   dx\\
			\lesssim & (\mathcal{T}+\tau)^{1-\sigma}(\|\nabla \mathbf{u} \|_{H^{s-2}}+\|\mathbf{u} \|_{H^{s-2}}\| \nabla a\|_{H^{s-2}}+\| a\|_{H^{s-2}}\|\operatorname{div}\mathbf{u} \|_{H^{s-2}})\| \mathbf{u}\|_{H^{s-2}}\|\partial_{x_3}\mathbf{B} \|_{H^{s-2}}\\
			\lesssim & (\mathcal{T}+\tau)^{1-\sigma}(1+\|(a,\mathbf{u}) \|_{H^{s-1}})\|\nabla \mathbf{u} \|_{H^{s-2}}^2\|\partial_{x_3}\mathbf{B} \|_{H^{s-2}}.
		\end{align*}
		
		Turning to $\mathcal{N}_3$, integrating by parts with respect to $x_3$ and substituting the equation for $\partial_\tau \mathbf{B}$, we obtain
		\begin{align*}
			\mathcal{N}_3=&\int_{\mathbb{T}^3}(\mathcal{T}+\tau)^{1-\sigma}\nabla^{s-2}\mathbb{P}\partial_{x_3}(\rho\mathbf{u})\cdot\nabla^{s-2}\partial_t\mathbf{B}   dx\\
			=&\int_{\mathbb{T}^3} (\mathcal{T}+\tau)^{1-\sigma}\nabla^{s-2}\mathbb{P}\partial_{x_3}(\rho\mathbf{u})\cdot\nabla^{s-2} (\mathbf{B}\cdot \nabla \mathbf{u}-\mathbf{u}\cdot \nabla \mathbf{B}-\mathbf{B}\operatorname{div}\mathbf{u}-\mathbf{e}_3\operatorname{div}\mathbf{u}+\partial_{x_3}\mathbf{u} )   dx\\
			\lesssim & (\mathcal{T}+\tau)^{1-\sigma}(1+\|a \|_{H^{s-1}})\|\nabla \mathbf{u} \|_{H^{s-2}}(\|\mathbf{B} \|_{H^{s-2}}\| \nabla \mathbf{u}\|_{H^{s-2}}+\| \mathbf{u}\|_{H^{s-2}}\| \mathbf{B}\|_{H^{s-1}}\\
			&+\| \mathbf{B}\|_{H^{s-2}}\|\operatorname{div}\mathbf{u} \|_{H^{s-2}}+\|\operatorname{div}\mathbf{u} \|_{H^{s-2}}+\|\nabla \mathbf{u} \|_{H^{s-2}} )\\
			\lesssim &  (\mathcal{T}+\tau)^{1-\sigma}(1+\|(a,\mathbf{B}) \|_{H^{s-1}} )\|\nabla \mathbf{u} \|_{H^{s-2}}^2.
		\end{align*}
		
		The convection term $\mathcal{N}_4$ can be estimated directly:
		\begin{align*}
			\mathcal{N}_4=&\int_{\mathbb{T}^3}(\mathcal{T}+\tau)^{1-\sigma}\nabla^{s-2}\mathbb{P}( \rho \mathbf{u}\cdot \nabla \mathbf{u})\cdot\nabla^{s-2}\partial_{x_3}\mathbf{B}   dx\\
			\lesssim  &(\mathcal{T}+\tau)^{1-\sigma}(1+\|a \|_{H^{s-2}})\| \mathbf{u}\|_{H^{s-2}}\|\nabla \mathbf{u} \|_{H^{s-2}}\|\partial_{x_3}\mathbf{B} \|_{H^{s-2}}.
		\end{align*}
		
		As for the viscous term, H\"older inequality yields
		\begin{align*}
			\mathcal{N}_5=&-\int_{\mathbb{T}^3}(\mathcal{T}+\tau)^{1-\sigma}\nabla^{s-2}\mathbb{P} \mu\Delta \mathbf{u}\cdot\nabla^{s-2}\partial_{x_3}\mathbf{B}   dx\\
			\lesssim&  (\mathcal{T}+\tau)^{1-\sigma}\|\nabla \mathbf{u} \|_{H^{s-1}}\|\partial_{x_3}\mathbf{B} \|_{H^{s-2}}.
		\end{align*}
		This term can be handled in the same manner as $\mathcal{N}_1$.
		
		For the last term, decomposing the magnetic field into horizontal and vertical components, we deduce the bound for the Lorentz force term:
		\begin{align*}
			\mathcal{N}_6=&-\int_{\mathbb{T}^3}(\mathcal{T}+\tau)^{1-\sigma}\nabla^{s-2}\mathbb{P}( \mathbf{B}\cdot \nabla \mathbf{B} )\cdot\nabla^{s-2}\partial_{x_3}\mathbf{B}   dx \\
			=&-\int_{\mathbb{T}^3}(\mathcal{T}+\tau)^{1-\sigma}\nabla^{s-2}\mathbb{P}( \mathbf{B}_h\cdot \nabla_h \mathbf{B}+B_3\partial_{x_3}\mathbf{B} )\cdot\nabla^{s-2}\partial_{x_3}\mathbf{B}   dx \\
			\lesssim& (\mathcal{T}+\tau)^{1-\sigma}(\|\mathbf{B}_h \|_{H^{s-2}}\| \mathbf{B}\|_{H^{s-1}}+\|B_3 \|_{H^{s-2}}\| \partial_{x_3}\mathbf{B}\|_{H^{s-2}})\|\partial_{x_3}\mathbf{B} \|_{H^{s-2}}\\
			\lesssim & (\mathcal{T}+\tau)^{1-\sigma}\| \mathbf{B}\|_{H^{s-1}}\|\partial_{x_3}\mathbf{B} \|_{H^{s-2}}^2.
		\end{align*}
		Under the bootstrap smallness assumption, the quadratic remainders in \eqref{est tdG hs-1} are bounded by the corresponding $3/2$-powers. Invoking the above estimates and integrating over time, we obtain
		\begin{align*}
			\int_{0}^t(\mathcal{T}+\tau)^{1-\sigma}&\|\nabla^{s-2}\partial_{x_3}\mathbf{B} \|^2_{L^{2}}d\tau	\lesssim \mathcal{T}^{1-\sigma}\| (a_0,\mathbf{u}_0,\mathbf{B}_0)\|_{H^{s}}^2+E_1(t)+  (E_1(t)+N_1(t) )^{\frac32},
		\end{align*}
		where we use the fact 
		\begin{align*}
			\int_{0}^t\frac{d}{d\tau}&\int_{\mathbb{T}^3}(\mathcal{T}+\tau)^{1-\sigma}\nabla^{s-2}\mathbb{P}(\rho\mathbf{u})\cdot\nabla^{s-2}\partial_{x_3}\mathbf{B}   dxd\tau\\
			\leq &2\mathcal{T}^{1-\sigma}\| (\mathbf{u}_0,\mathbf{B}_0)\|_{H^{s-1}}^2+\frac{1}{2}\int_{0}^t(\mathcal{T}+\tau)^{1-\sigma}\|\nabla^{s-2}\partial_{x_3}\mathbf{B} \|^2_{L^{2}}d\tau+E_1(t).
		\end{align*}
		Combining with the  estimate \eqref{est tdG hs-1}, it follows that
		\begin{align*}
			N_1(t)=&\sup_{\tau\in[0,t]}(\mathcal{T}+\tau)^{1-\sigma} \| (\widetilde{d},\widetilde{\mathbf{G}})\|^2_{H^{s-2}}+\int_{0}^{t}(\mathcal{T}+\tau)^{1-\sigma}\| (\nabla \widetilde{\mathbf{G}},\widetilde{d},\partial_{x_3}\mathbf{B})(\tau,\cdot)\|^2_{H^{s-2}}d\tau\\
			\lesssim&  \mathcal{T}^{1-\sigma}\| (a_0,\mathbf{u}_0,\mathbf{B}_0)\|_{H^{s}}^2+ E_1(t)+(E_1(t)+N_1 (t))^{\frac32}.
		\end{align*}
	\end{proof}
	\subsection{Estimate of $E_3(t)$ and $N_3(t)$}
	This subsection establishes the analogous average, oscillatory, and hidden-damping estimates at a lower regularity level, but equipped with the stronger time decay weight $(\mathcal T+t)^{3-\sigma}$. Consequently, we skip the explicit estimates for $E_2(t)$ and $N_2(t)$, as they can be bounded via interpolation between $E_1(t), E_3(t)$ and $N_1(t), N_3(t)$, respectively. Although the increased weight generates additional terms, these are controlled by interpolating between the top-order energy $E_0$ and the low-order energy $E_3(t),\ N_3(t)$.
	
	\begin{lem}\label{lem E3}
		Under the assumptions of Theorem \ref{thm1} and assumption \eqref{ass}, there holds
		\begin{align*}
			E_3(t)\lesssim   & \mathcal{T}^{3-\sigma}\|(a_0,\mathbf{u}_0,\mathbf{B}_0) \|_{H^{s}}^2+\mathcal{T}^{\frac{2\sigma-1}{2}}E_0^{\frac12}(t)E_2^{\frac12}(t)+E_0^{\frac14}(t)N_3^{\frac34}(t)
			+E_{total}^{\frac32}(t),\\
			N_3(t)
			\lesssim&  \mathcal{T}^{3-\sigma}\| (a_0,\mathbf{u}_0,\mathbf{B}_0)\|_{H^{s}}^2+ E_3(t)+E_{total}^{\frac32}(t).
		\end{align*}
	\end{lem}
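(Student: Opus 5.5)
We follow the structure of Lemmas \ref{est bE1}, \ref{est tE1} and \ref{est N1}, now at regularity $s-3$ (resp.\ $s-4$ for $N_3$) and with weight $(\mathcal{T}+\tau)^{3-\sigma}$ in place of $(\mathcal{T}+\tau)^{1-\sigma}$. For $\overline{E}_3$ we perform $\dot H^{s-3}$ energy estimates on \eqref{equ ba} and \eqref{equ bB} without weight. The uncancelled linear couplings $-\operatorname{div}\overline{\mathbf{u}}$ and $-\mathbf{e}_3\overline{\operatorname{div}\mathbf{u}}$ are integrated by parts once and bounded by $\int_0^t\|\operatorname{div}\mathbf{u}\|_{H^{s-4}}\|(a,\mathbf{B})\|_{H^{s-2}}d\tau$. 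We insert the weights $(\mathcal{T}+\tau)^{\frac{2-\sigma}{2}}$ on $\mathbf{u}$ (from $E_2$) and $(\mathcal{T}+\tau)^{\frac{-1-\sigma}{2}}$ on $(a,\mathbf{B})$ (from $E_0$). The leftover factor $(\mathcal{T}+\tau)^{\sigma-\frac12}$ then gives $\mathcal{T}^{\frac{2\sigma-1}{2}}E_0^{1/2}E_2^{1/2}$ exactly as before.

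For $(\overline{\mathbf{u}},\overline{\mathcal{D}})$ we repeat \eqref{est dtbubD hs-1} at level $s-3$ with weight $(\mathcal{T}+\tau)^{3-\sigma}$. We then add a small multiple of the cross term $\frac{d}{d\tau}\big[(\mathcal{T}+\tau)^{3-\sigma}\int\nabla^{s-4}\overline{\mathbf{u}}\cdot\nabla^{s-4}\nabla\overline{\mathcal{D}}\,dx\big]$ to extract the damping $\int(\mathcal{T}+\tau)^{3-\sigma}\|\overline{\mathcal{D}}\|^2_{\dot H^{s-3}}$. The weight-derivative terms $(3-\sigma)(\mathcal{T}+\tau)^{2-\sigma}(\cdots)$ are absorbed by Poincaré \eqref{Poi1'} and the $\overline{\mathcal{D}}$-damping once $\mathcal{T}$ is large. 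The nonlinear terms $\overline{F_{\overline{\mathbf{u}}}},\overline{F_{\mathcal{D}}}$ are bounded as in \eqref{est Fbu hs-2}, \eqref{est FbD hs-1}, two derivatives lower, and each factor is paired with the weighted norms of $E_3$, $N_3$ and $E_0$.

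For $\widetilde{E}_3$ we use the same weighted energies with densities $P'(1+\overline a)/(1+\overline a)^2$ and $1/(1+\overline a)$. These weights again cancel the top-order linear couplings and the bad terms \eqref{equ bad b2}. The new difficulty is the weight-derivative term $(3-\sigma)(\mathcal{T}+\tau)^{2-\sigma}\|(\widetilde a,\widetilde{\mathbf{B}})\|^2_{H^{s-3}}$, since $\widetilde a,\widetilde{\mathbf{B}}$ have no dissipation at this level. We interpolate $\|f\|_{H^{s-3}}\lesssim\|f\|_{H^{s-4}}^{3/4}\|f\|_{H^{s}}^{1/4}$ and control $\|(\widetilde a,\widetilde{\mathbf{B}})\|_{H^{s-4}}\lesssim\|\widetilde d\|_{H^{s-4}}+\|\partial_{x_3}\mathbf{B}\|_{H^{s-4}}$ by the damping part of $N_3$. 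Hölder in time then gives
\[
\int_0^t(\mathcal{T}+\tau)^{2-\sigma}\|f\|_{H^{s-3}}^2\,d\tau\lesssim\Big(\int_0^t(\mathcal{T}+\tau)^{3-\sigma}\|f\|^2_{H^{s-4}}\,d\tau\Big)^{\frac34}\Big(\int_0^t(\mathcal{T}+\tau)^{-1-\sigma}\|f\|_{H^s}^2\,d\tau\Big)^{\frac14}.
\]
The exponents check: $\frac34(3-\sigma)+\frac14(-1-\sigma)=2-\sigma$. This produces the term $E_0^{1/4}N_3^{3/4}$. The velocity weight term is absorbed by dissipation through Poincaré. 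Nonlinear terms split as before into products in which one factor is bounded by $\sup\|(a,\mathbf{u},\mathbf{B})\|_{H^{s-1}}$ and the others carry the $(3-\sigma)$ weights. Whenever the top derivative lands on $(a,\mathbf{B})$ in $H^{s}$, the time weight is redistributed among $E_0$, $E_2$ and $E_3$, yielding $E_{total}^{3/2}$.

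For $N_3$ we repeat Lemma \ref{est N1} at level $s-4$ with weight $(\mathcal{T}+\tau)^{3-\sigma}$. This consists of the damped equation \eqref{equ dG} for $\widetilde d$, the parabolic estimate for $\widetilde{\mathbf{G}}$, and the cross term $\int\nabla^{s-4}\mathbb{P}(\rho\mathbf{u})\cdot\nabla^{s-4}\partial_{x_3}\mathbf{B}$, which recovers $\int(\mathcal{T}+\tau)^{3-\sigma}\|\partial_{x_3}\mathbf{B}\|^2_{H^{s-4}}$. Linear right-hand sides are bounded by $\nabla\widetilde{\mathbf{u}}$ and $\nabla\mathbf{u}$ in $H^{s-3}$, hence by $E_3$. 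The weight-derivative terms are lower order and absorbed for $\mathcal{T}$ large, and the nonlinear terms give $E_{total}^{3/2}$. The main obstacle is the weight-derivative term for $(\widetilde a,\widetilde{\mathbf{B}})$ in $\widetilde{E}_3$, which is handled by the interpolation above. A secondary difficulty is checking that every nonlinear term with the larger weight $(\mathcal{T}+\tau)^{3-\sigma}$ still closes; this relies on $\sigma<1/2$ and on interpolating $E_2$ between $E_1$ and $E_3$.
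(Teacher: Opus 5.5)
Your proposal is correct and follows essentially the same route as the paper. It uses unweighted $\dot H^{s-3}$ estimates for $(\overline a,\overline{\mathbf B})$ with the linear couplings giving $\mathcal{T}^{\frac{2\sigma-1}{2}}E_0^{1/2}E_2^{1/2}$, the $(\overline{\mathbf u},\overline{\mathcal D})$ energy plus the $\nabla^{s-4}$ cross term for the hidden damping, the $\overline a$-weighted oscillatory estimates with the $(\widetilde a,\widetilde{\mathbf B})$ weight-derivative terms handled by $H^{s-4}$--$H^{s}$ interpolation (your Hölder exponent check $\frac34(3-\sigma)+\frac14(-1-\sigma)=2-\sigma$ is exactly what produces $E_0^{1/4}N_3^{3/4}$), and the $(\widetilde d,\widetilde{\mathbf G})$ and $\mathbb{P}(\rho\mathbf u)\cdot\partial_{x_3}\mathbf B$ argument at level $s-4$ for $N_3$.
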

	\begin{proof}
		The estimate for  $\| (\overline{a},\overline{\mathbf{B}})\|_{H^{s-3}}$ follows by an argument analogous to that used for $\overline{E}_1$. Taking the $H^{s-3}$ inner products of \eqref{equ ba} and \eqref{equ bB} with $\overline{a}$ and $\overline{\mathbf B}$, respectively, and integrating in time, we obtain
		\begin{align*}
			\frac12  \| \overline{a} (t) \|^2_{\dot{H}^{s-3}}=&\frac12\|a_0\|_{H^{s-3}}^2- \int_0^t\int_{\mathbb{T}^3}\nabla^{s-3}(\overline{\mathbf{u}\cdot \nabla a})\cdot\nabla^{s-3}\overline{a} dxd \tau\\
			&-\int_0^t\int_{\mathbb{T}^3}\nabla^{s-3}(\overline{(1+a) \operatorname{div}\mathbf{u}})\cdot\nabla^{s-3}\overline{a} dxd \tau\\
			\lesssim &\|a_0\|_{H^{s-3}}^2+\sup_{\tau\in[0,t]}\|a \|_{H^{s-2}}\int_{0}^t(\mathcal{T}+\tau)^{\frac{2-\sigma}{2}}\|\nabla \mathbf{u} \|_{H^{s-2}}(\mathcal{T}+\tau)^{\frac{-1-\sigma}{2}}\|a \|_{H^{s}}  d\tau\\
			&+\mathcal{T}^{\frac{2\sigma-1}{2}}\sup_{\tau\in[0,t]}(1+\|a \|_{H^{s-3}})\int_{0}^t (\mathcal{T}+\tau)^{\frac{2-\sigma}{2}}\|\operatorname{div}\mathbf{u} \|_{H^{s-2}}(\mathcal{T}+\tau)^{\frac{-1-\sigma}{2}}\|a \|_{H^{s}} d \tau\\
			\lesssim &\|a_0\|_{H^{s-3}}^2+\mathcal{T}^{\frac{2\sigma-1}{2}}E_0^{\frac12}(t)E_2^{\frac12}(t)+E_0^{\frac12}(t)E_1^{\frac12}(t)E_2^{\frac12}(t),
		\end{align*}
		and
		\begin{align*}
			\frac{1}{2}  \| \overline{\mathbf{B}}(t) \|^2_{\dot{H}^{s-3}}=&\frac12\|\mathbf{B}_0\|_{H^{s-3}}^2- \int_{0}^t\int_{\mathbb{T}^3} \nabla^{s-3}(\overline{\operatorname{div} \mathbf{u}})\mathbf{e}_3\cdot\nabla^{s-3}\overline{\mathbf{B}} dxd\tau\\
			&+ \int_{0}^t\int_{\mathbb{T}^3} \nabla^{s-3}(-\overline{\mathbf{u}\cdot \nabla \mathbf{B}}+\overline{\mathbf{B}\cdot \nabla \mathbf{u}}-\overline{\mathbf{B}\operatorname{div}\mathbf{u}} )\cdot\nabla^{s-3}\overline{\mathbf{B}} dxd\tau\\
			\lesssim & \|\mathbf{B}_0\|_{H^{s-3}}^2+ \mathcal{T}^{\frac{2\sigma-1}{2}}\int_{0}^t(\mathcal{T}+\tau)^{\frac{2-\sigma}{2}}\| \operatorname{div}\mathbf{u}\|_{H^{s-2}}(\mathcal{T}+\tau)^{\frac{-1-\sigma}{2}}\|\mathbf{B} \|_{H^{s}} d\tau\\
			&+  \sup_{\tau\in[0,t]}\|\mathbf{B} \|_{H^{s-1}}\int_{0}^t(\mathcal{T}+\tau)^{\frac{2-\sigma}{2}}\|\nabla \mathbf{u} \|_{H^{s-2}}(\mathcal{T}+\tau)^{\frac{-1-\sigma}{2}}\|\mathbf{B}\|_{H^{s}}  d\tau\\
			\lesssim & \|\mathbf{B}_0\|_{H^{s-3}}^2+ \mathcal{T}^{\frac{2\sigma-1}{2}}E_0^{\frac12}(t)E_2^{\frac12}(t)+E_0^{\frac12}(t)E_1^{\frac12}(t)E_2^{\frac12}(t).
		\end{align*}
		
		To estimate $\|\overline{\mathbf{u}} \|^2_{\dot{H}^{s-3}}+\frac{1}{2}\|\overline{\mathcal{D}} \|^2_{\dot{H}^{s-3}}$, we require a proper distribution of the time weight $(\mathcal{T}+\tau)^{3-\sigma}$:
		\begin{align}\label{est dtbubD hs-3}
			\frac12\frac{d}{d\tau}& \Big((\mathcal{T}+\tau)^{3-\sigma}(\|\overline{\mathbf{u}} \|^2_{\dot{H}^{s-3}} +\frac12\|\overline{\mathcal{D}} \|^2_{\dot{H}^{s-3}}) \Big)-\frac{3-\sigma}{2}(\mathcal{T}+\tau)^{2-\sigma}(\|\overline{\mathbf{u}} \|^2_{\dot{H}^{s-3}}+\frac12\|\overline{\mathcal{D}} \|^2_{\dot{H}^{s-3}})\\
			&+
			\mu(\mathcal{T}+\tau)^{3-\sigma}\|\nabla \overline{\mathbf{u}} \|^2_{\dot{H}^{s-3}}+
			(\lambda+\mu)(\mathcal{T}+\tau)^{3-\sigma}\| \operatorname{div}  \overline{\mathbf{u}} \|^2_{\dot{H}^{s-3}} \nonumber
			\\
			=&\int_{\mathbb{T}^3}(\mathcal{T}+\tau)^{3-\sigma}\big(\nabla^{s-3}\nabla\overline{\mathcal{D}}\cdot\nabla^{s-3}\overline{\mathbf{u}}+ \nabla^{s-3}\overline{\operatorname{div}\mathbf{u}}\cdot\nabla^{s-3}\overline{\mathcal{D}}\big) dx\nonumber\\
			&+\int_{\mathbb{T}^3}(\mathcal{T}+\tau)^{3-\sigma}(\nabla^{s-3}\overline{F_{\overline{\mathbf{u}}}}\cdot\nabla^{s-3}\overline{\mathbf{u}}+\frac12 \nabla^{s-3}\overline{F_{\mathcal{D}}}\cdot\nabla^{s-3}\overline{\mathcal{D}} )dx,\nonumber
		\end{align}
		where we have again utilized the cancellation of the linear coupling terms:
		\begin{align*}
			\int_{\mathbb{T}^3}(\mathcal{T}+\tau)^{3-\sigma}\big(\nabla^{s-3}\nabla\overline{\mathcal{D}}\cdot\nabla^{s-3}\overline{\mathbf{u}}+ \nabla^{s-3}\overline{\operatorname{div}\mathbf{u}}\cdot\nabla^{s-3}\overline{\mathcal{D}}\big) dx=0.
		\end{align*}
		
		For the remaining nonlinear terms on the right-hand side of \eqref{est dtbubD hs-3}, integration by parts yields
		\begin{align*}
			\int_{\mathbb{T}^3}&(\mathcal{T}+\tau)^{3-\sigma}(\nabla^{s-3}\overline{F_{\overline{\mathbf{u}}}}\cdot\nabla^{s-3}\overline{\mathbf{u}}+\frac12 \nabla^{s-3}\overline{F_{\mathcal{D}}}\cdot\nabla^{s-3}\overline{\mathcal{D}}) dx\\
			=&-\int_{\mathbb{T}^3}(\mathcal{T}+\tau)^{3-\sigma}(\nabla^{s-4}\overline{F_{\overline{\mathbf{u}}}}\cdot\nabla^{s-2}\overline{\mathbf{u}}-\frac12 \nabla^{s-3}\overline{F_{\mathcal{D}}}\cdot\nabla^{s-3}\overline{\mathcal{D}})dx\\
			\leq & (\mathcal{T}+\tau)^{3-\sigma}(\|\overline{F_{\overline{\mathbf{u}}}} \|_{\dot{H}^{s-4}}\|\nabla \overline{\mathbf{u}} \|_{\dot{H}^{s-3}}+\|\overline{F_{\mathcal{D}}} \|_{\dot{H}^{s-3}}\| \overline{\mathcal{D}} \|_{\dot{H}^{s-3}}).
		\end{align*}
		
		Applying the estimates \eqref{est Fbu hs-2} and \eqref{est FbD hs-1},  we have
		\begin{align}\label{est Fbu hs-4}
			\| \overline{F_{\overline{\mathbf{u}}}}\|_{\dot{H}^{s-4}}\lesssim&(1+\| (a,\mathbf{B} ) \|_{H^{s-1}})\|(a,\mathbf{u},  \mathbf{B} ) \|_{H^{s-1}}(\|\overline{\mathcal{D}} \|_{\dot{H}^{s-3}}+\|\nabla\mathbf{u}\|_{H^{s-3}}+\|(\widetilde{a},\partial_{x_3}\widetilde{\mathbf{B}}) \|_{H^{s-4}}),
		\end{align}
		and
		\begin{align}\label{est FbD hs-3}
			\|\overline{F_{\mathcal{D}} } \|_{\dot{H}^{s-3}}\lesssim (1+\|(a,\mathbf{B}) \|_{H^{s-1}})(\|(a,\mathbf{B}) \|_{H^{s-3}}\|\nabla \mathbf{u} \|_{H^{s-3}}+\| (a,\mathbf{B})\|_{H^{s-2}}\|\nabla \mathbf{u}\|_{H^{s-4}} ) .
		\end{align}
		
		Multiplying these bounds by time weights and integrating over time yields
		\begin{align*}
			\int_{0}^t&(\mathcal{T}+\tau)^{3-\sigma}\|\overline{F_{\overline{\mathbf{u}}}} \|_{\dot{H}^{s-4}}\|\nabla \overline{\mathbf{u}} \|_{\dot{H}^{s-3}}d\tau\\
			\lesssim & \sup_{\tau\in[0,t]}(1+\| (a,\mathbf{B} ) \|_{H^{s-1}})\|(a,\mathbf{u},  \mathbf{B} ) \|_{H^{s-1}}\\
			&\qquad\times\int_{0}^t(\mathcal{T}+\tau)^{3-\sigma}(\|\overline{\mathcal{D}} \|_{\dot{H}^{s-3}}+\|\nabla\mathbf{u}\|_{H^{s-3}}+\|(\widetilde{a},\partial_{x_3}\widetilde{\mathbf{B}}) \|_{H^{s-4}})\|\nabla \overline{\mathbf{u}} \|_{\dot{H}^{s-3}}d\tau\\
			\lesssim & E_1^{\frac12}(t)E_3^{\frac12}(t)(E_3^{\frac12}(t)+N_3^{\frac12}(t)),
		\end{align*}
		and
		\begin{align*}
			\int_{0}^t&(\mathcal{T}+\tau)^{3-\sigma} \|\overline{F_{\mathcal{D}}} \|_{\dot{H}^{s-3}}\| \overline{\mathcal{D}} \|_{\dot{H}^{s-3}} d\tau\\
			\lesssim& \sup_{\tau\in[0,t]}(1+\|(a,\mathbf{B}) \|_{H^{s-1}})\|(a,\mathbf{B}) \|_{H^{s-1}}\int_{0}^t(\mathcal{T}+\tau)^{3-\sigma} \|\nabla \mathbf{u} \|_{H^{s-3}}  \| \overline{\mathcal{D}} \|_{\dot{H}^{s-3}} d\tau\\
			\lesssim & E_1^{\frac32}(t)+E_3^{\frac32}(t).
		\end{align*}
		
		Consequently, inserting these bounds back into the \eqref{est dtbubD hs-3} provides the coupled energy estimate
		\begin{align}\label{est buD hs-3}
			(\mathcal{T}+t)^{3-\sigma}&(\|\overline{\mathbf{u}} (t)\|^2_{\dot{H}^{s-3}} + \|\overline{\mathcal{D}} (t)\|^2_{\dot{H}^{s-3}})+\int_{0}^t(\mathcal{T}+\tau)^{3-\sigma}\|\nabla \overline{\mathbf{u}} \|^2_{\dot{H}^{s-3}}d\tau \\
			&\lesssim \mathcal{T}^{3-\sigma}\|(a_0,\mathbf{u}_0,\mathbf{B}_0) \|_{H^{s-3}}^2+ \mathcal{T}^{-1}\int_{0}^t(\mathcal{T}+\tau)^{3-\sigma}\|\overline{\mathcal{D}} \|^2_{\dot{H}^{s-3}}d\tau\nonumber \\
			&\quad+E_1^{\frac32}(t)+E_3^{\frac32}(t)+N_3^{\frac32}(t).\nonumber
		\end{align}
		
		To capture the damping effect of $\overline{\mathcal{D}}$ at lower regularity level, we employ an analogous energy functional. Taking its time derivative and applying the product rule yields
		\begin{align*}
			\frac{d}{d\tau}&\Big[(\mathcal{T}+\tau)^{3-\sigma}\int_{\mathbb{T}^3}\nabla^{s-4}\overline{\mathbf{u}}\cdot \nabla^{s-4}\nabla\overline{\mathcal{D}}  dx\Big]-(3-\sigma)(\mathcal{T}+\tau)^{2-\sigma}\int_{\mathbb{T}^3}\nabla^{s-4}\overline{\mathbf{u}}\cdot \nabla^{s-4}\nabla\overline{\mathcal{D}}  dx\\
			=&(\mathcal{T}+\tau)^{3-\sigma}\int_{\mathbb{T}^3}\nabla^{s-4}\partial_t\overline{\mathbf{u}}\cdot \nabla^{s-4}\nabla\overline{\mathcal{D}}  dx+(\mathcal{T}+\tau)^{3-\sigma}\int_{\mathbb{T}^3}\nabla^{s-4}\overline{\mathbf{u}}\cdot \nabla^{s-4}\nabla\partial_t\overline{\mathcal{D}}  dx.\nonumber
		\end{align*}
		
		Substituting the equations \eqref{equ bu} and \eqref{equ bD} into the right-hand side, we get
		\begin{align*}
			(\mathcal{T}+\tau)^{3-\sigma}&\int_{\mathbb{T}^3}\nabla^{s-4}\partial_t\overline{\mathbf{u}}\cdot \nabla^{s-4}\nabla\overline{\mathcal{D}}+\nabla^{s-4}\overline{\mathbf{u}}\cdot \nabla^{s-4}\nabla\partial_t\overline{\mathcal{D}} dx+(\mathcal{T}+\tau)^{3-\sigma}\|\overline{\mathcal{D}} \|^2_{\dot{H}^{s-3}}\\
			=&(\mathcal{T}+\tau)^{3-\sigma}\int_{\mathbb{T}^3}\nabla^{s-4} (\mu\Delta  \overline{\mathbf{u}}+(\lambda+\mu)\nabla (\operatorname{div} \overline{\mathbf{u}})+\overline{F_{\overline{\mathbf{u}}}} )\cdot \nabla^{s-4}\nabla\overline{\mathcal{D}}dx\\
			&+(\mathcal{T}+\tau)^{3-\sigma}\int_{\mathbb{T}^3}\nabla^{s-4}\overline{\mathbf{u}}\cdot \nabla^{s-4}\nabla (-2\overline{\operatorname{div}\mathbf{u}}+\overline{F_{\mathcal{D}}} ) dx\\
			\leq &\frac{1}{4}(\mathcal{T}+\tau)^{3-\sigma}\|\overline{\mathcal{D}} \|_{\dot{H}^{s-3}}^2+ C(\mathcal{T}+\tau)^{3-\sigma}\|\nabla \mathbf{u} \|_{\dot{H}^{s-3}}^2\\
			&+C(\mathcal{T}+\tau)^{3-\sigma}\|\overline{F_{\overline{\mathbf{u}}}}\|_{\dot{H}^{s-4}}^2+C(\mathcal{T}+\tau)^{3-\sigma}\|\overline{F_{\mathcal{D}}} \|_{\dot{H}^{s-4}}^2.
		\end{align*}
		
		Plugging the nonlinear estimates \eqref{est Fbu hs-4} and \eqref{est FbD hs-3} back into the differential inequality, we deduce that
		\begin{align*}
			\frac{d}{d\tau}&\Big[(\mathcal{T}+\tau)^{3-\sigma}\int_{\mathbb{T}^3}\nabla^{s-4}\overline{\mathbf{u}}\cdot \nabla^{s-4}\nabla\overline{\mathcal{D}}  dx\Big]+\frac12(\mathcal{T}+\tau)^{3-\sigma} \|\overline{\mathcal{D}} \|^2_{\dot{H}^{s-3}}\lesssim (\mathcal{T}+\tau)^{3-\sigma}\|\nabla \mathbf{u} \|_{\dot{H}^{s-3}}^2\\
			&+(\mathcal{T}+\tau)^{3-\sigma}(1+\| (a,\mathbf{B} ) \|_{H^{s-1}})^2\|(a,\mathbf{u},  \mathbf{B} ) \|_{H^{s-1}}^2(\|\overline{\mathcal{D}} \|_{\dot{H}^{s-3}}+\|\nabla\mathbf{u}\|_{H^{s-3}}+\|(\widetilde{a},\partial_{x_3}\widetilde{\mathbf{B}}) \|_{H^{s-4}})^2\\
			&+(\mathcal{T}+\tau)^{3-\sigma} (1+\|(a,\mathbf{B}) \|_{H^{s-1}})^2(\|(a,\mathbf{B}) \|_{H^{s-3}}\|\nabla \mathbf{u} \|_{H^{s-3}}+\| (a,\mathbf{B})\|_{H^{s-2}}\|\nabla \mathbf{u}\|_{H^{s-4}} )^2.
		\end{align*}
		
		Integrating this over time and applying H\"older inequality, we deduce 
		\begin{align*}
			(\mathcal{T}+t)^{3-\sigma}&\int_{\mathbb{T}^3}\nabla^{s-4}\overline{\mathbf{u}}\cdot \nabla^{s-4}\nabla\overline{\mathcal{D}}  dx+\frac12\int_{0}^t(\mathcal{T}+\tau)^{3-\sigma} \|\overline{\mathcal{D}} \|^2_{\dot{H}^{s-3}}d\tau \\
			\lesssim& \mathcal{T}^{3-\sigma}\|(a_0,\mathbf{u}_0,\mathbf{B}_0) \|_{H^{s-1}}^2+  \int_{0}^t(\mathcal{T}+\tau)^{3-\sigma}\|\nabla \mathbf{u} \|_{\dot{H}^{s-3}}^2d\tau
			+E_1^2(t)+E_3^2(t)+N_3^2(t).
		\end{align*}
		
		Multiplying this inequality  by a sufficiently small positive constant and adding it  to \eqref{est buD hs-3},  this yields the estimate for the averaged velocity and  the quantity $\overline{\mathcal{D}}$:
		\begin{align*}
			(\mathcal{T}+t)^{3-\sigma}&(\|\overline{\mathbf{u}}(t) \|^2_{\dot{H}^{s-3}} + \|\overline{\mathcal{D}}(t) \|^2_{\dot{H}^{s-3}})+\int_{0}^t(\mathcal{T}+\tau)^{3-\sigma}(\|\nabla \overline{\mathbf{u}} \|^2_{\dot{H}^{s-3}}+\| \overline{\mathcal{D}} \|^2_{\dot{H}^{s-3}} )d\tau \\
			\lesssim& \mathcal{T}^{3-\sigma}\|(a_0,\mathbf{u}_0,\mathbf{B}_0) \|_{H^{s-1}}^2+E_1^{\frac32}(t)+E_3^{\frac32}(t)+N_3^{\frac32}(t).\nonumber
		\end{align*}
		
		Combining this with the estimates for $\|(\overline{a},\overline{\mathbf{B}}) \|_{\dot{H}^{s-3}}$,  and recalling the definition of the lower-order averaged energy $\overline{E_3}(t)$, we conclude that
		\begin{align}\label{est bE3 s-3}
			\overline{E_3}(t)\lesssim   & \mathcal{T}^{3-\sigma}\|(a_0,\mathbf{u}_0,\mathbf{B}_0) \|_{H^{s-1}}^2+\mathcal{T}^{\frac{2\sigma-1}{2}}E_0^{\frac12}(t)E_2^{\frac12}(t)
			+E_{total}^{\frac32}(t).
		\end{align}
		
		We now turn to the oscillatory counterparts $(\widetilde{a},\widetilde{\mathbf{u}},\widetilde{\mathbf{B}})$.  For $\|\widetilde{a} \|_{\dot{H}^{s-3}}$, we follow the weighted energy identity from the high-order derivation with the time weight $(\mathcal{T}+\tau)^{3-\sigma}$:
		\begin{align*}
			\frac12\frac{d}{d\tau}&\Big[ (\mathcal{T}+\tau)^{3-\sigma}\int_{\mathbb{T}^3}\frac{P^{\prime}(1+\overline{a})}{(1+\overline{a})^2}|\nabla^{s-3} \widetilde{a}|^2dx\Big]+(\mathcal{T}+\tau)^{3-\sigma}\int_{\mathbb{T}^3}\frac{P^{\prime}(1+\overline{a})}{(1+\overline{a})} \nabla^{s-3}\operatorname{div}\widetilde{\mathbf{u}}\cdot \nabla^{s-3} \widetilde{a}dx\\
			=&\frac{3-\sigma}{2} (\mathcal{T}+\tau)^{2-\sigma}\int_{\mathbb{T}^3} \frac{P^{\prime}(1+\overline{a})}{(1+\overline{a})^2}|\nabla^{s-3}\widetilde{a}|^2dx+\mathcal{A}_1^*+\mathcal{A}_2^*+\mathcal{A}_3^*, 
		\end{align*}
		where the residual terms are defined as
		\begin{align*}
			\mathcal{A}_1^*=&\frac12(\mathcal{T}+\tau)^{3-\sigma}\int_{\mathbb{T}^3}\partial_{\tau}\Big(\frac{P^{\prime}(1+\overline{a})}{(1+\overline{a})^2} \Big)|\nabla^{s-3}\widetilde{a}|^2 dx,\\
			\mathcal{A}_2^*=&-(\mathcal{T}+\tau)^{3-\sigma}\int_{\mathbb{T}^3}\frac{P^{\prime}(1+\overline{a})}{(1+\overline{a})^2} [\nabla^{s-3},\overline{a}]\operatorname{div}\widetilde{\mathbf{u}}\cdot \nabla^{s-3} \widetilde{a}dx,\\
			\mathcal{A}_3^*=&- (\mathcal{T}+\tau)^{3-\sigma}\int_{\mathbb{T}^3}\frac{P^{\prime}(1+\overline{a})}{(1+\overline{a})^2} \nabla^{s-3}(\widetilde{a}\operatorname{div}\mathbf{u}+\widetilde{\mathbf{u}\cdot\nabla a}) \cdot\nabla^{s-3}\widetilde{a} dx.
		\end{align*}
		
		For the first term on the right-hand side, the increased time weight introduces a different scaling. Integrating over time and applying Sobolev interpolation, we obtain
		\begin{align*}
			\frac{3-\sigma}{2}\int_{0}^t&\int_{\mathbb{T}^3} (\mathcal{T}+\tau)^{2-\sigma} \frac{P^{\prime}(1+\overline{a})}{(1+\overline{a})^2}|\nabla^{s-3}\widetilde{a}|^2dxd\tau\\
			\lesssim& \int_{0}^t(\mathcal{T}+\tau)^{2-\sigma}\|\widetilde{a} \|_{H^{s-3}}^2d\tau\\
			\lesssim&\int_{0}^t (\mathcal{T}+\tau)^{\frac{-1-\sigma}{4}}\| \widetilde{a}\|_{H^{s}}^{\frac12}(\mathcal{T}+\tau)^{\frac{3(3-\sigma)}{4}}\|\widetilde{a} \|^{\frac32}_{H^{s-4}}d\tau\\
			\lesssim & E_0^{\frac14}(t)N_3^{\frac34}(t).
		\end{align*}
		
		To estimate $\mathcal{A}_1^*$, we substitute the equation for $\overline{a}$ into the time derivative of the weight, which yields
		\begin{align*}
			\int_{0}^t\mathcal{A}_1^*d\tau=&-\frac12\int_{0}^t\int_{\mathbb{T}^3} (\mathcal{T}+\tau)^{3-\sigma} \Big(\frac{P^{\prime}(1+\overline{a})}{(1+\overline{a})^2} \Big)^{\prime}(\operatorname{div}\overline{\mathbf{u}}+\overline{\mathbf{u}\cdot\nabla a}+\overline{a\operatorname{div}\mathbf{u}} )|\nabla^{s-3}\widetilde{a}|^2 dxd\tau\\
			\lesssim &\sup_{\tau\in[0,t]}(\mathcal{T}+\tau)^{3-\sigma}\|\widetilde{a} \|_{H^{s-3}}^2(1+\|a \|_{H^{3}} ) \int_{0}^t\| \mathbf{u}\|_{H^{3}}d\tau\\
			\lesssim & E_3^{\frac32}(t) .
		\end{align*}
		
		For the commutator term $\mathcal{A}_2^*$, integration by parts provides the bound
		\begin{align*}
			\int_{0}^t\mathcal{A}_2^*d\tau=&\int_{0}^t\int_{\mathbb{T}^3}(\mathcal{T}+\tau)^{3-\sigma}\nabla\Big(\frac{P^{\prime}(1+\overline{a})}{(1+\overline{a})^2} [\nabla^{s-3},\overline{a}]\operatorname{div}\widetilde{\mathbf{u}} \Big)\cdot\nabla^{s-4} \widetilde{a}dxd\tau \\
			\lesssim &\sup_{\tau\in[0,t]}\| \overline{a}\|_{H^{s-1}}\int_{0}^t(\mathcal{T}+\tau)^{3-\sigma}\| \operatorname{div}\widetilde{\mathbf{u}}\|_{H^{s-3}}\|\widetilde{a} \|_{H^{s-4}}d\tau\\
			\lesssim & (E_1(t)+E_3(t)+N_3(t))^{\frac32}.
		\end{align*}
		
		Similarly, a direct computation for the remaining nonlinear convection term $\mathcal{A}_3^*$ gives 
		\begin{align*}
			\int_{0}^t\mathcal{A}_3^*d\tau=&\int_{0}^t\int_{\mathbb{T}^3} (\mathcal{T}+\tau)^{3-\sigma}\nabla\Big(\frac{P^{\prime}(1+\overline{a})}{(1+\overline{a})^2} \nabla^{s-3}(\widetilde{a}\operatorname{div}\mathbf{u}+\widetilde{\mathbf{u}\cdot\nabla a})\Big)\cdot \nabla^{s-4}\widetilde{a} dxd\tau\\
			\lesssim &\sup_{\tau\in[0,t]} \|(a,\mathbf{u}) \|_{H^{s-1}} \int_{0}^t(\mathcal{T}+\tau)^{3-\sigma} \|  (\widetilde{a},\nabla\mathbf{u})\|_{H^{3}}\|\widetilde{a} \|_{H^{s-4}}d\tau\\
			\lesssim & (E_1(t)+E_3(t)+N_3(t))^{\frac32}.
		\end{align*}
		
		Combining these estimates and integrating over time, we obtain
		\begin{align}\label{est ta hs-3}
			\frac12(\mathcal{T}+t)^{3-\sigma}&\int_{\mathbb{T}^3}\frac{P^{\prime}(1+\overline{a})}{(1+\overline{a})^2}|\nabla^{s-3} \widetilde{a}|^2dx+\int_{0}^t\int_{\mathbb{T}^3}(\mathcal{T}+\tau)^{3-\sigma}\frac{P^{\prime}(1+\overline{a})}{(1+\overline{a})} \nabla^{s-3}\operatorname{div}\widetilde{\mathbf{u}}\cdot \nabla^{s-3} \widetilde{a}dxd\tau\\
			\lesssim & \mathcal{T}^{3-\sigma}\|a_0 \|_{H^{s-1}}^2+ E_0^{\frac14}(t)N_3^{\frac34}(t)+(E_1(t)+E_3(t)+N_3(t))^{\frac32}.\nonumber
		\end{align}
		
		For the oscillatory magnetic field $\widetilde{\mathbf{B}}$, we employ an analogous time-weighted energy identity:
		\begin{align*}
			\frac{1}{2}&\frac{d}{d\tau}\Big[(\mathcal{T}+\tau)^{3-\sigma}\int_{\mathbb{T}^3}\frac{1}{1+\overline{a}}|  \nabla^{s-3}\widetilde{\mathbf{B}} |^2dx\Big]\\
			&-(\mathcal{T}+\tau)^{3-\sigma}\int_{\mathbb{T}^3}\frac{1}{1+\overline{a}}\nabla^{s-3}(\partial_{x_3}\mathbf{u}-\mathbf{e}_3\operatorname{div} \widetilde{\mathbf{u}})\cdot\nabla^{s-3}\widetilde{\mathbf{B}}dx\\
			=&\frac{3-\sigma}{2}(\mathcal{T}+\tau)^{2-\sigma}\int_{\mathbb{T}^3}\frac{1}{1+\overline{a}}|  \nabla^{s-3}\widetilde{\mathbf{B}} |^2dx-\frac12(\mathcal{T}+\tau)^{3-\sigma}\int_{\mathbb{T}^3}\frac{\partial_{\tau}\overline{a}}{(1+\overline{a})^2}|  \nabla^{s-3}\widetilde{\mathbf{B}} |^2dx\\
			&+ (\mathcal{T}+\tau)^{3-\sigma}\int_{\mathbb{T}^3}\frac{1}{1+\overline{a}}\nabla^{s-3}(-\widetilde{\mathbf{u}\cdot \nabla \mathbf{B}}+\widetilde{\mathbf{B}\cdot \nabla \mathbf{u}}-\widetilde{\mathbf{B}\operatorname{div}\mathbf{u}} )\cdot\nabla^{s-3}\widetilde{\mathbf{B}} dx\\
			=&:\mathcal{B}_1^*+\mathcal{B}_2^*+\mathcal{B}_3^*.
		\end{align*}
		
		The second term on the left-hand side is isolated to be cancelled by the corresponding linear part in the velocity equation.   
		To control the term $\mathcal{B}_1^*$ generated by the time weight, we interpolate between the top-order magnetic energy and the lower-order magnetic damping:
		\begin{align*}
			\int_{0}^t\mathcal{B}_1^*d\tau=&\frac{3-\sigma}{2}\int_{0}^t\int_{\mathbb{T}^3}(\mathcal{T}+\tau)^{2-\sigma}\frac{1}{1+\overline{a}}|  \nabla^{s-3}\widetilde{\mathbf{B}} |^2dxd\tau\\
			\lesssim &\int_{0}^t\int_{\mathbb{T}^3}(\mathcal{T}+\tau)^{2-\sigma}|  \nabla^{s-3}\widetilde{\mathbf{B}} |^2dxd\tau\\
			\lesssim & \int_{0}^t(\mathcal{T}+\tau)^{\frac{-1-\sigma}{4}}\|\widetilde{\mathbf{B}} \|_{H^{s}}^{\frac12} (\mathcal{T}+\tau)^{\frac{3(3-\sigma)}{4}}\|\partial_{x_3}\widetilde{\mathbf{B}} \|_{H^{s-4}}^{\frac32}d\tau\\
			\lesssim &  E_0^{\frac14}(t)N_3^{\frac34}(t).
		\end{align*} 
		
		Invoking the equation for $\overline{a}$, $\mathcal{B}_2^*$ is bounded by
		\begin{align*}
			\int_{0}^t\mathcal{B}_2^*d\tau=& -\frac12\int_{0}^t\int_{\mathbb{T}^3}(\mathcal{T}+\tau)^{3-\sigma}\frac{\partial_{\tau}\overline{a}}{(1+\overline{a})^2}|  \nabla^{s-3}\widetilde{\mathbf{B}} |^2dxd\tau\\
			=& \frac12\int_{0}^t\int_{\mathbb{T}^3}(\mathcal{T}+\tau)^{3-\sigma}\frac{1}{(1+\overline{a})^2}(\operatorname{div}\overline{\mathbf{u}}+\overline{\mathbf{u}\cdot \nabla a}+\overline{a\operatorname{div}\mathbf{u}} )|  \nabla^{s-3}\widetilde{\mathbf{B}} |^2dxd\tau\\
			\lesssim & \sup_{\tau\in[0,t]}(\mathcal{T}+\tau)^{\frac{3-\sigma}{2}}\| \widetilde{\mathbf{B}}\|_{H^{s-3}}\int_{0}^t(\mathcal{T}+\tau)^{\frac{1}{2}}\| \nabla \mathbf{u}\|_{H^{2}}(\mathcal{T}+\tau)^{\frac{2-\sigma}{2}}\|\partial_{x_3} \widetilde{\mathbf{B}}\|_{H^{s-3}}d\tau\\
			\lesssim & E_3(t)N_1^{\frac14}(t)N_3^{\frac14}(t).
		\end{align*} 
		
		For the nonlinear magnetic terms $\mathcal{B}_3^*$, we again split them into two components:
		\begin{align*}
			\mathcal{B}_3^*=&(\mathcal{T}+\tau)^{3-\sigma}\int_{\mathbb{T}^3}\frac{1}{1+\overline{a}}\nabla^{s-3}(-\widetilde{\mathbf{u}\cdot \nabla \mathbf{B}})\cdot\nabla^{s-3}\widetilde{\mathbf{B}} dx\\
			&+(\mathcal{T}+\tau)^{3-\sigma}\int_{\mathbb{T}^3}\frac{1}{1+\overline{a}}\nabla^{s-3}(\widetilde{\mathbf{B}\cdot \nabla \mathbf{u}}-\widetilde{\mathbf{B}\operatorname{div}\mathbf{u}} )\cdot\nabla^{s-3}\widetilde{\mathbf{B}} dx\\
			=&: \mathcal{B}_{3,1}^*+\mathcal{B}_{3,2}^*.
		\end{align*}
		The estimate for $\mathcal{B}_{3,1}^*$ is slightly different from Subsection \ref{Sec est tE1}. Integrating by parts gives 
		\begin{align*}
			\mathcal{B}_{3,1}^*=& -(\mathcal{T}+\tau)^{3-\sigma}\int_{\mathbb{T}^3}\frac{1}{1+\overline{a}}\nabla^{s-2}(\widetilde{\mathbf{u}\cdot \nabla   \mathbf{B} } )\cdot\nabla^{s-4}\widetilde{\mathbf{B}} dx\\
			&-(\mathcal{T}+\tau)^{3-\sigma}\int_{\mathbb{T}^3}\nabla\Big(\frac{1}{1+\overline{a}}\Big)\nabla^{s-3}(\widetilde{\mathbf{u}\cdot \nabla   \mathbf{B} } )\cdot\nabla^{s-4}\widetilde{\mathbf{B}} dx\\
			\lesssim & (\mathcal{T}+\tau)^{3-\sigma}\|\mathbf{B} \|_{H^{s-1}}\|\nabla\mathbf{u} \|_{H^{s-3}}\|\widetilde{\mathbf{B}} \|_{H^{s-4}}.
		\end{align*}
		
		Following the same procedure, we first isolate the top-order terms that will cancel with those in the velocity equation; the residual components of  $\mathcal{B}_{3,2}^*$ are bounded via product estimates,
		\begin{align*}
			\mathcal{B}_{3,2}^*&-(\mathcal{T}+\tau)^{3-\sigma}\int_{\mathbb{T}^3}\frac{1}{1+\overline{a}}(\overline{\mathbf{B}}\cdot \nabla \nabla^{s-3}\widetilde{\mathbf{u}}-\overline{\mathbf{B}}\nabla^{s-3}\operatorname{div}\widetilde{\mathbf{u}} )\cdot\nabla^{s-3}\widetilde{\mathbf{B}} dx\\
			=&-(\mathcal{T}+\tau)^{3-\sigma}\sum_{k=1}^{s-3}\binom{s-3}{k}\\
			&\qquad\qquad\times\int_{\mathbb{T}^3}\nabla\Big(\frac{1}{1+\overline{a}} (\nabla^{k}\overline{\mathbf{B}}\cdot \nabla \nabla^{s-3-k}\widetilde{\mathbf{u}}-\nabla^{k}\overline{\mathbf{B}}\nabla^{s-3-k}\operatorname{div}\widetilde{\mathbf{u}} )\Big)\cdot\nabla^{s-4}\widetilde{\mathbf{B}} dx \\
			&-(\mathcal{T}+\tau)^{3-\sigma}\int_{\mathbb{T}^3}\nabla\Big(\frac{1}{1+\overline{a}}\nabla^{s-3}(\widetilde{\mathbf{B}}\cdot \nabla \mathbf{u}-\widetilde{\mathbf{B}}\operatorname{div}\mathbf{u} )\Big)\cdot\nabla^{s-4}\widetilde{\mathbf{B}} dx\\
			\lesssim  &(\mathcal{T}+\tau)^{3-\sigma}(\| \overline{\mathbf{B}}\|_{H^{4}}\|\nabla \mathbf{u} \|_{H^{s-3}}+\| \overline{\mathbf{B}}\|_{H^{s-2}}\|\nabla \mathbf{u} \|_{H^{3}} )\| \widetilde{\mathbf{B}}\|_{H^{s-4}} \\
			&+(\mathcal{T}+\tau)^{3-\sigma}(\| \widetilde{\mathbf{B}}\|_{H^{3}}\|\nabla \mathbf{u} \|_{H^{s-2}}+\| \widetilde{\mathbf{B}}\|_{H^{s-2}}\|\nabla \mathbf{u} \|_{H^{3}} )\| \widetilde{\mathbf{B}}\|_{H^{s-4}} .
		\end{align*}
		
		Integrating over time yields
		\begin{align*}
			\int_{0}^t\mathcal{B}_3^*d\tau&-\int_{0}^t\int_{\mathbb{T}^3}(\mathcal{T}+\tau)^{3-\sigma}\frac{1}{1+\overline{a}}(\overline{\mathbf{B}}\cdot \nabla \nabla^{s-3}\widetilde{\mathbf{u}}-\overline{\mathbf{B}}\nabla^{s-3}\operatorname{div}\widetilde{\mathbf{u}} )\cdot\nabla^{s-3}\widetilde{\mathbf{B}} dxd\tau\\
			\lesssim &\sup_{\tau\in[0,t]}\|\overline{\mathbf{B}} \|_{H^{s-1}}\int_{0}^t (\mathcal{T}+\tau)^{3-\sigma}\|\nabla\mathbf{u} \|_{H^{s-3}}\|\widetilde{\mathbf{B}} \|_{H^{s-4}}d\tau\\
			&+\sup_{\tau\in[0,t]}\|  \mathbf{u} \|_{H^{s-1}}\int_{0}^t(\mathcal{T}+\tau)^{3-\sigma}  \| \widetilde{\mathbf{B}}\|^2_{H^{s-4}} d\tau\\
			&+\sup_{\tau\in[0,t]}\| \widetilde{\mathbf{B}}\|_{H^{s-2}}\int_{0}^t(\mathcal{T}+\tau)^{3-\sigma} \|\nabla \mathbf{u} \|_{H^{3}}  \| \widetilde{\mathbf{B}}\|_{H^{s-4}} d\tau\\
			\lesssim& (E_1(t)+E_3(t)+N_1(t)+N_3(t))^{\frac32} .
		\end{align*}
		
		Combining these estimates and integrating over time, it follows that
		\begin{align}\label{est tb hs-3}
			(\mathcal{T}+t)^{3-\sigma}&\|\widetilde{\mathbf{B}} \|_{H^{s-3}}^2-\int_{0}^{t}\int_{\mathbb{T}^3}(\mathcal{T}+\tau)^{3-\sigma}\frac{1}{1+\overline{a}}\nabla^{s-3}(\partial_{x_3}\mathbf{u}-\mathbf{e}_3\operatorname{div} \widetilde{\mathbf{u}})\cdot\nabla^{s-3}\widetilde{\mathbf{B}}dxd\tau\\
			&-\int_{0}^{t}\int_{\mathbb{T}^3}(\mathcal{T}+\tau)^{3-\sigma}\frac{1}{1+\overline{a}}(\overline{\mathbf{B}}\cdot \nabla \nabla^{s-3}\widetilde{\mathbf{u}}-\overline{\mathbf{B}}\nabla^{s-3}\operatorname{div}\widetilde{\mathbf{u}} )\cdot\nabla^{s-3}\widetilde{\mathbf{B}} dxd\tau\nonumber\\
			\lesssim & \mathcal{T}^{3-\sigma}\|(a_0,\mathbf{B}_0) \|_{H^{s-1}}^2+E_0^{\frac14}(t)N_3^{\frac34}(t)+(E_1(t)+E_3(t)+N_1(t)+N_3(t))^{\frac32}.\nonumber
		\end{align}
		
		For the oscillatory velocity  $\widetilde{\mathbf{u}}$, we recall its equation:
		\begin{align*}
			\partial_t \widetilde{\mathbf{u}}  -
			\mu\Delta  \widetilde{\mathbf{u}}-(\lambda+\mu)\nabla (\operatorname{div} \widetilde{\mathbf{u}})+ \widetilde{h(a)\nabla a}
			-\widetilde{\rho^{-1}\partial_{x_3} \mathbf{B}}
			+\widetilde{\rho^{-1}\nabla  B_3}=\widetilde{F_{\mathbf{u}}},
		\end{align*} 
		where $h(a)=\frac{P^{\prime}(1+a)}{1+a}$ and the nonlinear term is defined by
		\begin{align*}
			F_{\mathbf{u}}=&-\mathbf{u}\cdot \nabla \mathbf{u}+\rho^{-1}(\mathbf{B}\cdot \nabla \mathbf{B}-\mathbf{B}\nabla \mathbf{B})-\mu I(a)\Delta \mathbf{u}-(\lambda +\mu) I(a)\nabla\operatorname{div}\mathbf{u}.
		\end{align*}
		Taking the appropriate time-weighted $\dot{H}^{s-3}$ inner product against $\widetilde{\mathbf{u}}$ yields
		\begin{align*}
			\frac12\frac{d}{d\tau} & \Big((\mathcal{T}+\tau)^{3-\sigma}\|\widetilde{\mathbf{u}} \|^2_{\dot{H}^{s-3}}\Big)  -\frac{3-\sigma}{2}(\mathcal{T}+\tau)^{2-\sigma}\|\widetilde{\mathbf{u}} \|^2_{\dot{H}^{s-3}}+
			\mu(\mathcal{T}+\tau)^{3-\sigma}\|\nabla \widetilde{\mathbf{u}} \|^2_{\dot{H}^{s-3}}\\
			&+
			(\lambda+\mu)(\mathcal{T}+\tau)^{3-\sigma}\| \operatorname{div}  \widetilde{\mathbf{u}} \|^2_{\dot{H}^{s-3}} 
			\\
			=&(\mathcal{T}+\tau)^{3-\sigma}\int_{\mathbb{T}^3}\nabla^{s-3}\Big[-\widetilde{h(a)\nabla a}
			+(\widetilde{\rho^{-1}\partial_{x_3} \mathbf{B}}
			-\widetilde{\rho^{-1}\nabla  B_3})+\widetilde{F_{\mathbf{u}}} \Big]\cdot\nabla^{s-3}\widetilde{\mathbf{u}}dx\\
			=&(\mathcal{T}+\tau)^{3-\sigma}(\mathcal{U}_1^*+\mathcal{U}_2^*)+(\mathcal{T}+\tau)^{3-\sigma}\int_{\mathbb{T}^3}\nabla^{s-3}\widetilde{F_{\mathbf{u}}}\cdot\nabla^{s-3}\widetilde{\mathbf{u}}dx,
		\end{align*}
		where  the coupling terms are denoted by
		\begin{align*}
			\mathcal{U}_1^*=&-\int_{\mathbb{T}^3}\nabla^{s-3}\big(\widetilde{h(a)\nabla a}
			\big)\cdot\nabla^{s-3}\widetilde{\mathbf{u}}dx,\\
			\mathcal{U}_2^*=& \int_{\mathbb{T}^3}\nabla^{s-3}\big(
			\widetilde{\rho^{-1}\partial_{x_3} \mathbf{B}}
			-\widetilde{\rho^{-1}\nabla  B_3} \big)\cdot\nabla^{s-3}\widetilde{\mathbf{u}}dx.
		\end{align*}
		
		To absorb the negative term arising from the time weight on the left-hand side, the Poincar\'e inequality yields
		\begin{align*}
			\frac{3-\sigma}{2}(\mathcal{T}+\tau)^{2-\sigma}\|\widetilde{\mathbf{u}} \|^2_{H^{s-3}}\leq& \frac{3-\sigma}{2\mathcal{T}}(\mathcal{T}+\tau)^{3-\sigma}\|\nabla \widetilde{\mathbf{u}} \|^2_{H^{s-3}}\\
			\leq& \frac{\mu}{4}(\mathcal{T}+\tau)^{3-\sigma}\|\nabla \widetilde{\mathbf{u}} \|^2_{H^{s-3}},
		\end{align*}
		provided we choose $\mathcal{T} \geq \frac{2(3-\sigma)}{\mu}$. 
		
		For the coupling terms $\mathcal{U}_1^*$ and $\mathcal{U}_2^*$, similar procedure as in Section \ref{Sec est tE1} gives that
		\begin{align*}
			\mathcal{U}_1^*-\int_{\mathbb{T}^3}h(\overline{a}) \nabla^{s-3}\operatorname{div}\widetilde{\mathbf{u}} \nabla^{s-3} \widetilde{a}dx
			\lesssim  \|a \|_{H^{s-3}}\|\widetilde{a} \|_{H^{s-4}}\|\nabla\widetilde{\mathbf{u}} \|_{H^{s-3}},
		\end{align*}
		and
		\begin{align*}
			\mathcal{U}_2^*&+\int_{\mathbb{T}^3}\frac{1}{1+\overline{a}}\nabla^{s-3}(\partial_{x_3}\mathbf{u}-\mathbf{e}_3\operatorname{div} \widetilde{\mathbf{u}})\cdot\nabla^{s-3}\widetilde{\mathbf{B}}dx
			\lesssim \|(a,\mathbf{B}) \|_{H^{s-3}}\|(\widetilde{a},\widetilde{\mathbf{B}}) \|_{H^{s-4}}\|\nabla \mathbf{u} \|_{H^{s-3}}.
		\end{align*}
		
		Moreover, the lower regularity needed here allows the nonlinear term to be bounded by a straightforward product estimate,
		\begin{align*}
			\int_{\mathbb{T}^3}\nabla^{s-3}\widetilde{F_{\mathbf{u}}} \cdot\nabla^{s-3}\widetilde{\mathbf{u}}dx &+\int_{\mathbb{T}^3}\frac{1}{1+\overline{a}}(\overline{\mathbf{B}}\cdot \nabla \nabla^{s-3}\widetilde{\mathbf{u}}-\overline{\mathbf{B}}\nabla^{s-3}\operatorname{div}\widetilde{\mathbf{u}} )\cdot\nabla^{s-3}\widetilde{\mathbf{B}} dx\\
			\lesssim &  \|(a,\mathbf{u},\mathbf{B}) \|_{H^{s-3}}(\|(\widetilde{a},\widetilde{\mathbf{B}}) \|_{H^{s-4}}+\|\nabla   \mathbf{u}\|_{H^{s-3}})\|\nabla \mathbf{u} \|_{H^{s-3}}.
		\end{align*}
		
		Invoking the above estimates, isolating the exact cancellation terms, and integrating over time, we obtain 
		\begin{align*}
			(\mathcal{T}+t)^{3-\sigma}&\|\widetilde{\mathbf{u}} \|_{\dot{H}^{s-3}}^2+\int_{0}^{t}(\mathcal{T}+\tau)^{3-\sigma} \|\nabla \widetilde{\mathbf{u}} \|^2_{\dot{H}^{s-3}}d\tau\\
			&-\int_{0}^{t}\int_{\mathbb{T}^3}(\mathcal{T}+\tau)^{3-\sigma}h(\overline{a}) \nabla^{s-3}\operatorname{div}\widetilde{\mathbf{u}} \cdot\nabla^{s-3} \widetilde{a}dxd\tau\\
			&+\int_{0}^{t}\int_{\mathbb{T}^3}(\mathcal{T}+\tau)^{3-\sigma}\frac{1}{1+\overline{a}}\nabla^{s-3}(\partial_{x_3}\mathbf{u}-\mathbf{e}_3\operatorname{div} \widetilde{\mathbf{u}})\cdot\nabla^{s-3}\widetilde{\mathbf{B}}dxd\tau\\
			&+\int_{0}^{t}\int_{\mathbb{T}^3}(\mathcal{T}+\tau)^{3-\sigma}\Big(\frac{1}{1+\overline{a}}(\overline{\mathbf{B}}  \nabla \nabla^{s-3}\widetilde{ \mathbf{B}}- \overline{\mathbf{B}} \cdot\nabla\nabla^{s-3} \widetilde{ \mathbf{B}})\Big) \cdot\nabla^{s-3 }\widetilde{\mathbf{u}}dxd\tau\\
			\lesssim& \mathcal{T}^{3-\sigma}\|(a_0,\mathbf{u}_0,\mathbf{B}_0) \|_{H^{s}}^2+(E_1(t)+E_3(t)+N_3(t))^{\frac32}.
		\end{align*}
		
		Finally, combining this estimate with the bounds \eqref{est ta hs-3} and \eqref{est tb hs-3} for $\widetilde{a}$ and $\widetilde{\mathbf{B}}$, the remaining higher-order couplings cancel out, thereby it follows that 
		\begin{align}\label{est tE3 s-3}
			\widetilde{E}_3(t)\lesssim \mathcal{T}^{3-\sigma}\|(a_0,\mathbf{u}_0,\mathbf{B}_0) \|_{H^{s}}^2+E_0^{\frac14}(t)N_3^{\frac34}(t)+E_{total}^{\frac32}(t).
		\end{align}

		Taking the weighted inner product for $\widetilde{d}$ yields
		\begin{align*}
			\frac12\frac{d}{d\tau}\Big((\mathcal{T}+\tau)^{3-\sigma}&\|\widetilde{d} \|^2_{\dot{H}^{s-4}}\Big)-\frac{3-\sigma}{2}(\mathcal{T}+\tau)^{2-\sigma}\| \widetilde{d}\|^2_{\dot{H}^{s-4}}+\frac{2}{\nu}(\mathcal{T}+\tau)^{3-\sigma}\| \widetilde{d}\|^2_{\dot{H}^{s-4}}\\
			=&\int_{\mathbb{T}^3}(\mathcal{T}+\tau)^{3-\sigma}\nabla^{s-4}(\partial_{x_3}u_3-2\operatorname{div}\widetilde{\mathbf{G}})\cdot\nabla^{s-4} \widetilde{d}  dx\\
			&+\int_{\mathbb{T}^3}(\mathcal{T}+\tau)^{3-\sigma}\nabla^{s-4}\widetilde{F}_d\cdot\nabla^{s-4} \widetilde{d}  dx.
		\end{align*}
		
		Applying Cauchy-Schwarz and Young inequalities to the linear coupling term gives
		\begin{align}\label{est hs-4 dlin}
			\int_{\mathbb{T}^3}&(\mathcal{T}+\tau)^{3-\sigma}\nabla^{s-4}(\partial_{x_3}u_3-2\operatorname{div}\widetilde{\mathbf{G}})\cdot\nabla^{s-4} \widetilde{d}  dx\\
			&\leq \frac{1}{2\nu}(\mathcal{T}+\tau)^{3-\sigma}\|\widetilde{d} \|^2_{H^{s-4}}+C(\mathcal{T}+\tau)^{3-\sigma}(\|\nabla \widetilde{\mathbf{G}} \|^2_{H^{s-4}}+\|\nabla \widetilde{\mathbf{u}} \|^2_{H^{s-4}}).\nonumber
		\end{align}
		
		The corresponding nonlinear forcing term $\widetilde{F_d}$ is bounded similarly. Utilizing the standard product estimates, we find
		\begin{align*}
			\int_{\mathbb{T}^3}(\mathcal{T}+\tau)^{3-\sigma}\nabla^{s-4}\widetilde{F_d}\nabla^{s-4} \widetilde{d}  dx\leq  \frac{1}{2\nu}(\mathcal{T}+\tau)^{3-\sigma}\|\widetilde{d} \|^2_{H^{s-4}}+C(\mathcal{T}+\tau)^{3-\sigma}\| \widetilde{F_{d}}\|^2_{H^{s-4}},
		\end{align*}
		where 
		\begin{align}\label{est ftd hs-4}
			\|\widetilde{F_d} \|_{H^{s-4}}\lesssim & \|\widetilde{\mathbf{u}\cdot \nabla a} \|_{H^{s-4}}+\|\widetilde{a\operatorname{div}\mathbf{u}} \|_{H^{s-4}}+\|\widetilde{\mathbf{u}\cdot \nabla B_3} \|_{H^{s-4}}\\
			&+\|\widetilde{\mathbf{B}\cdot \nabla u_3} \|_{H^{s-4}}+\|\widetilde{B_3\operatorname{div}\mathbf{u}} \|_{H^{s-4}}\nonumber\\
			\lesssim & \|a \|_{H^{s-3}}\| \mathbf{u} \|_{H^{s-4}}+\| a\|_{H^{s-4}}\| \nabla \mathbf{u}\|_{H^{s-4}}\nonumber\\
			&+\| \mathbf{B}\|_{H^{s-3}}\|\mathbf{u} \|_{H^{s-4}}+\|\mathbf{B} \|_{H^{s-4}}\| \nabla \mathbf{u}\|_{H^{s-4}}\nonumber\\
			\lesssim & \|(a,\mathbf{B}) \|_{H^{s-3}}\| \nabla\mathbf{u} \|_{H^{s-3}}.\nonumber
		\end{align}
		Integrating over time, this yields
		\begin{align}\label{est hs-4 dnonl}
			\int_{0}^{t}&\int_{\mathbb{T}^3}(\mathcal{T}+\tau)^{3-\sigma}\nabla^{s-4}\widetilde{F_d}\cdot\nabla^{s-4} \widetilde{d}  dxd\tau  \\
			&\leq\frac{1}{2\nu}\int_{0}^{t}(\mathcal{T}+\tau)^{3-\sigma}\|\widetilde{d} \|^2_{H^{s-4}}d\tau +C\sup_{\tau\in[0,t]}  \|(a,\mathbf{B}) \|_{H^{s-3}}^2\int_{0}^{t}(\mathcal{T}+\tau)^{3-\sigma} \|\nabla \mathbf{u} \|_{H^{s-3}}^2d\tau.\nonumber
		\end{align}
		
		Combining \eqref{est hs-4 dlin}, \eqref{est hs-4 dnonl}, and the definition of the energy functionals $E_i(t)$, we deduce that
		\begin{align}\label{est td hs-4}
			(\mathcal{T}+t)^{3-\sigma}&\|\widetilde{d} \|^2_{H^{s-4}}+\frac{1}{\nu}\int_{0}^t(\mathcal{T}+\tau)^{3-\sigma}\| \widetilde{d}\|^2_{H^{s-4}}d\tau
			\\
			\lesssim& \mathcal{T}^{3-\sigma}\| (a_0,\mathbf{B}_0)\|_{H^{s-4}}^2+\int_{0}^t (\mathcal{T}+\tau)^{3-\sigma}(\|\nabla \widetilde{\mathbf{G}} \|^2_{H^{s-4}}+\|\nabla\widetilde{\mathbf{u}} \|^2_{H^{s-4}})d\tau+E_1(t)E_3(t).\nonumber
		\end{align}
		
		Turning to the auxiliary quantity $\widetilde{\mathbf{G}}$, taking the weighted $\dot{H}^{s-4}$ inner product of its equation \eqref{equ dG0} provides
		\begin{align}\label{est dtG hs-4}
			\frac12\frac{d}{d\tau}&\Big((\mathcal{T}+\tau)^{3-\sigma}\|\widetilde{\mathbf{G}} \|^2_{\dot{H}^{s-4}}\Big)-\frac{3-\sigma}{2}(\mathcal{T}+\tau)^{2-\sigma}\| \widetilde{\mathbf{G}}\|^2_{\dot{H}^{s-4}}+\nu(\mathcal{T}+\tau)^{3-\sigma}\| \nabla\widetilde{\mathbf{G}}\|^2_{\dot{H}^{s-4}}\\
			=& \int_{\mathbb{T}^3}(\mathcal{T}+\tau)^{3-\sigma}\nabla^{s-4}(\frac{2}{\nu}\mathbb{Q}\widetilde{\mathbf{u}}-\frac{1}{\nu}\Delta^{-1}\nabla\partial_{x_3}u_3+\mathbb{Q}\widetilde{F_\mathbf{G}})\nabla^{s-4} \widetilde{\mathbf{G}} dx\nonumber\\
			\leq & \frac{\nu}{2}(\mathcal{T}+\tau)^{3-\sigma}\|\nabla \widetilde{\mathbf{G}} \|_{H^{s-4}}^2+C(\mathcal{T}+\tau)^{3-\sigma}(\|\widetilde{\mathbf{u}} \|_{H^{s-5}}^2+\|\widetilde{F_\mathbf{G}} \|_{H^{s-5}}^2),\nonumber
		\end{align}
		Provided the time shift parameter $\mathcal{T}$ is chosen sufficiently large, the negative term arising from the time weight on the left-hand side is absorbed by the viscous dissipation.  For the nonlinear forcing $\widetilde{F_{\mathbf{G}}}$, invoking the estimate \eqref{est ftd hs-4}
		\begin{align*}
			\|\widetilde{F_{\mathbf{G}}} \|_{H^{s-5}}\lesssim & \|\widetilde{\mathbf{u}\cdot \nabla \mathbf{u}}\|_{H^{s-5}}+\|\widetilde{\mathbf{B}\cdot \nabla \mathbf{B}} \|_{H^{s-5}}+\|\widetilde{ \mathbf{B}\nabla \mathbf{B}} \|_{H^{s-5}}+\|\widetilde{k(a)\nabla a} \|_{H^{s-5}}+\|\widetilde{ I(a)\Delta \mathbf{u}}\|_{H^{s-5}}\\
			&+\|\widetilde{ I(a)\nabla\operatorname{div}\mathbf{u}} \|_{H^{s-5}}+\| \widetilde{I(a)\partial_{x_3} \mathbf{B} }\|_{H^{s-5}}+\| \widetilde{I(a)\nabla B_3 }\|_{H^{s-5}}\\
			&+\|\widetilde{I(a)\mathbf{B}\cdot \nabla \mathbf{B} } \|_{H^{s-5}}+\|\widetilde{I(a)\mathbf{B}\nabla \mathbf{B}}\|_{H^{s-5}}+\|\frac{1}{\nu}\Delta^{-1}\nabla  \widetilde{F_{d} } \|_{H^{s-5}}\\
			\lesssim & \| \widetilde{\mathbf{u}}\|_{H^{s-4}}\|\mathbf{u} \|_{H^{s-4}}+\|\widetilde{\mathbf{B}} \|_{H^{s-4}}\| \mathbf{B}\|_{H^{s-4}}+\|\widetilde{a} \|_{H^{s-4}}\|a \|_{H^{s-4}}\\
			&+\| \widetilde{a}\|_{H^{s-5}}\|\nabla\mathbf{u} \|_{H^{s-4}}+\| a\|_{H^{s-5}}\|\nabla\widetilde{\mathbf{u}}  \|_{H^{s-4}}\\
			&+\|a \|_{H^{s-5}}\|\partial_{x_3}\mathbf{B} \|_{H^{s-5}}
			+\|\widetilde{a} \|_{H^{s-5}}\|B_3 \|_{H^{s-4}}+\| \overline{a}\|_{H^{s-5}}\|\widetilde{\mathbf{B}}_3 \|_{H^{s-4}}\\
			&+\|\widetilde{a} \|_{H^{s-5}}\| \mathbf{B}\|^2_{H^{s-4}}+\|a \|_{H^{s-5}}\|\mathbf{B} \|_{H^{s-4}}\|\widetilde{\mathbf{B}} \|_{H^{s-4}}+\|(a,\mathbf{B}) \|_{H^{s-4}}\| \nabla\mathbf{u} \|_{H^{s-5}}\\
			\lesssim & (\|(a,\mathbf{u},\mathbf{B}) \|_{H^{s-4}}+\|(\mathbf{u},\mathbf{B}) \|^2_{H^{s-4}})\|(\widetilde{a},\nabla\mathbf{u},\widetilde{\mathbf{B}}) \|_{H^{s-4}}.
		\end{align*}
		Inserting the above estimate into \eqref{est dtG hs-4} and integrating over time, it follows that
		\begin{align}\label{est tG hs-4}
			(\mathcal{T}+t)^{3-\sigma}&\|\widetilde{\mathbf{G}} \|^2_{H^{s-4}}+\int_{0}^t(\mathcal{T}+\tau)^{3-\sigma}\| \nabla\widetilde{\mathbf{G}}\|^2_{H^{s-4}}d\tau\\
			\lesssim & \mathcal{T}^{3-\sigma}\| (a_0,\mathbf{u}_0,\mathbf{B}_0)\|_{H^{s-4}}^2+E_3(t) +  (E_1(t)+E_3(t)+N_3(t))^{ 2}.\nonumber
		\end{align}
		
		Multiplying \eqref{est td hs-4} by a suitably small constant and adding it to \eqref{est tG hs-4}, 
		\begin{align}\label{est tdG hs-4}
			(\mathcal{T}+t)^{3-\sigma}&\|(\widetilde{d},\widetilde{\mathbf{G}}) \|^2_{H^{s-4}}+\int_{0}^t(\mathcal{T}+\tau)^{3-\sigma}\| (\widetilde{d},\nabla\widetilde{\mathbf{G}})\|^2_{H^{s-4}}d\tau
			\\
			\lesssim& \mathcal{T}^{3-\sigma}\| (a_0,\mathbf{u}_0,\mathbf{B}_0)\|_{H^{s}}^2+E_3(t)+   (E_1(t)+E_3(t)+N_3(t))^{ 2}.\nonumber
		\end{align}
		
		In the following,  we establish the estimate for $\|\partial_{x_3}\mathbf{B} \|_{H^{s-4}}$. Applying the Leray projector $\mathbb{P}$ to the  momentum equation for $\mathbf{u}$ and taking the $\dot{H}^{s-4}$  inner product against $\partial_{x_3}\mathbf{B}$, we have
		\begin{align*}
			(\mathcal{T}+\tau)^{3-\sigma}&\|\partial_{x_3}\mathbf{B} \|^2_{\dot{H}^{s-4}}\\
			= &\int_{\mathbb{T}^3}(\mathcal{T}+\tau)^{3-\sigma}\nabla^{s-4}\mathbb{P}(\rho\partial_t\mathbf{u}+\rho \mathbf{u}\cdot \nabla \mathbf{u}-\mu \Delta \mathbf{u}-\mathbf{B}\cdot \nabla \mathbf{B} )\cdot\nabla^{s-4}\partial_{x_3}\mathbf{B}   dx\\
			=&\frac{d}{d\tau}\int_{\mathbb{T}^3}(\mathcal{T}+\tau)^{3-\sigma}\nabla^{s-4}\mathbb{P}(\rho\mathbf{u})\cdot\nabla^{s-4}\partial_{x_3}\mathbf{B}   dx+\sum_{i=1}^6\mathcal{N}_i^*.
		\end{align*}
		where the residual terms are defined as
		\begin{align*}
			\mathcal{N}^*_1=&-(3-\sigma)\int_{\mathbb{T}^3}(\mathcal{T}+\tau)^{2-\sigma}\nabla^{s-4}\mathbb{P}(\rho\mathbf{u})\cdot\nabla^{s-4}\partial_{x_3}\mathbf{B}   dx,\\
			\mathcal{N}_2^*=& -\int_{\mathbb{T}^3}(\mathcal{T}+\tau)^{3-\sigma}\nabla^{s-4}\mathbb{P}(\partial_{\tau}\rho\mathbf{u})\cdot\nabla^{s-4}\partial_{x_3}\mathbf{B}   dx, \\
			\mathcal{N}_3^*=&\int_{\mathbb{T}^3}(\mathcal{T}+\tau)^{3-\sigma}\nabla^{s-4}\mathbb{P}\partial_{x_3}(\rho\mathbf{u})\cdot\nabla^{s-4}\partial_t\mathbf{B}   dx, \\
			\mathcal{N}_4^*=&\int_{\mathbb{T}^3}(\mathcal{T}+\tau)^{3-\sigma}\nabla^{s-4}\mathbb{P}( \rho \mathbf{u}\cdot \nabla \mathbf{u})\cdot\nabla^{s-4}\partial_{x_3}\mathbf{B}   dx, \\
			\mathcal{N}_5^*=&-\int_{\mathbb{T}^3}(\mathcal{T}+\tau)^{3-\sigma} \nabla^{s-4}\mathbb{P} \mu\Delta \mathbf{u}\cdot\nabla^{s-4}\partial_{x_3}\mathbf{B}   dx, \\
			\mathcal{N}_6^*=&-\int_{\mathbb{T}^3}(\mathcal{T}+\tau)^{3-\sigma}\nabla^{s-4}\mathbb{P}( \mathbf{B}\cdot \nabla \mathbf{B} )\cdot\nabla^{s-4}\partial_{x_3}\mathbf{B}   dx.
		\end{align*}
		
		To estimate the terms $\mathcal{N}_i^*$ ($i=1,\dots,6$), we proceed in a manner analogous to the higher-order case,  with the derivative reduced from $s-2$ to $s-4$ and the time weight adapted accordingly. 
		
		For the first term, 
		\begin{align*}
			\mathcal{N}_1^*=&-(3-\sigma) \int_{\mathbb{T}^3}(\mathcal{T}+\tau)^{2-\sigma}\nabla^{s-4}\mathbb{P}(\rho\mathbf{u})\cdot\nabla^{s-4}\partial_{x_3}\mathbf{B}   dx\\
			\lesssim & \mathcal{T}^{-1} (\mathcal{T}+\tau)^{3-\sigma}(1+\| a\|_{H^{s-4}})\|\nabla \mathbf{u} \|_{H^{s-3}}\|\partial_{x_3}\mathbf{B} \|_{H^{s-4}}.
		\end{align*}
		
		Substituting the continuity equation into $\mathcal{N}_2^*$, we deduce
		\begin{align*}
			\mathcal{N}_2^*=&-\int_{\mathbb{T}^3}(\mathcal{T}+\tau)^{3-\sigma}\nabla^{s-4}\mathbb{P}(\partial_{\tau}\rho\mathbf{u})\cdot\nabla^{s-4}\partial_{x_3}\mathbf{B}   dx\\
			=& \int_{\mathbb{T}^3}(\mathcal{T}+\tau)^{3-\sigma}\nabla^{s-4}\mathbb{P}\big((\nabla \cdot \mathbf{u}+\mathbf{u}\cdot \nabla a+a\operatorname{div}\mathbf{u}) \mathbf{u}\big)\cdot\nabla^{s-4}\partial_{x_3}\mathbf{B}   dx\\
			\lesssim & (\mathcal{T}+\tau)^{3-\sigma}(\|\nabla \mathbf{u} \|_{H^{s-4}}\| \mathbf{u}\|_{H^{s-4}}+\|  a\|_{H^{s-3}}\|\mathbf{u} \|^2_{H^{s-4}}+\| a\|_{H^{s-4}}\|\mathbf{u} \|_{H^{s-4}}\|\operatorname{div}\mathbf{u} \|_{H^{s-4}})\\
			&\qquad\qquad\times\|\partial_{x_3}\mathbf{B} \|_{H^{s-4}}\\
			\lesssim & (\mathcal{T}+\tau)^{3-\sigma}(1+\|(a,\mathbf{u}) \|_{H^{s-3}})\|\nabla \mathbf{u} \|_{H^{s-4}}^2\|\partial_{x_3}\mathbf{B} \|_{H^{s-4}}.
		\end{align*}
		
		Integrating by parts and substituting the equation for $\mathbf{B}$ yields the bound for $\mathcal{N}_3^*$:
		\begin{align*}
			\mathcal{N}_3^*=&\int_{\mathbb{T}^3}(\mathcal{T}+\tau)^{3-\sigma}\nabla^{s-4}\mathbb{P}\partial_{x_3}(\rho\mathbf{u})\cdot\nabla^{s-4}\partial_t\mathbf{B}   dx\\
			=&\int_{\mathbb{T}^3} (\mathcal{T}+\tau)^{3-\sigma}\nabla^{s-4}\mathbb{P}\partial_{x_3}(\rho\mathbf{u})\cdot\nabla^{s-4} (\mathbf{B}\cdot \nabla \mathbf{u}-\mathbf{u}\cdot \nabla \mathbf{B}-\mathbf{B}\operatorname{div}\mathbf{u}-\mathbf{e}_3\operatorname{div}\mathbf{u}+\partial_{x_3}\mathbf{u} )   dx\\
			\lesssim & (\mathcal{T}+\tau)^{3-\sigma}(1+\|a \|_{H^{s-3}})\|\nabla \mathbf{u} \|_{H^{s-4}}(\|\mathbf{B} \|_{H^{s-4}}\| \nabla \mathbf{u}\|_{H^{s-4}}+\| \mathbf{u}\|_{H^{s-4}}\| \mathbf{B}\|_{H^{s-3}}\\
			&+\| \mathbf{B}\|_{H^{s-4}}\|\operatorname{div}\mathbf{u} \|_{H^{s-4}}+\|\operatorname{div}\mathbf{u} \|_{H^{s-4}}+\|\nabla \mathbf{u} \|_{H^{s-4}} )\\
			\lesssim &  (\mathcal{T}+\tau)^{3-\sigma}(1+\|(a,\mathbf{B}) \|_{H^{s-3}} )\|\nabla \mathbf{u} \|_{H^{s-4}}^2.
		\end{align*}
		
		The bounds for the convective term $\mathcal{N}_4^*$ and the diffusion term $\mathcal{N}_5^*$ follow similar steps as their higher-order counterparts, yielding
		\begin{align*}
			\mathcal{N}_4^*=&\int_{\mathbb{T}^3}(\mathcal{T}+\tau)^{3-\sigma}\nabla^{s-4}\mathbb{P}( \rho \mathbf{u}\cdot \nabla \mathbf{u})\cdot\nabla^{s-4}\partial_{x_3}\mathbf{B}   dx\\
			\lesssim  &(\mathcal{T}+\tau)^{3-\sigma}(1+\|a \|_{H^{s-4}})\| \mathbf{u}\|_{H^{s-4}}\|\nabla \mathbf{u} \|_{H^{s-4}}\|\partial_{x_3}\mathbf{B} \|_{H^{s-4}},
		\end{align*}
		and
		\begin{align*}
			\mathcal{N}_5^*=&-\int_{\mathbb{T}^3}(\mathcal{T}+\tau)^{3-\sigma}\nabla^{s-4}\mathbb{P} \mu\Delta \mathbf{u}\cdot\nabla^{s-4}\partial_{x_3}\mathbf{B}   dx\\
			\lesssim&  (\mathcal{T}+\tau)^{3-\sigma}\|\nabla \mathbf{u} \|_{H^{s-3}}\|\partial_{x_3}\mathbf{B} \|_{H^{s-4}}.
		\end{align*}
		
		Finally, applying the same horizontal-vertical decomposition to the Lorentz force term gives
		\begin{align*}
			\mathcal{N}_6^*=&-\int_{\mathbb{T}^3}(\mathcal{T}+\tau)^{3-\sigma}\nabla^{s-4}\mathbb{P}( \mathbf{B}\cdot \nabla \mathbf{B} )\cdot\nabla^{s-4}\partial_{x_3}\mathbf{B}   dx \\
			=&-\int_{\mathbb{T}^3}(\mathcal{T}+\tau)^{3-\sigma}\nabla^{s-4}\mathbb{P}( \mathbf{B}_h\cdot \nabla_h \mathbf{B}+B_3\partial_{x_3}\mathbf{B} )\cdot\nabla^{s-4}\partial_{x_3}\mathbf{B}   dx \\
			\lesssim& (\mathcal{T}+\tau)^{3-\sigma}(\|\mathbf{B}_h \|_{H^{s-4}}\| \mathbf{B}\|_{H^{s-3}}+\|B_3 \|_{H^{s-4}}\| \partial_{x_3}\mathbf{B}\|_{H^{s-4}})\|\partial_{x_3}\mathbf{B} \|_{H^{s-4}}\\
			\lesssim & (\mathcal{T}+\tau)^{3-\sigma}\| \mathbf{B}\|_{H^{s-3}}\|\partial_{x_3}\mathbf{B} \|_{H^{s-4}}^2.
		\end{align*}
		
		Collecting the estimates for $\mathcal{N}_1^*$ through $\mathcal{N}_6^*$, absorbing the magnetic terms via Young's inequality, and integrating over time, we conclude that
		\begin{align*}
			\int_{0}^t(\mathcal{T}+\tau)^{3-\sigma}&\|\partial_{x_3}\mathbf{B} \|^2_{\dot{H}^{s-4}}d\tau	\lesssim \mathcal{T}^{3-\sigma}\| (a_0,\mathbf{u}_0,\mathbf{B}_0)\|_{H^{s}}^2+E_3(t)+  (E_1(t)+E_3(t)+N_3(t) )^{\frac32}.
		\end{align*}
		Combining with the bound  \eqref{est tdG hs-4}, we conclude the estimate for $N_3(t)$:
		\begin{align*}
			N_3(t)=&\sup_{\tau\in[0,t]}(\mathcal{T}+\tau)^{3-\sigma} \| (\widetilde{d},\widetilde{\mathbf{G}})\|^2_{H^{s-4}}+\int_{0}^{t}(\mathcal{T}+\tau)^{3-\sigma}\| (\nabla \widetilde{\mathbf{G}},\widetilde{d},\partial_{x_3}\mathbf{B})(\tau,\cdot)\|^2_{H^{s-4}}d\tau\\
			\lesssim&  \mathcal{T}^{3-\sigma}\| (a_0,\mathbf{u}_0,\mathbf{B}_0)\|_{H^{s}}^2+ E_3(t)+(E_1(t)+E_3(t)+N_3(t) )^{\frac32}.
		\end{align*}
		Together with \eqref{est bE3 s-3} and \eqref{est tE3 s-3}, this completes the proof.
	\end{proof}

	\vskip .2in 
	\section{Closure of the a priori estimates and proof of Theorem~\ref{thm1}}
	\label{sec:bootstrap-closure}

	This section combines these estimates and completes the bootstrap. In particular, interpolation supplies the omitted intermediate energies, and the large shift $\mathcal T$ absorbs the linear average couplings highlighted in the introduction. Throughout this section, $\sigma\in(0,1/2)$ is fixed. The parameter $\mathcal T\geq 1$ will be fixed below, depending only on $\sigma$, $\mu$, $\lambda$, the pressure law, and the constants in the preceding estimates. Once $\mathcal T$ is fixed, all constants are independent of $t$ and of the size of the initial data. Set
	\[
	\varepsilon_0:=\|(a_0,\mathbf u_0,\mathbf B_0)\|_{H^s}.
	\]
	
	We first recover the intermediate-order energies $E_2(t)$ and $N_2(t)$ via Sobolev interpolation and the Cauchy--Schwarz inequality:
	\begin{equation}\label{eq:E2N2-interpolation}
		E_2(t)\leq C E_1(t)^{1/2}E_3(t)^{1/2},
		\qquad
		N_2(t)\leq C N_1(t)^{1/2}N_3(t)^{1/2}.
	\end{equation}
	Consequently, the total energy is bounded by the remaining terms:
	\begin{equation}\label{eq:total-reduced-equivalence}
		E_{total}(t)\leq C( E_0(t)+E_1(t)+E_3(t)+N_1(t)+N_3(t)).
	\end{equation}
	
	Under the smallness assumption required in the preceding \textit{a priori} estimates, Lemmas~\ref{est E0}, \ref{est bE1}, \ref{est tE1}, and \ref{est N1}, together with the estimates for $E_3$ and $N_3$, imply
	\begin{align}
		E_0(t)
		&\leq C\mathcal{T}^{-\sigma}\varepsilon^2_0+CE_{total}(t)^{3/2},\label{eq:closure-E0}\\
		E_1(t)
		&\leq C\mathcal{T}^{1-\sigma}\varepsilon^2_0
		+C\mathcal T^{\frac{2\sigma-1}{2}}E_{total}(t)
		+C E_0(t)^{1/2}N_1(t)^{1/2}
		+CE_{total}(t)^{3/2},\label{eq:closure-E1}\\
		N_1(t)
		&\leq C\mathcal{T}^{1-\sigma}\varepsilon^2_0
		+C E_1(t)+CE_{total}(t)^{3/2},\label{eq:closure-N1}\\
		E_3(t)
		&\leq C\mathcal{T}^{3-\sigma}\varepsilon^2_0
		+C\mathcal T^{\frac{2\sigma-1}{2}}E_{total}(t)
		+C E_0(t)^{1/4}N_3(t)^{3/4}
		+CE_{total}(t)^{3/2},\label{eq:closure-E3}\\
		N_3(t)
		&\leq C\mathcal{T}^{3-\sigma}\varepsilon^2_0
		+C E_3(t)+CE_{total}(t)^{3/2}.\label{eq:closure-N3}
	\end{align}
	These inequalities cannot simply be summed, since $N_1(t)$ and $N_3(t)$ are bounded in terms of $E_1(t)$ and $E_3(t)$, which in turn involve $N_1(t)$ and $N_3(t)$. We therefore proceed as follows. Let $\eta\in(0,1)$ be a constant to be fixed, depending only on the constants $C$ appearing in \eqref{eq:closure-E0}--\eqref{eq:closure-N3}. Multiplying \eqref{eq:closure-N1} by $\eta$ and adding the result to \eqref{eq:closure-E1}, the term $C\eta E_1(t)$ produced on the right-hand side is absorbed by the term $E_1(t)$ on the left, provided $\eta\leq 1/(2C)$. The mixed term is handled by Young's inequality,
	\begin{align*}
		C E_0(t)^{1/2}N_1(t)^{1/2}\leq \frac{\eta}{2}N_1(t)+\frac{C^2}{2\eta}E_0(t),
	\end{align*}
	and the resulting multiple of $E_0(t)$ is estimated by \eqref{eq:closure-E0}, which contributes $C\eta^{-1}\mathcal{T}^{-\sigma}\varepsilon_0^2$ together with a cubic remainder. The pair \eqref{eq:closure-E3}, \eqref{eq:closure-N3} is treated in exactly the same way, using $CE_0(t)^{1/4}N_3(t)^{3/4}\leq \frac{\eta}{2}N_3(t)+C_\eta E_0(t)$. Finally, the terms $C\mathcal{T}^{(2\sigma-1)/2}E_{total}(t)$ in \eqref{eq:closure-E1} and \eqref{eq:closure-E3} are absorbed into the left-hand side by choosing $\mathcal{T}$ so large that $C\mathcal{T}^{(2\sigma-1)/2}\leq 1/4$, which is possible because $\sigma<1/2$. The order of the choices is thus: first $\eta$, in terms of the constants $C$; then $\mathcal{T}$, in terms of $\eta$, $\sigma$, $\mu$ and $\lambda$; and only then $\varepsilon_0$, in terms of $\mathcal{T}$.
	Combining these steps with \eqref{eq:total-reduced-equivalence}, we deduce that
	\begin{equation}\label{eq:closed-Etotal}
		E_{total}(t)
		\leq C\mathcal{T}^{3-\sigma}\varepsilon^2_0
		+CE_{total}(t)^{3/2}.
	\end{equation}
	A standard bootstrap argument then yields the global uniform bound for $E_{total}(t)$. The desired decay rates follow naturally from the definition of the time-weighted energy functional $E_{total}(t)$ and the equivalence of $\mathcal T+t$ and $1+t$ for the fixed $\mathcal T$.
	This completes the proof of Theorem~\ref{thm1}.
	
	\section*{Acknowledgment}
	The work of L.-A. Li was supported by the National Natural Science Foundation of China (No. 12501295), Beijing Natural Science Foundation (No. 1254045) and the Fundamental Research Funds for the Central Universities (No. 2243100008). J. Wu was partially supported by the National Science Foundation of the United States under Grants DMS-2104682 and DMS-2309748. X. Xu was partially supported by the National Natural Science Foundation of China (grants 12571244, 12171040) and the National Key R\&D Program of China (grant 2020YFA0712900).

	\bibliographystyle{abbrv}
	\bibliography{3DcompMHD.bib}
\end{document}